\documentclass[11pt,twoside,a4paper]{article}
\usepackage{color,amscd,amsmath,amssymb,amsthm,latexsym,stmaryrd,authblk,mathabx,shuffle,hyperref,tikz,tkz-tab,wasysym} 
\usepackage[latin1]{inputenc}
\usepackage[T1]{fontenc} 
\usepackage[english]{babel}
\setlength{\textwidth}{16cm}
\setlength{\textheight}{25cm}
\topmargin = -25mm
\oddsidemargin = -1mm
\evensidemargin = 0mm
\providecommand{\keywords}[1]{\textbf{\textit{Keywords.}} #1}
\providecommand{\AMSclass}[1]{\textbf{\textit{AMS classification.}} #1}

\newcommand{\rond}[1]{*++[o][F-]{#1}}

\newcommand{\calT}{\mathcal{T}}
\newcommand{\calD}{\mathcal{D}}
\newcommand{\g}{\mathfrak{g}}
\newcommand{\K}{\mathbb{K}}

\newcommand{\adm}{\mathrm{Adm}}

\newcommand{\prelie}{\mathbf{PreLie}}
\newcommand{\free}{\mathbf{F}}
\newcommand{\perm}{\mathbf{Perm}}
\newcommand{\NAP}{\mathbf{NAP}}
\newcommand{\im}{\mathrm{Im}}
\newcommand{\vect}{\mathrm{Vect}}
\renewcommand{\ker}{\mathrm{Ker}}

\newcommand{\eas}{\mathbf{EAS}}
\newcommand{\leas}{\ell\mathbf{EAS}}
\newcommand{\id}{\mathrm{Id}}
\newcommand{\indic}{\mathbf{1}}
\newcommand{\bfT}{\mathcal{T}}
\newcommand{\calP}{\mathcal{P}}

\input{xy}
\xyoption{all}



\newcommand{\bfdeux}[2]{
\begin{tikzpicture}[line cap=round,line join=round,>=triangle 45,x=0.2cm,y=0.2cm]
\clip(-1.2,0.) rectangle (1.2,3.2);
\draw [line width=.5pt] (0.,0.)-- (0.,1.);
\draw [line width=.5pt] (0.,1.)-- (-1.,2.);
\draw [line width=.5pt] (0.,1.)-- (1.,2.);
\draw(-1.,2.5) node {\tiny #1};
\draw(1.,2.5) node {\tiny #2};
\end{tikzpicture}}

\newcommand{\bftroisun}[3]{
\begin{tikzpicture}[line cap=round,line join=round,>=triangle 45,x=0.2cm,y=0.2cm]
\clip(-2.2,0.) rectangle (2.2,4.2);
\draw [line width=.5pt] (0.,0.)-- (0.,1.);
\draw [line width=.5pt] (0.,1.)-- (-1.,2.);
\draw [line width=.5pt] (0.,1.)-- (2.,3.);
\draw [line width=.5pt] (-1.,2.)-- (0.,3.);
\draw [line width=.5pt] (-1.,2.)-- (-2.,3.);
\draw(-2.,3.5) node {\tiny #1};
\draw(0.,3.5) node {\tiny #2};
\draw(2.,3.5) node {\tiny #3};
\end{tikzpicture}}
\newcommand{\bftroisdeux}[3]{
\begin{tikzpicture}[line cap=round,line join=round,>=triangle 45,x=0.2cm,y=0.2cm]
\clip(-2.2,0.) rectangle (2.2,4.2);
\draw [line width=.5pt] (0.,0.)-- (0.,1.);
\draw [line width=.5pt] (0.,1.)-- (-2.,3.);
\draw [line width=.5pt] (0.,1.)-- (1.,2.);
\draw [line width=.5pt] (1.,2.)-- (2.,3.);
\draw [line width=.5pt] (1.,2.)-- (0.,3.);
\draw(-2.,3.5) node {\tiny #1};
\draw(0.,3.5) node {\tiny #2};
\draw(2.,3.5) node {\tiny #3};
\end{tikzpicture}}


\newcommand{\tun}{\begin{tikzpicture}[line cap=round,line join=round,>=triangle 45,x=0.5cm,y=0.5cm]
\clip(-0.2,-0.1) rectangle (0.2,0.2);
\begin{scriptsize}
\draw [fill=black] (0.,0.) circle (1pt);
\end{scriptsize}
\end{tikzpicture}}

\newcommand{\tdeux}{\begin{tikzpicture}[line cap=round,line join=round,>=triangle 45,x=0.5cm,y=0.5cm]
\clip(-.2,-.1) rectangle (0.2,0.7);
\draw [line width=.5pt] (0.,0.5)-- (0.,0.);
\begin{scriptsize}
\draw [fill=black] (0.,0.) circle (1pt);
\draw [fill=black] (0.,0.5) circle (1pt);
\end{scriptsize}
\end{tikzpicture}}

\newcommand{\ttroisun}{\begin{tikzpicture}[line cap=round,line join=round,>=triangle 45,x=0.5cm,y=0.5cm]
\clip(-0.5,-0.1) rectangle (0.5,0.7);
\draw [line width=0.5pt] (0.,0.)-- (-0.3,0.5);
\draw [line width=0.5pt] (0.,0.)-- (0.3,0.5);
\begin{scriptsize}
\draw [fill=black] (-0.3,0.5) circle (1pt);
\draw [fill=black] (0.,0.) circle (1pt);
\draw [fill=black] (0.3,0.5) circle (1pt);
\end{scriptsize}
\end{tikzpicture}}
\newcommand{\ttroisdeux}{\begin{tikzpicture}[line cap=round,line join=round,>=triangle 45,x=0.5cm,y=0.5cm]
\clip(-.2,-.1) rectangle (0.2,1.2);
\draw [line width=0.5pt] (0.,0.5)-- (0.,0.);
\draw [line width=0.5pt] (0.,0.5)-- (0.,1.);
\begin{scriptsize}
\draw [fill=black] (0.,0.) circle (1pt);
\draw [fill=black] (0.,0.5) circle (1pt);
\draw [fill=black] (0.,1.) circle (1pt);
\end{scriptsize}
\end{tikzpicture}}

\newcommand{\tquatreun}{\begin{tikzpicture}[line cap=round,line join=round,>=triangle 45,x=0.5cm,y=0.5cm]
\clip(-0.5,-0.1) rectangle (0.5,0.7);
\draw [line width=0.5pt] (0.,0.)-- (-0.3,0.5);
\draw [line width=0.5pt] (0.,0.)-- (0.3,0.5);
\draw [line width=0.5pt] (0.,0.)-- (0.,0.5);
\begin{scriptsize}
\draw [fill=black] (-0.3,0.5) circle (1.0pt);
\draw [fill=black] (0.,0.) circle (1.0pt);
\draw [fill=black] (0.3,0.5) circle (1.0pt);
\draw [fill=black] (0.,0.5) circle (1.0pt);
\end{scriptsize}
\end{tikzpicture}}
\newcommand{\tquatredeux}{\begin{tikzpicture}[line cap=round,line join=round,>=triangle 45,x=0.5cm,y=0.5cm]
\clip(-0.5,-0.1) rectangle (0.5,1.2);
\draw [line width=0.5pt] (0.,0.)-- (-0.3,0.5);
\draw [line width=0.5pt] (0.,0.)-- (0.3,0.5);
\draw [line width=0.5pt] (-0.3,0.5)-- (-0.3,1.);
\begin{scriptsize}
\draw [fill=black] (-0.3,0.5) circle (1.0pt);
\draw [fill=black] (0.,0.) circle (1.0pt);
\draw [fill=black] (0.3,0.5) circle (1.0pt);
\draw [fill=black] (-0.3,1.) circle (1.0pt);
\end{scriptsize}
\end{tikzpicture}}

\newcommand{\tquatrequatre}{\begin{tikzpicture}[line cap=round,line join=round,>=triangle 45,x=0.5cm,y=0.5cm]
\clip(-0.5,-0.1) rectangle (0.5,1.7);
\draw [line width=0.5pt] (0.,0.)-- (0.,0.5);
\draw [line width=0.5pt] (0.,0.5)-- (0.3,1.);
\draw [line width=0.5pt] (0.,0.5)-- (-0.3,1.);
\begin{scriptsize}
\draw [fill=black] (0.,0.) circle (1.0pt);
\draw [fill=black] (0.,0.5) circle (1.0pt);
\draw [fill=black] (-0.3,1.) circle (1.0pt);
\draw [fill=black] (0.3,1.) circle (1.0pt);
\end{scriptsize}
\end{tikzpicture}}
\newcommand{\tquatrecinq}{\begin{tikzpicture}[line cap=round,line join=round,>=triangle 45,x=0.5cm,y=0.5cm]
\clip(-.2,-.1) rectangle (0.5,1.7);
\draw [line width=0.5pt] (0.,0.)-- (0.,0.5);
\draw [line width=0.5pt] (0.,0.5)-- (0.,1.);
\draw [line width=0.5pt] (0.,1.)-- (0.,1.5);
\begin{scriptsize}
\draw [fill=black] (0.,0.) circle (1.0pt);
\draw [fill=black] (0.,0.5) circle (1.0pt);
\draw [fill=black] (0.,1.) circle (1.0pt);
\draw [fill=black] (0.,1.5) circle (1.0pt);
\end{scriptsize}
\end{tikzpicture}}

\newcommand{\tcinqun}{\begin{tikzpicture}[line cap=round,line join=round,>=triangle 45,x=0.5cm,y=0.5cm]
\clip(-0.7,-0.1) rectangle (0.8,0.7);
\draw [line width=0.5pt] (0.,0.)-- (-0.5,0.5);
\draw [line width=0.5pt] (0.,0.)-- (-0.2,0.5);
\draw [line width=0.5pt] (0.,0.)-- (0.2,0.5);
\draw [line width=0.5pt] (0.,0.)-- (0.5,0.5);
\begin{scriptsize}
\draw [fill=black] (-0.5,0.5) circle (1.0pt);
\draw [fill=black] (-0.2,0.5) circle (1.0pt);
\draw [fill=black] (0.2,0.5) circle (1.0pt);
\draw [fill=black] (0.5,0.5) circle (1.0pt);
\draw [fill=black] (0.,0.) circle (1.0pt);
\end{scriptsize}
\end{tikzpicture}}
\newcommand{\tcinqdeux}{\begin{tikzpicture}[line cap=round,line join=round,>=triangle 45,x=0.5cm,y=0.5cm]
\clip(-0.5,-0.1) rectangle (0.5,1.2);
\draw [line width=0.5pt] (0.,0.)-- (-0.3,0.5);
\draw [line width=0.5pt] (0.,0.)-- (0.3,0.5);
\draw [line width=0.5pt] (0.,0.)-- (0.,0.5);
\draw [line width=0.5pt] (-0.3,0.5)-- (-0.3,1.);
\begin{scriptsize}
\draw [fill=black] (-0.3,0.5) circle (1.0pt);
\draw [fill=black] (0.,0.) circle (1.0pt);
\draw [fill=black] (0.,0.5) circle (1.0pt);
\draw [fill=black] (0.3,0.5) circle (1.0pt);
\draw [fill=black] (-0.3,1.) circle (1.0pt);
\end{scriptsize}
\end{tikzpicture}}

\newcommand{\tcinqcinq}{\begin{tikzpicture}[line cap=round,line join=round,>=triangle 45,x=0.5cm,y=0.5cm]
\clip(-0.5,-0.1) rectangle (0.5,1.2);
\draw [line width=0.5pt] (0.,0.)-- (-0.3,0.5);
\draw [line width=0.5pt] (-0.3,0.5)-- (-0.3,1.);
\draw [line width=0.5pt] (0.,0.)-- (0.3,0.5);
\draw [line width=0.5pt] (0.3,0.5)-- (0.3,1.);
\begin{scriptsize}
\draw [fill=black] (-0.3,0.5) circle (1.0pt);
\draw [fill=black] (0.,0.) circle (1.0pt);
\draw [fill=black] (-0.3,1.) circle (1.0pt);
\draw [fill=black] (0.3,0.5) circle (1.0pt);
\draw [fill=black] (0.3,1.) circle (1.0pt);
\end{scriptsize}
\end{tikzpicture}}
\newcommand{\tcinqsix}{\begin{tikzpicture}[line cap=round,line join=round,>=triangle 45,x=0.5cm,y=0.5cm]
\clip(-0.7,-0.1) rectangle (0.5,1.2);
\draw [line width=0.5pt] (0.,0.)-- (-0.3,0.5);
\draw [line width=0.5pt] (0.,0.)-- (0.3,0.5);
\draw [line width=0.5pt] (-0.3,0.5)-- (-0.6,1.);
\draw [line width=0.5pt] (-0.3,0.5)-- (0.,1.);
\begin{scriptsize}
\draw [fill=black] (-0.3,0.5) circle (1.0pt);
\draw [fill=black] (0.,0.) circle (1.0pt);
\draw [fill=black] (0.3,0.5) circle (1.0pt);
\draw [fill=black] (-0.6,1.) circle (1.0pt);
\draw [fill=black] (0.,1.) circle (1.0pt);
\end{scriptsize}
\end{tikzpicture}}

\newcommand{\tcinqhuit}{\begin{tikzpicture}[line cap=round,line join=round,>=triangle 45,x=0.5cm,y=0.5cm]
\clip(-0.5,-0.1) rectangle (0.5,1.7);
\draw [line width=0.5pt] (0.,0.)-- (-0.3,0.5);
\draw [line width=0.5pt] (-0.3,0.5)-- (-0.3,1.);
\draw [line width=0.5pt] (0.,0.)-- (0.3,0.5);
\draw [line width=0.5pt] (-0.3,1.)-- (-0.3,1.5);
\begin{scriptsize}
\draw [fill=black] (-0.3,0.5) circle (1.0pt);
\draw [fill=black] (0.,0.) circle (1.0pt);
\draw [fill=black] (-0.3,1.) circle (1.0pt);
\draw [fill=black] (0.3,0.5) circle (1.0pt);
\draw [fill=black] (-0.3,1.5) circle (1.0pt);
\end{scriptsize}
\end{tikzpicture}}

\newcommand{\tcinqdix}{\begin{tikzpicture}[line cap=round,line join=round,>=triangle 45,x=0.5cm,y=0.5cm]
\clip(-0.5,-0.1) rectangle (0.5,1.7);
\draw [line width=0.5pt] (0.,0.)-- (0.,0.5);
\draw [line width=0.5pt] (0.,0.5)-- (0.3,1.);
\draw [line width=0.5pt] (0.,0.5)-- (-0.3,1.);
\draw [line width=0.5pt] (0.,0.5)-- (0.,1.);
\begin{scriptsize}
\draw [fill=black] (0.,0.) circle (1.0pt);
\draw [fill=black] (0.,0.5) circle (1.0pt);
\draw [fill=black] (-0.3,1.) circle (1.0pt);
\draw [fill=black] (0.3,1.) circle (1.0pt);
\draw [fill=black] (0.,1.) circle (1.0pt);
\end{scriptsize}
\end{tikzpicture}}
\newcommand{\tcinqonze}{\begin{tikzpicture}[line cap=round,line join=round,>=triangle 45,x=0.5cm,y=0.5cm]
\clip(-0.5,-0.1) rectangle (0.5,1.7);
\draw [line width=0.5pt] (0.,0.)-- (0.,0.5);
\draw [line width=0.5pt] (0.,0.5)-- (0.3,1.);
\draw [line width=0.5pt] (0.,0.5)-- (-0.3,1.);
\draw [line width=0.5pt] (-0.3,1.)-- (-0.3,1.5);
\begin{scriptsize}
\draw [fill=black] (0.,0.) circle (1.0pt);
\draw [fill=black] (0.,0.5) circle (1.0pt);
\draw [fill=black] (-0.3,1.) circle (1.0pt);
\draw [fill=black] (0.3,1.) circle (1.0pt);
\draw [fill=black] (-0.3,1.5) circle (1.0pt);
\end{scriptsize}
\end{tikzpicture}}
\newcommand{\tcinqdouze}{\begin{tikzpicture}[line cap=round,line join=round,>=triangle 45,x=0.5cm,y=0.5cm]
\clip(-0.5,-0.1) rectangle (0.5,1.7);
\draw [line width=0.5pt] (0.,0.)-- (0.,0.5);
\draw [line width=0.5pt] (0.,0.5)-- (0.3,1.);
\draw [line width=0.5pt] (0.,0.5)-- (-0.3,1.);
\draw [line width=0.5pt] (0.3,1.)-- (0.3,1.5);
\begin{scriptsize}
\draw [fill=black] (0.,0.) circle (1.0pt);
\draw [fill=black] (0.,0.5) circle (1.0pt);
\draw [fill=black] (-0.3,1.) circle (1.0pt);
\draw [fill=black] (0.3,1.) circle (1.0pt);
\draw [fill=black] (0.3,1.5) circle (1.0pt);
\end{scriptsize}
\end{tikzpicture}}

\newcommand{\tcinqquatorze}{\begin{tikzpicture}[line cap=round,line join=round,>=triangle 45,x=0.5cm,y=0.5cm]
\clip(-.2,-.1) rectangle (0.5,2.2);
\draw [line width=0.5pt] (0.,0.)-- (0.,0.5);
\draw [line width=0.5pt] (0.,0.5)-- (0.,1.);
\draw [line width=0.5pt] (0.,1.)-- (0.,1.5);
\draw [line width=0.5pt] (0.,1.5)-- (0.,2.);
\begin{scriptsize}
\draw [fill=black] (0.,0.) circle (1.0pt);
\draw [fill=black] (0.,0.5) circle (1.0pt);
\draw [fill=black] (0.,1.) circle (1.0pt);
\draw [fill=black] (0.,1.5) circle (1.0pt);
\draw [fill=black] (0.,2.) circle (1.0pt);
\end{scriptsize}
\end{tikzpicture}}


\title{Generalized prelie and permutative algebras}
\date{}
\author{Lo\"\i c Foissy}
\affil{\small{Univ. Littoral Côte d'Opale, UR 2597
LMPA, Laboratoire de Mathématiques Pures et Appliquées Joseph Liouville
F-62100 Calais, France}.\\ Email:\texttt{foissy@univ-littoral.fr}}

\theoremstyle{plain}
\newtheorem{theo}{Theorem}[section]
\newtheorem{lemma}[theo]{Lemma}
\newtheorem{cor}[theo]{Corollary}
\newtheorem{prop}[theo]{Proposition}
\newtheorem{defi}[theo]{Definition}

\theoremstyle{remark}
\newtheorem{remark}{Remark}[section]
\newtheorem{notation}{Notations}[section]
\newtheorem{example}{Example}[section]

\begin{document}

\maketitle

\begin{abstract}
We study generalizations of pre-Lie algebras, where the free objects are based on rooted trees which edges are typed, instead of usual rooted trees, and with generalized pre-Lie products formed by graftings. 
Working with a discrete set of types, we show how to obtain such objects when this set is given an associative commutative product and a second product
making it a commutative extended semigroup. Working with a vector space of types, these two products are replaced by a bilinear map $\Phi$ which satisfies a braid equation and a commutation relation. 
 Examples of such structures are defined on sets, semigroups, or groups.
 
 These constructions define a family of operads $\mathbf{PreLie}_\Phi$ which generalize the operad of pre-Lie algebras $\mathbf{PreLie}$.
 For any embedding from $\mathbf{PreLie}$ into $\mathbf{PreLie}_\phi$, we construct a family of pairs of cointeracting bialgebras, based on typed and decorated trees: 
the first coproduct is given by an extraction and contraction process, the types being modified by the action of $\Phi$; the second coproduct is given by admissible cuts, in the Connes and Kreimer's way, with again types modified by the action of $\Phi$. 
 
 We also study the Koszul dual of $\mathbf{PreLie}_\Phi$, which gives generalizations of permutative algebras.
\end{abstract}

\keywords{Pre-Lie algebras; permutative algebras; typed rooted trees; interacting bialgebras}\\

\AMSclass{18M60, 16T30, 05C05, 20M75, 16T05}

\tableofcontents

\section*{Introduction}

\:\:\:\: Recently, numerous parameterization of well-known operads were introduced. Choosing a set $\Omega$ of parameters, any product defining the considered operad is replaced by a bunch of products indexed by $\Omega$,
and various relations are defined on them, mimicking the relations defining the initial operads.
One can first require that any linear span of the parameterized products also satisfy the relations of the initial operads this is the\emph{matching} parameterization. For example, matching Rota-Baxter algebras,
associative, dendriform, pre-Lie algebras are introduced in\cite{Zhang2020,Foissy47}.
Another way is the use of one or more semigroup structures on $\Omega$: this it the\emph{family} parameterization. For example, family Rota-Baxter algebras, dendriform, pre-Lie algebras are introduced and studied in\cite{Zhang2019,Zhang2020-3,Manchon2020}. 
A way to obtain both these parametrizations for dendriform algebras is introduced in\cite{Foissy58}, with the help of a generalization of diassociative semigroups, namely extended diassociative semigroups (EDS),
and a two-parameters version for dendriform algebras and pre-Lie algebras is described in\cite{Foissy50}. The same is done for associative algebras in\cite{Foissy56} and for tridendriform algebras in\cite{Foissy57}.\\

In this paper, we extend the parametrizations of pre-Lie algebras to a more general setting, and study the pairs of cointeracting bialgebras on decorated and typed rooted trees which result of these constructions. 
We start with a discrete version of these parametrizations. 
The set of parameters $\Omega$ is here given an associative product $\rightarrow$ and another (maybe non-associative) product $\triangleright$, satisfying the four axioms of commutative extended diassociative semigroup
(briefly, CEDS), see Definition\ref{defi1.1}. An $\Omega$-pre-Lie algebra is a pair $(V,(\circ_\alpha)_{\alpha\in\Omega})$, where $V$ is a vector space and for any $\alpha\in\Omega$, $\circ_\alpha:V\otimes V\longrightarrow V$
such that, for any $x,y,z\in V$, for any $\alpha,\beta\in\Omega$,
\begin{align*}
x\circ_\alpha (y\circ_\beta z)- (x\circ_{\alpha\triangleright\beta} y)\circ_{\alpha\rightarrow\beta}z&=y\circ_\beta (x\circ_\alpha z)- (y\circ_{\beta\triangleright\alpha} x)\circ_{\beta\rightarrow\alpha}z.
\end{align*}
In the particular case where $\Omega$ is reduced to a singleton, these are classical (left) pre-Lie algebras. There are other interesting examples:
\begin{itemize}
\item For any set $\Omega$, define the two products $\rightarrow,\triangleright$ by
\[\alpha\rightarrow\beta=\beta\triangleright\alpha=\beta.\]
Then $(\Omega,\rightarrow,\triangleright)$ is a CEDS, and $\Omega$-pre-Lie algebras are matching pre-Lie algebras of\cite{Zhang2020}.
\item Let $(\Omega,\rightarrow)$ be a commutative semigroup. Define the product $\triangleright$ by
\[\alpha\triangleright\beta=\alpha.\]
Then $(\Omega,\rightarrow,\triangleright)$ is a CEDS, and $\Omega$-pre-Lie algebras are family pre-Lie algebras of\cite{Manchon2020}.
\item Let $(\Omega,\star)$ be a group. Define the two products $\rightarrow,\triangleright$ by
\begin{align*}
\alpha\rightarrow\beta&=\beta,&\alpha\triangleright\beta&=\alpha\star\beta^{\star-1}.
\end{align*}
Then $(\Omega,\rightarrow,\triangleright)$ is a CEDS. The associated $\Omega$- pre-Lie algebras do not seem to appear in the literature. 
\end{itemize}
In order to be more general, we turn to a linearized version of CEDS, which is based on the following observation (Lemma\ref{lem1.4}): if $\Omega$ is a set with two operations $\rightarrow$ and $\triangleright$, then we can consider the maps
\begin{align*}
\phi&:\left\{\begin{array}{rcl}
\Omega^2&\longrightarrow&\Omega^2\\
(\alpha,\beta)&\longrightarrow&(\alpha\rightarrow\beta,\alpha\triangleright\beta),
\end{array}\right.&
\tau&:\left\{\begin{array}{rcl}
\Omega^2&\longrightarrow&\Omega^2\\
(\alpha,\beta)&\longrightarrow&(\beta,\alpha).
\end{array}\right.
\end{align*}
Then $(\Omega,\rightarrow,\triangleright)$ is a CEDS if, and only if
\begin{align*}
(\id\times\phi)\circ (\phi\times\id)\circ (\id\times\phi)&=(\phi\times\id)\circ (\id\times\tau)\circ (\phi\times\id),\\
(\id\times\phi)\circ (\id\times\tau)\circ (\tau\times\id)\circ (\phi\times\id)&=(\tau\times\id)\circ (\phi\times\id)\circ (\id\times\phi)\circ (\id\times\tau).
\end{align*}
The first equation is called the braid equation, the second one the commutation relation. 
This observation leads to the definition of linear CEDS, which are pairs $(A,\Phi)$, where $A$ is a vector space and $\Phi:A\otimes A\longrightarrow A\otimes A$, such that
\begin{align*}
(\id\otimes\Phi)\circ (\Phi\otimes\id)\circ (\id\otimes\Phi)&=(\Phi\otimes\id)\circ (\id\otimes\tau)\circ (\Phi\otimes\id),\\
(\id\otimes\Phi)\circ (\id\otimes\tau)\circ (\tau\otimes\id)\circ (\Phi\otimes\id)&=(\tau\otimes\id)\circ (\Phi\otimes\id)\circ (\id\otimes\Phi)\circ (\id\otimes\tau),
\end{align*}
where $\tau:A\otimes A\longrightarrow A\otimes A$ is the usual flip (Definition\ref{defi1.8}). 
In particular, if $\Omega$ is a CEDS, then the vector space $\K\Omega$ generated by $\Omega$ is a linear CEDS; there are far more examples of linear CEDS than the linearizations of CEDS.

To any linear CEDS $(A,\Phi)$ is associated a category of $\Phi$-pre-Lie algebras (Definition\ref{defi2.1}).
We prove in the second section of this paper that $\Phi$-pre-Lie algebras can be described in terms of trees with graftings if, and only if, $(A,\Phi)$ is linear CEDS (Theorem\ref{theo2.2}). 
The trees used here are rooted, decorated (that is to say to any vertex is attached a decoration, taken in a set $\calD$ of generators), and typed (that is to say to any edge is attached a type, taken in the underlying space $A$ of the CEDS). 
In each case, the products of two trees is a sum over all the graftings of the first tree to a vertex of the second one, where the CEDS structure is used to modify the types of the edges in the result of the grafting,
see Proposition\ref{prop2.6} for a more precise description of this process. For example, in the three examples of CEDS described earlier, we obtain
\begin{align*}
&\xymatrix{\rond{X}}\circ_a\substack{\hspace{3mm}\\ \xymatrix{\rond{Z}\ar@{-}[d]^b\\
\rond{Y}}}&=\substack{\hspace{3mm}\\ \xymatrix{\rond{X}\ar@{-}[rd]_a&\rond{Z}\ar@{-}[d]^b\\
&\rond{Y}}}+\substack{\hspace{5mm}\\ \xymatrix{\rond{X}\ar@{-}[d]^a\\
\rond{Z}\ar@{-}[d]^b\\
\rond{Y}}},&
&\xymatrix{\rond{X}}\circ_a\substack{\hspace{3mm}\\ \xymatrix{\rond{Z}\ar@{-}[d]^b\\
\rond{Y}}}&=\substack{\hspace{3mm}\\ \xymatrix{\rond{X}\ar@{-}[rd]_a&\rond{Z}\ar@{-}[d]^b\\
&\rond{Y}}}+\substack{\hspace{5mm}\\ \xymatrix{\rond{X}\ar@{-}[d]^a\\
\rond{Z}\ar@{-}[d]^{a\rightarrow b}\\
\rond{Y}}},\\
&\xymatrix{\rond{X}}\circ_a\substack{\hspace{3mm}\\ \xymatrix{\rond{Z}\ar@{-}[d]^b\\
\rond{Y}}}&=\substack{\hspace{3mm}\\ \xymatrix{\rond{X}\ar@{-}[rd]_a&\rond{Z}\ar@{-}[d]^b\\
&\rond{Y}}}+\substack{\hspace{5mm}\\ \xymatrix{\rond{X}\ar@{-}[d]^{a\star b^{\star -1}}\\
\rond{Z}\ar@{-}[d]^b\\
\rond{Y}}}.
\end{align*}
This combinatorial description of free $\Phi$-pre-Lie algebras induces a description of the operad $\prelie_\Phi$ of $\Phi$-pre-Lie algebras in terms of rooted trees with insertion into vertices, 
which generalizes the description of the operad $\prelie$ of pre-Lie algebras of\cite{Chapoton2001}.
The second section ends with the study of operadic morphisms from $\prelie$ to $\prelie_\Phi$, or equivalently to the study of pre-Lie products in any $\Phi$-pre-Lie algebras. We prove that these products
are in one-to-one correspondence with vectors $a\in A$, such that $\Phi(a)=a\otimes a$ (Proposition\ref{prop2.9}).
These vectors will be called special vectors of eigenvalue $1$. There are also special vectors of eigenvalue $0$, which give rise to morphism from the operad of non-associative permutative algebras\cite{Livernet2006}.
We prove that for any such nonzero pre-Lie product, the free $\Phi$-pre-Lie algebras are also free pre-Lie algebras and we give an explicit set of generators in Proposition\ref{prop2.10}, with the help of a convenient order on trees, in the spirit of\cite{Li2021}.\\

The third section is devoted to the study of the Koszul dual of the operad $\prelie_\Phi$. In the non-parameterized case, the Koszul dual of the operad $\prelie$ is the operad $\perm$ of permutative algebras (proposition\ref{prop3.2}): 
we naturally obtain a parameterization of this operad, by objects which are dual linear CEDS (Definition\ref{defi3.1}). 
Free $\Phi$-permutative algebras are combinatorially described in terms of monomials (Proposition\ref{prop3.2}).
Looking for morphisms from the operad $\perm$ to the operad $\perm_\Phi$ leads to special vectors of eigenvalue 1 and to weak special vectors (Proposition\ref{prop3.6}).
Note that we did not find any weak special vector, and that the only results we obtained are theorems of nonexistence in some particular cases (Proposition\ref{prop3.7}).\\

We construct some pairs of cointeracting bialgebras in the last section. These objects are pairs of bialgebras $(A,m,\delta)$ and $(A,m,\Delta)$ sharing the same algebraic background, such that $(A,m,\Delta)$
is a bialgebra in the category of right comodules over $(A,m,\delta)$. 
One of the first examples of such an object is based on trees, the first coproduct being given by an extraction-contraction process of edges, the second one being the Connes-Kreimer's one, given by admissible cuts\cite{Calaque2011}. 
Other examples are based on graphs\cite{Manchon2012,Foissy36}, posets and finite topologies\cite{Foissy37}$\ldots$ An important example of cointeracting bialgebra based on decorated and typed trees is used in\cite{Bruned2019,Bruned2015} 
in order to study stochastic PDEs, in a more general context (the tensor products need a completion there). We construct a way to obtain such a pair in an operadic context in\cite{Foissy55}: 
if $\calP$ is an operad, the the symmetric algebra generated by the invariants of the dual of $\calP$ is a bialgebra $(A,m,\delta)$, where $\delta$ is obtained by dualizing the composition of the operad.
For any operadic morphism $\phi:\prelie\longrightarrow\calP$, one can define a second coproduct $\Delta$ on $A$, making it a bialgebra $(A,m,\Delta)$, cointeracting with $(A,m,\delta)$: the coproduct $\Delta$ is obtained by dualizing the pre-Lie product induced by $\phi$. 
In our context, if $(A,\Phi)$ is a finite-dimensional CEDS and $a\in A$ is a special vector of $\Phi$ of eigenvalue 1, we obtain a pair of cointeracting bialgebras on $A$-typed trees; 
this can be generalized to $\calD$-decorated and $A$-typed trees, under the condition that $\calD$ is given a structure of commutative semigroup. We describe these to coproducts in terms of extraction-contraction and admissible cuts, 
where the types are modified according to the maps $\Phi$.
We end this paper by a more explicit description of this structure when the considered map $\Phi$ comes from one of the three examples described earlier.\\

\textbf{Acknowledgments}. 
The author acknowledges support from the grant ANR-20-CE40-0007
\emph{Combinatoire Algébrique, Résurgence, Probabilités Libres et Opérades}.

\textbf{Thanks}. The author thanks the anonymous referee for his useful comments and remarks.
\\

\begin{notation}
$\K$ is a commutative field of characteristic zero. All the vector spaces in this text will be taken over $\K$.
\end{notation}

\section{Extended (di)associative semigroups}

\subsection{Commutative extended diassociative semigroups}

Extended diassociative semigroups (briefly, EDS) are introduced in\cite{Foissy58}, where they are used to define generalizations of dendriform algebras. We here consider commutative extended semigroups:

\begin{defi}\label{defi1.1}
A commutative extended diassociative semigroup (briefly, CEDS) is a triple $(\Omega,\rightarrow,\triangleright)$, where $\Omega$ is a set
and $\rightarrow,\triangleright:\Omega^2\longrightarrow\Omega$ are maps such that, for any $\alpha,\beta,\gamma\in\Omega$,
\begin{align}
\label{EQ1}\alpha\rightarrow (\beta\rightarrow\gamma)&=(\alpha\rightarrow\beta)\rightarrow\gamma=(\beta\rightarrow\alpha)\rightarrow\gamma,\\
\label{EQ2}\alpha\triangleright (\beta\rightarrow\gamma)&=\alpha\triangleright\gamma,\\
\label{EQ3} (\alpha\triangleright\gamma)\rightarrow (\beta\triangleright\gamma)&=(\alpha\rightarrow\beta)\triangleright\gamma,\\
\label{EQ4} (\alpha\triangleright\gamma)\triangleright (\beta\triangleright\gamma)&=\alpha\triangleright\beta.
\end{align}\end{defi}

CEDS are particular examples of extended associative semigroups, introduced in\cite{Foissy58}.

\begin{defi}\cite{Foissy58}
An extended associative semigroup (briefly, EAS) is a triple $(\Omega,\rightarrow,\triangleright)$, where $\Omega$ is a set and $\rightarrow,\triangleright:\Omega^2\longrightarrow\Omega$ are maps such that, for any $\alpha,\beta,\gamma\in\Omega$,
\begin{align}
\label{EQ5}\alpha\rightarrow (\beta\rightarrow\gamma)&=(\alpha\rightarrow\beta)\rightarrow\gamma,\\
\label{EQ6}(\alpha\triangleright (\beta\rightarrow\gamma))\rightarrow (\beta\triangleright\gamma)&=(\alpha\rightarrow\beta)\triangleright\gamma,\\
\label{EQ7}(\alpha\triangleright (\beta\rightarrow\gamma))\triangleright (\beta\triangleright\gamma)&=\alpha\triangleright\beta.
\end{align}\end{defi}

More details and examples on these objects can be found in\cite{Foissy46}.

\subsection{Examples}

\begin{example} 
1. Let $\Omega$ be a set. We put
\begin{align*}
&\forall (\alpha,\beta)\in\Omega^2,&\begin{cases}
\alpha\rightarrow\beta=\beta,\\
\alpha\triangleright\beta=\alpha.
\end{cases}\end{align*}
Then $(\Omega,\rightarrow,\triangleright)$ is an EAS, denoted by $\eas(\Omega)$. It is a CEDS.\\

2. Let $(\Omega,\star)$ be an associative semigroup. We put
\begin{align*}
&\forall\alpha,\beta\in\Omega,&\alpha\triangleright\beta&=\alpha.
\end{align*}
It is an EAS, which we denote by $\eas(\Omega,\star)$. It is a CEDS if, and only if, for any $\alpha,\beta,\gamma\in\Omega$,
\[(\alpha\star\beta)\star\gamma=(\beta\star\alpha)\star\gamma.\]

3. Let $\Omega$ be a set with a binary operation $\triangleright$ such that, for any $\alpha,\beta,\gamma\in\Omega$,
\[(\alpha\triangleright\gamma)\triangleright (\beta\triangleright\gamma)=\alpha\triangleright\beta.\]
We then put
\begin{align*}
&\forall (\alpha,\beta)\in\Omega^2,&\alpha\rightarrow\beta&=\beta.
\end{align*}
Then $(\Omega,\rightarrow,\triangleright)$ is a CEDS (so is an EAS). This holds for example if $(\Omega,\star)$ is a group, with
\[\alpha\triangleright\beta=\alpha\star\beta^{\star-1}.\]
This EAS is denoted by $\eas'(\Omega,\star)$. 
\end{example}

\begin{defi}\label{defi1.3}
Let $(\Omega,\rightarrow,\triangleright)$ be an EAS. We shall say that it is non-degenerate if the following map is bijective:
\[\phi:\left\{\begin{array}{rcl}
\Omega^2&\longrightarrow&\Omega^2\\
(\alpha,\beta)&\longrightarrow&(\alpha\rightarrow\beta,\alpha\triangleright\beta).
\end{array}\right.\]
\end{defi}

\begin{example}\label{ex1.2}
1. Let $\Omega$ be a set. In $\eas(\Omega)$, for any $\alpha,\beta\in\Omega$, $\phi(\alpha,\beta)=(\beta,\alpha)$, so $\eas(\Omega)$ is non-degenerate and $\phi^{-1}=\phi$.\\

2. Let $(\Omega,\star)$ be a group. Then $\eas(\Omega,\star)$ is non-degenerate. Indeed, in this case, $\phi(\alpha,\beta)=(\alpha\star\beta,\alpha)$, so $\phi$ is a bijection, of inverse given by $\phi^{-1}(\alpha,\beta)=(\beta,\beta^{\star-1}\star\alpha)$.\\

3. Let $(\Omega,\star)$ be an associative semigroup with the right inverse condition. Then $\eas'(\Omega,\star)$ is non-degenerate.
Indeed, in this case, $\phi(\alpha,\beta)=(\beta,\alpha\star\beta^{\star-1})$, so $\phi$ is a bijection, of inverse given by $\phi^{-1}(\alpha,\beta)=(\beta\star\alpha,\alpha)$.
\end{example}

\subsection{Reformulations with the map $\phi$}

Let us first reformulate the axioms of EAS and CEDS in terms of the map $\phi$ of Definition\ref{defi1.3}.

\begin{lemma}\label{lem1.4}
Let $(\Omega,\rightarrow,\triangleright)$ be a set with two binary operations. We consider the maps
\begin{align*}
\phi&:\left\{\begin{array}{rcl}
\Omega^2&\longrightarrow&\Omega^2\\
(\alpha,\beta)&\longrightarrow&(\alpha\rightarrow\beta,\alpha\triangleright\beta),
\end{array}\right.&
\tau&:\left\{\begin{array}{rcl}
\Omega^2&\longrightarrow&\Omega^2\\
(\alpha,\beta)&\longrightarrow&(\beta,\alpha).
\end{array}\right.
\end{align*}
Then:
\begin{enumerate}
\item $(\Omega,\rightarrow,\triangleright)$ is an EAS if, and only if
\begin{align}
\label{EQ8}(\id\times\phi)\circ (\phi\times\id)\circ (\id\times\phi)&=(\phi\times\id)\circ (\id\times\tau)\circ (\phi\times\id).
\end{align}
\item $(\Omega,\rightarrow,\triangleright)$ is a CEDS if, and only if
\begin{align}
\tag{\ref{EQ8}}(\id\times\phi)\circ (\phi\times\id)\circ (\id\times\phi)&=(\phi\times\id)\circ (\id\times\tau)\circ (\phi\times\id),\\
\label{EQ9}(\id\times\phi)\circ (\id\times\tau)\circ (\tau\times\id)\circ (\phi\times\id)&=(\tau\times\id)\circ (\phi\times\id)\circ (\id\times\phi)\circ (\id\times\tau).
\end{align}\end{enumerate}\end{lemma}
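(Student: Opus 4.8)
The statement reduces to a direct computation of the two composite maps $\Omega^3 \to \Omega^3$ on each side, followed by a coordinate-by-coordinate comparison. I write $\phi \times \id$ for the map acting by $\phi$ on the first two factors of $\Omega^3$ and by the identity on the third, and $\id \times \phi$ for the analogous map on the last two factors, and similarly for $\tau$. The key fact I use repeatedly is that two maps valued in $\Omega^3$ agree if and only if their three coordinates agree on every triple $(\alpha,\beta,\gamma)$.

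For the first item, I would evaluate both sides of \eqref{EQ8} on a generic triple $(\alpha,\beta,\gamma)$. Composing from right to left, the left-hand side produces
\[
\left(\alpha \rightarrow (\beta \rightarrow \gamma),\ (\alpha \triangleright (\beta \rightarrow \gamma)) \rightarrow (\beta \triangleright \gamma),\ (\alpha \triangleright (\beta \rightarrow \gamma)) \triangleright (\beta \triangleright \gamma)\right),
\]
and the right-hand side produces
\[
\left((\alpha \rightarrow \beta) \rightarrow \gamma,\ (\alpha \rightarrow \beta) \triangleright \gamma,\ \alpha \triangleright \beta\right).
\]
Equality of these triples for all $(\alpha,\beta,\gamma)$ is exactly the conjunction of \eqref{EQ5}, \eqref{EQ6} and \eqref{EQ7}, which proves that \eqref{EQ8} is equivalent to the EAS axioms.

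For the second item, I would first record that a CEDS is in particular an EAS: replacing $\alpha \triangleright (\beta \rightarrow \gamma)$ by $\alpha \triangleright \gamma$ via \eqref{EQ2}, axioms \eqref{EQ6} and \eqref{EQ7} become \eqref{EQ3} and \eqref{EQ4}, while \eqref{EQ5} is the first half of \eqref{EQ1}. Hence a CEDS already satisfies \eqref{EQ8} by the first item, and only \eqref{EQ9} remains to be understood. Evaluating both sides of \eqref{EQ9} on $(\alpha,\beta,\gamma)$ in the same way gives, on the left,
\[
\left(\alpha \triangleright \beta,\ \gamma \rightarrow (\alpha \rightarrow \beta),\ \gamma \triangleright (\alpha \rightarrow \beta)\right),
\]
and, on the right,
\[
\left(\alpha \triangleright (\gamma \rightarrow \beta),\ \alpha \rightarrow (\gamma \rightarrow \beta),\ \gamma \triangleright \beta\right).
\]
Thus \eqref{EQ9} is equivalent to the three identities $\alpha \triangleright (\gamma \rightarrow \beta) = \alpha \triangleright \beta$, the relation $\gamma \rightarrow (\alpha \rightarrow \beta) = \alpha \rightarrow (\gamma \rightarrow \beta)$, and $\gamma \triangleright (\alpha \rightarrow \beta) = \gamma \triangleright \beta$.

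It then remains to show that, granting \eqref{EQ8} (equivalently the EAS axioms), these three identities are equivalent to the remaining CEDS axioms. The first and third identities are both \eqref{EQ2} after relabeling, so \eqref{EQ2} is available in both directions. To derive the full \eqref{EQ1}, I would rewrite $(\alpha \rightarrow \beta) \rightarrow \gamma$ and $(\beta \rightarrow \alpha) \rightarrow \gamma$ using the associativity \eqref{EQ5} and identify them through the commutation relation $a \rightarrow (b \rightarrow c) = b \rightarrow (a \rightarrow c)$ furnished by the middle identity; the converse is immediate. Finally \eqref{EQ3} and \eqref{EQ4} follow from \eqref{EQ6} and \eqref{EQ7} by using \eqref{EQ2} to rewrite $\alpha \triangleright (\beta \rightarrow \gamma)$ as $\alpha \triangleright \gamma$, a step which is reversible. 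The point requiring care is precisely this last reduction: the three coordinates of \eqref{EQ9} do not correspond to the CEDS axioms one-to-one---they encode \eqref{EQ2} together with the commutativity of nested $\rightarrow$-products---so recovering the symmetric half of \eqref{EQ1} genuinely requires combining \eqref{EQ9} with the associativity supplied by \eqref{EQ8}.
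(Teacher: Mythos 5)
Your proof is correct and follows essentially the same route as the paper: both evaluate the four composites on a generic triple $(\alpha,\beta,\gamma)$ and arrive at exactly the same three-coordinate expressions, so that (\ref{EQ8}) is read off as the conjunction of (\ref{EQ5})--(\ref{EQ7}) and (\ref{EQ9}) as (\ref{EQ2}) together with the commutation of nested $\rightarrow$-products. The only difference is that you make explicit the final identification --- recovering the symmetric half of (\ref{EQ1}) by combining the commutation identity with the associativity from (\ref{EQ8}), and converting (\ref{EQ6}), (\ref{EQ7}) into (\ref{EQ3}), (\ref{EQ4}) via (\ref{EQ2}) --- a step the paper compresses into ``the result immediately follows.''
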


\begin{proof}
Let $\alpha,\beta,\gamma\in\Omega$. Then
\begin{align*}
&(\id\times\phi)\circ (\phi\times\id)\circ (\id\times\phi)(\alpha,\beta,\gamma)\\
&=(\alpha\rightarrow (\beta\rightarrow\gamma),(\alpha\triangleright (\beta\rightarrow\gamma))\rightarrow (\beta\triangleright\gamma),(\alpha\triangleright (\beta\rightarrow\gamma))\triangleright (\beta\triangleright\gamma)),\\ 
&(\phi\times\id)\circ (\id\times\tau)\circ (\phi\times\id)(\alpha,\beta,\gamma)\\
&=((\alpha\rightarrow\beta)\rightarrow\gamma, (\alpha\rightarrow\beta)\triangleright\gamma,\alpha\triangleright\beta),\\ \\
&(\id\times\phi)\circ (\id\times\tau)\circ (\tau\times\id)\circ (\phi\times\id)(\alpha,\beta,\gamma)\\
&=(\alpha\triangleright\beta,\gamma\rightarrow (\alpha\rightarrow\beta),\gamma\triangleright (\alpha\rightarrow\beta)),\\ 
&(\tau\times\id)\circ (\phi\times\id)\circ (\id\times\phi)\circ (\id\times\tau)(\alpha,\beta,\gamma)\\
&=(\alpha\triangleright (\gamma\rightarrow\beta),\alpha\rightarrow (\gamma\rightarrow\beta),\gamma\triangleright\beta).
\end{align*}
The result immediately follows.\end{proof}

Reversing (\ref{EQ8}) gives (\ref{EQ8}) again; reversing (\ref{EQ9}), we obtain the notion of dual CEDS:

\begin{defi}
A dual CEDS is a pair $(\Omega,\phi)$ where $\Omega$ is a set and $\phi:\Omega^2\longrightarrow\Omega^2$
such that
\begin{align}
\tag{\ref{EQ8}}(\id\times\phi)\circ (\phi\times\id)\circ (\id\times\phi)&=(\phi\times\id)\circ (\id\times\tau)\circ (\phi\times\id),\\
\label{EQ10}(\phi\times\id)\circ (\tau\times\id)\circ (\id\times\tau)\circ (\id\times\phi)&=(\id\times\tau)\circ (\id\times\phi)\circ (\phi\times\id)\circ (\tau\times\id).
\end{align}\end{defi}

By direct computation, we can reformulate the axioms of dual CEDS:

\begin{prop}
Let $(\Omega,\rightarrow,\triangleright)$ be a map with two binary operations. It is a dual CEDS if, for any $\alpha,\beta,\gamma\in\Omega$,
\begin{align}
\label{EQ11} (\alpha\rightarrow\beta)\rightarrow\gamma&=\alpha\rightarrow (\beta\rightarrow\gamma),\\
\tag{\ref{EQ6}}(\alpha\triangleright (\beta\rightarrow\gamma))\rightarrow (\beta\triangleright\gamma)&=(\alpha\rightarrow\beta)\triangleright\gamma,\\
\tag{\ref{EQ7}} (\alpha\triangleright (\beta\rightarrow\gamma))\triangleright (\beta\triangleright\gamma)&=\alpha\triangleright\beta,\\
\label{EQ12} (\alpha\triangleright\beta)\rightarrow\gamma&=\alpha\rightarrow\gamma,\\
\label{EQ13} (\alpha\triangleright\beta)\triangleright\gamma&=(\alpha\triangleright\gamma)\triangleright\beta.
\end{align}\end{prop}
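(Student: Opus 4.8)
The plan is to mimic, verbatim, the proof of Lemma \ref{lem1.4}: each of the two defining identities of a dual CEDS is an equality of maps $\Omega^2\times\Omega\longrightarrow\Omega^2\times\Omega$ (equivalently, of maps $\Omega^3\to\Omega^3$), so it is equivalent to the equality of the three output coordinates when the two composites are evaluated on a generic triple $(\alpha,\beta,\gamma)$. Thus the whole statement reduces to a coordinate chase, and the only thing to do is carry it out cleanly and match each coordinate with one of the five scalar axioms.

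First I would dispose of (\ref{EQ8}) for free. It is literally the braid equation that already appears in Lemma \ref{lem1.4}, so by part (1) of that lemma it is equivalent to $(\Omega,\rightarrow,\triangleright)$ being an EAS, i.e.\ to the three axioms (\ref{EQ5}), (\ref{EQ6}), (\ref{EQ7}). Since (\ref{EQ5}) is exactly (\ref{EQ11}) with the two sides interchanged, this already accounts for three of the five scalar identities. It then remains only to show that, given these, equation (\ref{EQ10}) is equivalent to (\ref{EQ12}) together with (\ref{EQ13}); in fact the computation below shows (\ref{EQ10}) is by itself equivalent to (\ref{EQ12})\,$\wedge$\,(\ref{EQ13}), with no auxiliary hypotheses needed.

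Second, I would evaluate both sides of (\ref{EQ10}) on $(\alpha,\beta,\gamma)$, pushing the maps through one tensor factor at a time, recalling that composition is read right to left, that $\phi\times\id$ (resp.\ $\tau\times\id$) acts on the first two slots, and that $\id\times\phi$ (resp.\ $\id\times\tau$) acts on the last two. A direct computation gives
\begin{align*}
(\phi \times \id)\circ (\tau \times \id)\circ (\id \times \tau)\circ (\id \times \phi)(\alpha,\beta,\gamma)
&=\bigl((\beta\triangleright\gamma)\rightarrow\alpha,\ (\beta\triangleright\gamma)\triangleright\alpha,\ \beta\rightarrow\gamma\bigr),\\
(\id \times \tau)\circ (\id \times \phi)\circ (\phi \times \id)\circ (\tau \times \id)(\alpha,\beta,\gamma)
&=\bigl(\beta\rightarrow\alpha,\ (\beta\triangleright\alpha)\triangleright\gamma,\ (\beta\triangleright\alpha)\rightarrow\gamma\bigr).
\end{align*}
Comparing the three coordinates, the first gives $(\beta\triangleright\gamma)\rightarrow\alpha=\beta\rightarrow\alpha$ and the third gives $(\beta\triangleright\alpha)\rightarrow\gamma=\beta\rightarrow\gamma$; both are instances, after relabeling the free variables, of the single axiom (\ref{EQ12}), quantified over all triples. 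The middle coordinate gives $(\beta\triangleright\gamma)\triangleright\alpha=(\beta\triangleright\alpha)\triangleright\gamma$, which is exactly (\ref{EQ13}). Hence (\ref{EQ10}) holds identically if, and only if, (\ref{EQ12}) and (\ref{EQ13}) do, and combining this with the first step yields the proposition.

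I do not expect a genuine obstacle here, since the argument is purely mechanical; the only delicacy is bookkeeping. Concretely, one must keep careful track of which pair of slots each factor map touches and of the right-to-left order of composition, and — the one mildly non-obvious point — notice that coordinates one and three of (\ref{EQ10}) collapse to the \emph{same} axiom (\ref{EQ12}) after renaming, rather than imposing two independent constraints. Everything else runs exactly parallel to the verification of (\ref{EQ8}) already performed in the proof of Lemma \ref{lem1.4}.
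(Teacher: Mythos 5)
Your proposal is correct and is exactly the "direct computation" the paper alludes to (the paper omits the details): reduce (\ref{EQ8}) to the EAS axioms via Lemma \ref{lem1.4}, then evaluate both sides of (\ref{EQ10}) coordinatewise, and your two composites and the matching of coordinates with (\ref{EQ12}) and (\ref{EQ13}) all check out.
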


\begin{remark}
By definition, dual CEDS are EAS.
\end{remark}

We immediately obtain:

\begin{prop}\label{prop1.7}
Let $(\Omega,\rightarrow,\triangleright)$ be a set with two binary operations, such that the map $\phi$ is a bijection. We put
\[\phi^{-1}:\left\{\begin{array}{rcl}
\Omega^2&\longrightarrow&\Omega^2\\
(\alpha,\beta)&\longrightarrow&(\alpha\curvearrowright\beta,\alpha\blacktriangleright\gamma).
\end{array}\right.\]
Then $(\Omega,\rightarrow,\triangleright)$ is an EAS (respectively a CEDS, a dual CEDS) if, and only if, $(\Omega,\curvearrowright,\blacktriangleright)$ is an EAS (respectively a dual CEDS, a CEDS).
\end{prop}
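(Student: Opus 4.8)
The plan is to read off, from Lemma~\ref{lem1.4} and the two definitions that follow it, that each of the three properties is equivalent to a prescribed subset of the functional identities (\ref{EQ8}), (\ref{EQ9}) and (\ref{EQ10}) holding for the map $\phi$: being an EAS amounts to (\ref{EQ8}), being a CEDS amounts to (\ref{EQ8}) together with (\ref{EQ9}), and being a dual CEDS amounts to (\ref{EQ8}) together with (\ref{EQ10}). Every map appearing in these identities is a composite of the four maps $\phi\times\id$, $\id\times\phi$, $\tau\times\id$, $\id\times\tau$ of $\Omega^3$, which are bijections because $\phi$ is assumed bijective and $\tau$ is an involution. Hence each identity is an equality of bijections of $\Omega^3$ and may be inverted, and since $\phi^{-1}$ is exactly the map attached to the pair $(\curvearrowright,\blacktriangleright)$, the whole statement reduces to tracking what happens to the three identities under inversion.

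The computation uses only that the inverse of a composite reverses the factors and inverts each, together with $(\phi\times\id)^{-1}=\phi^{-1}\times\id$, $(\id\times\phi)^{-1}=\id\times\phi^{-1}$, $\tau^{-1}=\tau$ and $\id^{-1}=\id$. First I would invert (\ref{EQ8}): its left-hand side $(\id\times\phi)\circ(\phi\times\id)\circ(\id\times\phi)$ has inverse $(\id\times\phi^{-1})\circ(\phi^{-1}\times\id)\circ(\id\times\phi^{-1})$, and its right-hand side $(\phi\times\id)\circ(\id\times\tau)\circ(\phi\times\id)$ has inverse $(\phi^{-1}\times\id)\circ(\id\times\tau)\circ(\phi^{-1}\times\id)$, so inverting (\ref{EQ8}) returns (\ref{EQ8}) with $\phi$ replaced by $\phi^{-1}$. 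This already settles the EAS case.

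Next I would invert (\ref{EQ9}): its left-hand side $(\id\times\phi)\circ(\id\times\tau)\circ(\tau\times\id)\circ(\phi\times\id)$ inverts to $(\phi^{-1}\times\id)\circ(\tau\times\id)\circ(\id\times\tau)\circ(\id\times\phi^{-1})$, and its right-hand side inverts to $(\id\times\tau)\circ(\id\times\phi^{-1})\circ(\phi^{-1}\times\id)\circ(\tau\times\id)$; comparing with the definition of a dual CEDS, this is precisely (\ref{EQ10}) written for $\phi^{-1}$. Since inversion is involutive, the same manipulation shows that inverting (\ref{EQ10}) returns (\ref{EQ9}) for $\phi^{-1}$. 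Combining the three facts gives: (\ref{EQ8}) alone for $\phi$ is equivalent to (\ref{EQ8}) alone for $\phi^{-1}$ (EAS $\leftrightarrow$ EAS); (\ref{EQ8}) and (\ref{EQ9}) for $\phi$ is equivalent to (\ref{EQ8}) and (\ref{EQ10}) for $\phi^{-1}$ (CEDS $\leftrightarrow$ dual CEDS); and (\ref{EQ8}) and (\ref{EQ10}) for $\phi$ is equivalent to (\ref{EQ8}) and (\ref{EQ9}) for $\phi^{-1}$ (dual CEDS $\leftrightarrow$ CEDS), which is the claim.

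There is no genuine obstacle here: the argument is a formal verification. The only point requiring care is the bookkeeping of the order reversal, and in particular checking that the inverted forms of (\ref{EQ9}) and (\ref{EQ10}) match the \emph{stated} left- and right-hand sides of (\ref{EQ10}) and (\ref{EQ9}) rather than merely agreeing up to a further rearrangement; once the relevant definitions are written side by side this is immediate.
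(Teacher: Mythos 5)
Your proof is correct and is precisely the argument the paper intends: the paper states this proposition without proof ("We immediately obtain"), relying on exactly the observation that each of (\ref{EQ8}), (\ref{EQ9}), (\ref{EQ10}) is an identity between composites of the bijections $\phi\times\id$, $\id\times\phi$, $\tau\times\id$, $\id\times\tau$, and that inverting them swaps (\ref{EQ9}) and (\ref{EQ10}) while fixing (\ref{EQ8}). Your bookkeeping of the order reversal checks out against the stated forms of the identities, so nothing is missing.
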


\begin{example}
From Example\ref{ex1.2}:
\begin{enumerate}
\item If $(A,\Phi)=\eas(\Omega)$, then $(A,\Phi^{-1})=\eas(\Omega)$.
\item If $(\Omega,\star)$ is a group and if $(A,\Phi)=\eas(\Omega,\star)$, then $(A,\Phi^{-1})=\eas'(\Omega,\star^{op})$.
\item If $(\Omega,\star)$ is a group and if $(A,\Phi)=\eas'(\Omega,\star)$, then $(A,\Phi^{-1})=\eas(\Omega,\star^{op})$.
\end{enumerate}\end{example}

\subsection{Linear extended semigroups}

Following the discrete version of Lemma\ref{lem1.4}, we now introduce the notion of $\ell$EAS, $\ell$CEDS and dual $\ell$CEDS:

\begin{defi}\label{defi1.8}
Let $A$ be a vector space and let $\Phi:A\otimes A\longrightarrow A\otimes A$ be a linear map.
\begin{enumerate}
\item We shall say that $(A,\Phi)$ is a linear extended associative semigroup (briefly, $\ell$EAS) if
\begin{align}
\label{EQ14}(\id\otimes\Phi)\circ (\Phi\otimes\id)\circ (\id\otimes\Phi)&=(\Phi\otimes\id)\circ (\id\otimes\tau)\circ (\Phi\otimes\id).
\end{align}
\item We shall say that $(A,\Phi)$ is a linear commutative extended diassociative semigroup (briefly, $\ell$CEDS) if
\begin{align}
\tag{\ref{EQ14}}(\id\otimes\Phi)\circ (\Phi\otimes\id)\circ (\id\otimes\Phi)&=(\Phi\otimes\id)\circ (\id\otimes\tau)\circ (\Phi\otimes\id),\\
\label{EQ15}(\id\otimes\Phi)\circ (\id\otimes\tau)\circ (\tau\otimes\id)\circ (\Phi\otimes\id)&=(\tau\otimes\id)\circ (\Phi\otimes\id)\circ (\id\otimes\Phi)\circ (\id\otimes\tau).
\end{align}
\item We shall say that $(A,\Phi)$ is a linear dual commutative extended diassociative semigroup (briefly, dual $\ell$CEDS) if
\begin{align}
\tag{\ref{EQ14}}(\id\otimes\Phi)\circ (\Phi\otimes\id)\circ (\id\otimes\Phi)&=(\Phi\otimes\id)\circ (\id\otimes\tau)\circ (\Phi\otimes\id),\\
\label{EQ16}(\Phi\otimes\id)\circ (\tau\otimes\id)\circ (\id\otimes\tau)\circ (\id\otimes\Phi)&=(\id\otimes\tau)\circ (\id\otimes\Phi)\circ (\Phi\otimes\id)\circ (\tau\otimes\id).
\end{align}
\end{enumerate}
If $(A,\Phi)$ is an $\ell$EAS (respectively an $\ell$CEDS or a dual $\ell$CEDS), we shall say that it is non-degenerate if $\Phi$ is bijective.
\end{defi}

Note that, by definition, $\ell$CEDS and dual $\ell$CEDS are $\ell$EAS.

\begin{example}\label{ex1.4} 
Let $(\Omega,\rightarrow,\triangleright)$ be an EAS (respectively, a CEDS, a dual CEDS), and let $A=\K\Omega$ be the vector space generated by $\Omega$. We define
\[\Phi:\left\{\begin{array}{rcl}
A\otimes A&\longrightarrow&A\otimes A\\
a\otimes b&\longrightarrow&(a\rightarrow b)\otimes (a\triangleright b).
\end{array}\right.\]
Then $(A,\Phi)$ is an $\ell$EAS (respectively, an $\ell$CEDS, a dual $\ell$CEDS), called the linearization of $(\Omega,\rightarrow,\triangleright)$. It is a non-degenerate $\ell$EAS if, and only if, $(\Omega,\rightarrow,\triangleright)$ is a non-degenerate EAS.
\end{example}

Other examples can be found in\cite{Foissy46}.

\begin{notation}
Let $(A,\Phi)$ be an $\ell$EAS. We use the Sweedler notation
\begin{align}
\label{EQ17}\Phi(a\otimes b)&=\sum a'\rightarrow b'\otimes a''\triangleright b''.
\end{align}
Note that the operations $\rightarrow$ and $\triangleright$ may not necessarily exist, nor the coproducts $a'\otimes a''$ or $b'\otimes b''$. With this notation, (\ref{EQ14}) can be rewritten as
\begin{align}
\tag{\ref{EQ14}'}&\sum\sum\sum a'\rightarrow (b'\rightarrow c')'\otimes(a''\triangleright (b'\rightarrow c')'')'\rightarrow (b''\triangleright c'')'\otimes (a''\triangleright (b'\rightarrow c'))''\triangleright (b''\triangleright c'')''\\
\nonumber &=\sum\sum (a'\rightarrow b')'\rightarrow c'\otimes (a'\rightarrow b')''\triangleright c''\otimes a''\triangleright b''.
\end{align}
Similarly, (\ref{EQ15}) and (\ref{EQ16}) are rewritten as
\begin{align}
\tag{\ref{EQ15}'}&\sum\sum a''\triangleright (c'\rightarrow b')''\otimes a'\rightarrow (c''\rightarrow b'')'\otimes c''\triangleright b''\\
\nonumber&=\sum\sum a''\triangleright b''\otimes c'\rightarrow (a'\rightarrow b')'\otimes c''\triangleright (a'\rightarrow b')'',\\ 
\nonumber\\
\tag{\ref{EQ16}'}&\sum\sum (b''\triangleright c'')'\rightarrow a'\otimes (b''\triangleright c'')''\triangleright a''\otimes b'\rightarrow c'\\
\nonumber&=\sum\sum b'\rightarrow a'\otimes (b''\triangleright a'')''\triangleright c''\otimes (b''\triangleright a'')'\rightarrow c'.
\end{align}\end{notation}

By transposition of (\ref{EQ14}), (\ref{EQ15}), (\ref{EQ16}):

\begin{prop}
Let $V$ be a finite-dimensional space and $\Phi:V\otimes V\longrightarrow V\otimes V$ be a linear map.
We consider $\Phi^*:V^*\otimes V^*=(V\otimes V)^*\longrightarrow(V\otimes V)^*=V^*\otimes V^*$. Then $(V,\Phi)$ is an $\ell$EAS [respectively an $\ell$CEDS, a dual $\ell$CEDS] if, and only if, $(V^*,\Phi^*)$ is an $\ell$EAS [respectively a dual $\ell$CEDS, an $\ell$CEDS].
\end{prop}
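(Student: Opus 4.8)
The plan is to prove everything by transposition, using that for a finite-dimensional $V$ the assignment $\Phi\mapsto\Phi^*$ is a bijection on linear endomorphisms (and, under the canonical identification $V^{**}=V$, an involution), so that two maps agree if and only if their transposes agree. Concretely, I would fix the canonical identifications $(V\otimes V)^*=V^*\otimes V^*$ and $(V\otimes V\otimes V)^*=V^*\otimes V^*\otimes V^*$ coming from the pairing $\langle f\otimes g,v\otimes w\rangle=f(v)g(w)$, and record the three transposition rules I shall use repeatedly: $(f\circ g)^*=g^*\circ f^*$, $(f\otimes g)^*=f^*\otimes g^*$ under these identifications, and $\tau^*=\tau$ (the flip is self-transpose, as a one-line check on the pairing shows). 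The effect of transposing a composite of the elementary maps $\Phi\otimes\id$, $\id\otimes\Phi$, $\tau\otimes\id$, $\id\otimes\tau$ on $V^{\otimes 3}$ is then simply to reverse the order of the factors and replace $\Phi$ by $\Phi^*$, leaving $\tau$ and the slot positions untouched.

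First I would treat the $\ell$EAS case by transposing (\ref{EQ14}). Reversing the order of the three factors on each side and using $(\Phi\otimes\id)^*=\Phi^*\otimes\id$, $(\id\otimes\Phi)^*=\id\otimes\Phi^*$, $(\id\otimes\tau)^*=\id\otimes\tau$, one finds that the transpose of (\ref{EQ14}) for $\Phi$ is exactly (\ref{EQ14}) for $\Phi^*$; the equation is self-dual. Since transposition is a bijection, $(V,\Phi)$ is an $\ell$EAS if and only if $(V^*,\Phi^*)$ is an $\ell$EAS.

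Next I would transpose (\ref{EQ15}). Reversing the four factors on each side and substituting the elementary transposes above, the left-hand side of (\ref{EQ15}) for $\Phi$ becomes the left-hand side of (\ref{EQ16}) for $\Phi^*$, and the right-hand side of (\ref{EQ15}) becomes the right-hand side of (\ref{EQ16}); thus (\ref{EQ15}) for $\Phi$ is equivalent to (\ref{EQ16}) for $\Phi^*$. The symmetric computation shows that (\ref{EQ16}) for $\Phi$ is equivalent to (\ref{EQ15}) for $\Phi^*$, which, since $\Phi^{**}=\Phi$, is just the previous statement read backwards. Combining with the $\ell$EAS case: $(V,\Phi)$ satisfies (\ref{EQ14}) and (\ref{EQ15}), i.e. is an $\ell$CEDS, if and only if $(V^*,\Phi^*)$ satisfies (\ref{EQ14}) and (\ref{EQ16}), i.e. is a dual $\ell$CEDS; and symmetrically a dual $\ell$CEDS transposes to an $\ell$CEDS.

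The whole argument is routine once the transposition dictionary is set up; the only point requiring care --- and the step I expect to be the main source of slips --- is the bookkeeping of the order reversal together with the slot positions, i.e. checking that after reversing a composite the maps $\Phi^*$ land in the intended tensor factors and that the copies of $\tau$ sit precisely where (\ref{EQ16}) (respectively (\ref{EQ15})) demands. I would confirm this by writing out each of the four composites explicitly, exactly as in the $\ell$EAS case, which makes the matches with (\ref{EQ16}) and (\ref{EQ15}) immediate.
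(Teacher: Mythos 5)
Your proposal is correct and is exactly the argument the paper intends: the paper's proof is the single remark ``By transposition of (\ref{EQ14}), (\ref{EQ15}), (\ref{EQ16})'', and your detailed bookkeeping (order reversal under transposition, $\tau^*=\tau$, the palindromic shape of (\ref{EQ14}) making it self-dual, and the reversal of (\ref{EQ15}) landing precisely on (\ref{EQ16})) checks out. Nothing further is needed.
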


\begin{example}
Let $\Omega$ be a finite EAS and $A=(\K\Omega,\Phi)$ be its linearization. The dual $A^*$ is identified with the space $\K^\Omega$ of maps from $\Omega$ to $\K$, with the dual basis $(\partial_\alpha)_{\alpha\in\Omega}$ of the basis $\Omega$ of $\K\Omega$. 
Then, for any $\alpha,\beta\in\Omega$,
\[\Phi^*(\partial_\alpha\otimes\partial_\beta)=\sum_{(\gamma,\partial)\in\phi^{-1}(\alpha,\beta)}\partial_\gamma\otimes\partial_\delta.\]
If $(\Omega,\Phi)$ is degenerate, this is not the linearization of an EAS.
\end{example}

\section{Generalized prelie algebras}

\subsection{Definition}

\begin{defi}\label{defi2.1}
Let $A$ be a vector space, $\Phi:A\otimes A\longrightarrow A\otimes A$ be a linear map, and let $(V,\circ)$ be a family such that $V$ is a vector space and $\circ$ is a linear map:
\[\circ:\left\{\begin{array}{rcl}
A&\longrightarrow&\mathcal{L}(V\otimes V,V)\\
a&\longrightarrow&\circ _a:\left\{\begin{array}{rcl}
V\otimes V&\longrightarrow&V\\
v\otimes w&\longrightarrow&v\circ_a w.
\end{array}\right.
\end{array}\right.\]
We shall say that $(V,\circ)$ is a $\Phi$-prelie algebra if, for any $x,y,z\in V$, for any $a,b\in A$, using Sweedler's notation (\ref{EQ17}) for $\Phi$,
\begin{align}
\label{EQ18}x\circ_a (y\circ_b z)-\sum (x\circ_{a''\triangleright b''} y)\circ_{a'\rightarrow b'}z&=y\circ_b (x\circ_a z)-\sum (y\circ_{b''\triangleright a''} x)\circ_{b'\rightarrow a'}z.
\end{align}
We denote by $\prelie_\Phi$ the operad of $\Phi$-prelie algebras.
\end{defi}

\begin{example}
1. Let $(\Omega,\rightarrow,\triangleright)$ be a CEDS and $(A,\Phi)$ its linearization. We obtained a discretized version of generalized prelie algebras:
for any $\alpha\in\Omega$, $V$ is given a product $\circ_\alpha:V\otimes V\longrightarrow V$, such that, for any $\alpha,\beta\in\Omega$, for any $x,y,z\in V$,
\begin{align*}
x\circ_\alpha (y\circ_\beta z)- (x\circ_{\alpha\triangleright\beta} y)\circ_{\alpha\rightarrow\beta}z&=y\circ_\beta (x\circ_\alpha z)- (y\circ_{\beta\triangleright\alpha} x)\circ_{\beta\rightarrow\alpha}z.
\end{align*}
For example:
\begin{itemize}
\item If $\Omega$ is a set, taking $\eas(\Omega)$ (which is a CEDS), we obtain $\Omega$-matching prelie algebras of\cite{Foissy47,Zhang2020}.
\item If $(\Omega,\star)$ is a commutative semigroup, taking $\eas(\Omega,\star)$ (which is a CEDS), we obtain $\Omega$-family prelie algebras of\cite{Aguiar2020,Manchon2020}.
\end{itemize}

2. Let us take $A=\K$ and let us put $\bullet=\circ_1$. If $\Phi(1\otimes 1)=1\otimes 1$, then (\ref{EQ18}) becomes
\[x\bullet (y\bullet z)-(x\bullet y)\bullet z=y\bullet (x\bullet z)-(y\bullet x)\bullet z.\]
In other words, $\Phi$-prelie algebras are (left) prelie algebras. f $\Phi(1\otimes 1)=0$, then (\ref{EQ18}) becomes
\[x\bullet (y\bullet z)=y\bullet (x\bullet z).\]
In other words, $\Phi$-prelie algebras are non-associative permutative (briefly, NAP) algebras, as considered by\cite{Livernet2006}.
\end{example}

\subsection{Structure on rooted trees}

Let $A$ be a vector space and $\calD$ be a set. 

\begin{notation}
1. Let us denote by $\calT$ the set of rooted trees:
\[\calT=\left\{
\substack{\hspace{1cm}\\ \tun,\tdeux,\ttroisun,\ttroisdeux,\tquatreun,\tquatredeux,\tquatrequatre,\tquatrecinq,\tcinqun,\tcinqdeux,\tcinqcinq,\tcinqsix,\tcinqhuit,\tcinqdix,\tcinqonze,\tcinqdouze,\tcinqquatorze\ldots}\right\}.\]
We denote by $\calT_{A,\calD}$ the set of $\calD$-decorated and $A$-typed trees, that is to say rooted trees $T$ with a map $d:V(T)\longrightarrow\calD$ from the set of vertices of $T$ to $\calD$
and a map $t:E(T)\longrightarrow A$ from the set of edges of $T$ to $A$, the type being linear in each edge:
for example, if $\g_{A,\calD}$, if $d_1,d_2,d_3\in\calD$, $a,a',b,b'\in A$ and $\lambda,\mu\in\K$,
\begin{align*}
\xymatrix{\rond{d_2}\ar@{-}[rd]_{a+\lambda a'}&\rond{d_3}\ar@{-}[d]^{b+\mu b'}\\&\rond{d_1}}&=\xymatrix{\rond{d_2}\ar@{-}[rd]_a&\rond{d_3}\ar@{-}[d]^b\\&\rond{d_1}}+\lambda\xymatrix{\rond{d_2}\ar@{-}[rd]_{a'}&\rond{d_3}\ar@{-}[d]^b\\&\rond{d_1}}\\
&+\mu\xymatrix{\rond{d_2}\ar@{-}[rd]_a&\rond{d_3}\ar@{-}[d]^{b'}\\&\rond{d_1}}+\lambda\mu\xymatrix{\rond{d_2}\ar@{-}[rd]_{a'}&\rond{d_3}\ar@{-}[d]^{b'}\\&\rond{d_1}}.
\end{align*}
We denote by $\g_{A,\calD}$ the vector space generated by $\calT_{A,\calD}$, the trees being linear in the type of each edge.
More formally, denoting by $\calT_\calD$ the set of isoclasses of rooted trees decorated by $\calD$,
\[g_{A,\calD}=\bigoplus_{T\in\calT_\calD}\K T\otimes_{\mathrm{Aut}(T)}\bigotimes_{e\in E(T)} A,\]
where $\mathrm{Aut}(T)$ acts trivially on $\K T$ and by permutation of the tensors on $\displaystyle\bigotimes_{e\in E(T)} A$.\\

2. If $T_1,\ldots,T_k\in\calT_{A,\calD}$, $d\in\calD$ and $a_1,\ldots,a_k\in A$, we denote by $B_d(a_1T_1,\ldots,a_kT_k)$ the tree obtained by grafting $T_1,\ldots,T_k$ on a common root decorated by $d$; the edge relating the root to the root of $T_i$
in the process is of type $a_i$ for any $i$. This defines for any $d\in\calD$ a map
$B_d:S(A\otimes\g_{A,\calD})\longrightarrow\g_{A,\calD}$. For example, if $d_1,d_2,d_3,d\in\calD$
and $a,b,c\in A$,
\[B_d\left(a\xymatrix{\rond{d_1}},\:b\hspace{-5mm}\substack{\hspace{2cm}\\ \xymatrix{\rond{d_3}\ar@{-}[d]^c\\ \rond{d_2}}}
\hspace{-5mm}\right)=\substack{\hspace{5mm}\\ \xymatrix{&\rond{d_3}\ar@{-}[d]^c\\
\rond{d_1}\ar@{-}[rd]_a&\rond{d_2}\ar@{-}[d]^b\\
&\rond{d}}}.\]
\end{notation}

Generalizing the construction of free prelie algebras of\cite{Chapoton2001}, we obtain:

\begin{theo}\label{theo2.2}
Let $\Phi:A\otimes A\longrightarrow A\otimes A$ be a map. We define products $\circ_a:\g_{A,\calD}\otimes\g_{A,\calD}\longrightarrow\g_{A,\calD}$ for any $a\in A$ in an inductive way: for any $T,T_1,\ldots,T_k\in\calT_{A,\calD}$, for any $a,a_1,\ldots,a_k\in A$,
\begin{align*}
T\circ_a B_d(a_1T_1,\ldots,a_kT_k)&=B_d(a T,a_1T_1,\ldots, a_kT_k)\\
&+\sum_{i=1}^k\sum B_d(a_1T_1,\ldots,a_{i-1}T_{i-1}, (a'\rightarrow a_i') T\circ_{a ''\triangleright a_i''}T_i,a_{i+1}T_{i+1},\ldots, a_kT_k).
\end{align*}
Then, for any set $\calD$, $(\g_{A,\calD},\circ)$ is a $\Phi$-prelie algebra if, and only if, $(A,\Phi)$ is an $\ell$CEDS.
Moreover, if this holds, then $\g_{A,\calD}$ is the free $\Phi$-prelie algebra generated by trees $\xymatrix{\rond{d}}$ with one vertex, decorated by an element $d$ of $\calD$. 
\end{theo}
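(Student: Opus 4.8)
The plan is to treat four assertions in turn: well-definedness of the products $\circ_a$, the implication [$\ell$CEDS $\Rightarrow$ $\Phi$-prelie], its converse, and finally freeness. First I would check that the inductive formula for $\circ_a$ is consistent: its right-hand side is symmetric in the arguments $a_1T_1,\ldots,a_kT_k$ and linear in each edge-type, so it descends to $\g_{A,\calD}$, and the recursion terminates because each $T_i$ has strictly fewer vertices than $B_d(a_1T_1,\ldots,a_kT_k)$. The heuristic guiding the whole proof is that $T\circ_a S$ is a sum of graftings of $T$ onto the vertices of $S$, where the insertion type $a$ is transported from the root of $S$ up to the insertion vertex by iterating $\Phi$ edge by edge: crossing an edge of type $c$ with current type $e$ replaces that edge type by the first tensor component of $\Phi(e\otimes c)$ and continues upward with the second component.

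For the implication [$\ell$CEDS $\Rightarrow$ $\Phi$-prelie], I would establish (\ref{EQ18}) for all $x,y,z$ by induction on the number of vertices of $z=B_d(c_1Z_1,\ldots,c_mZ_m)$, expanding both sides with the defining recursion and sorting the resulting terms by the relative position of the two insertion points of $x$ and $y$. The contributions in which $x$ and $y$ enter distinct branches of $z$ match at once, while those entering the same branch $Z_i$ fall under the inductive hypothesis on $Z_i$ once the two insertion types have been carried across the common edge $c_i$; reorganising that common crossing is one application of the braid equation (\ref{EQ14}). The genuinely top-level contributions are of two kinds: those in which $x$ is grafted inside $y$ just above a common edge, whose cancellation against the subtracted products $\sum(x\circ_{a''\triangleright b''}y)\circ_{a'\rightarrow b'}z$ is again (\ref{EQ14}); and those in which $x$ and $y$ come to sit as siblings just above a common edge, whose symmetry under exchanging the two insertion orders is the commutation relation (\ref{EQ15}). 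I expect this sorting --- isolating, among the many terms of the double expansion, the few families not absorbed by induction and matching each to (\ref{EQ14}) or (\ref{EQ15}) applied to (type of $x$)$\otimes$(type of $y$)$\otimes$(common edge type) --- to be the main obstacle.

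For the converse [$\Phi$-prelie $\Rightarrow$ $\ell$CEDS] I would feed the identity (\ref{EQ18}) two well-chosen small trees, taking $\calD$ with at least four distinct elements $d_0,d_1,d_x,d_y$ and letting the types $a,b,c\in A$ be arbitrary. Let $x$ and $y$ be the one-vertex trees decorated by $d_x$ and $d_y$, and let $z$ be the tree with root decorated $d_0$ carrying a single child decorated $d_1$ through an edge of type $c$. Projecting the prelie identity onto the tree-shape in which $d_1$ carries the two leaves $d_x,d_y$ isolates the configuration where $x$ and $y$ become siblings above the edge of type $c$, and the resulting equality of edge-data is, after transport by $\tau$, exactly (\ref{EQ15}). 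Projecting instead onto the path $d_0$--$d_1$--$d_y$--$d_x$ isolates the configuration where $x$ is grafted below $y$ above the same edge; since the right-hand side of (\ref{EQ18}) can only place $d_y$ below $d_x$, this term must cancel on the left against the subtracted product, and that cancellation is exactly (\ref{EQ14}). As $a,b,c$ are arbitrary and the relevant shapes are linearly independent for distinct decorations, one recovers (\ref{EQ14}) and (\ref{EQ15}) as operator identities on $A\otimes A\otimes A$, so $(A,\Phi)$ is an $\ell$CEDS.

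Finally, for freeness under the $\ell$CEDS hypothesis, I would verify the universal property directly, adapting \cite{ChapotonLivernet}. Given a $\Phi$-prelie algebra $(V,\circ)$ and elements $(v_d)_{d\in\calD}$, there is at most one $\Phi$-prelie morphism $F$ sending the one-vertex tree decorated by $d$ to $v_d$, because the single-vertex trees generate $\g_{A,\calD}$: for $k\ge 1$ the recursion gives $F(T_k)\circ_{a_k}B_d(a_1T_1,\ldots,a_{k-1}T_{k-1})=B_d(a_1T_1,\ldots,a_kT_k)+(\text{graftings of }T_k\text{ strictly deeper})$, so, ordering trees so that deeper graftings are smaller, $B_d(a_1T_1,\ldots,a_kT_k)$ is expressed through products of strictly smaller trees. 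This both forces the value of $F$ and defines it by induction; the only point to check is that the definition is independent of the order in which the branches are peeled off, and this independence is precisely what (\ref{EQ18}) in $V$ provides --- the same relation we have just shown to hold on $\g_{A,\calD}$. I expect this ordering/symmetrisation argument to be the delicate part of the freeness step, the type-decorations being carried along automatically since their transport is governed by the already-verified $\ell$CEDS axioms.
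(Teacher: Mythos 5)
Your proposal follows the paper's proof essentially step for step: necessity of the $\ell$CEDS axioms is obtained, exactly as in the paper, by evaluating (\ref{EQ18}) on one-vertex trees $x,y$ and a two-vertex tree $z$ and identifying the coefficients of the four-vertex ladder (yielding (\ref{EQ14})) and of the cherry-over-an-edge shape (yielding (\ref{EQ15}) up to a flip), while sufficiency is proved by the same induction on the number of vertices of $z$, with the braid relation absorbing the nested graftings, the commutation relation the sibling configuration, and the induction hypothesis the insertions into a common branch. For freeness the paper only cites \cite{ChapotonLivernet}, and your generation-by-triangularity plus universal-property sketch is a correct stand-in at a comparable level of detail, so there is nothing substantive to object to.
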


\begin{example}
Let $a,b\in A$ and $X,Y,Z\in\calD$. Then
\begin{align*}
\xymatrix{\rond{X}}\circ_a\xymatrix{\rond{Y}}
&=\substack{\hspace{3mm}\\ \xymatrix{\rond{X}\ar@{-}[d]^a\\
\rond{Y}}},&
\xymatrix{\rond{X}}\circ_a\substack{\hspace{3mm}\\ \xymatrix{\rond{Z}\ar@{-}[d]^b\\
\rond{Y}}}&=\substack{\hspace{3mm}\\ \xymatrix{\rond{X}\ar@{-}[rd]_a&\rond{Z}\ar@{-}[d]^b\\
&\rond{Y}}}+\sum\substack{\hspace{5mm}\\ \xymatrix{\rond{X}\ar@{-}[d]^{a''\triangleright b''}\\
\rond{Z}\ar@{-}[d]^{a'\rightarrow b'}\\
\rond{Y}}}.
\end{align*}\end{example}

\begin{proof} $\Longrightarrow$. Let $\calD=\{X,Y,Z\}$ be a set of cardinality 3. In $\g_{A,\calD}$,
\begin{align*}
&\xymatrix{\rond{X}}\circ_a\left(\xymatrix{\rond{Y}}\circ_b
\substack{\hspace{3mm}\\ \xymatrix{\rond{Z}\ar@{-}[d]^c\\
\rond{X}}}\right)=\substack{\hspace{3mm}\\ \xymatrix{\rond{X}\ar@{-}[rd]_a
&\rond{Y}\ar@{-}[d]^b&\rond{Z}\ar@{-}[ld]^c\\&\rond{X}}}
+\sum\substack{\hspace{5mm}\\ \xymatrix{\rond{X}\ar@{-}[d]_{a''\triangleright b''}&\\
\rond{Y}\ar@{-}[rd]_{a'\rightarrow b'}&\rond{Z}\ar@{-}[d]^c\\&\rond{X}&}
\hspace{-1cm}}+\sum\substack{\hspace{5mm}\\ \xymatrix{&\rond{X}\ar@{-}[d]^{a''\triangleright c''}\\
\rond{Y}\ar@{-}[rd]_b&\rond{Z}\ar@{-}[d]^{a'\rightarrow c'}\\
&\rond{X}&}}\\
&+\sum\substack{\hspace{5mm}\\ \xymatrix{&\rond{Y}\ar@{-}[d]^{b''\triangleright c''}\\
\rond{X}\ar@{-}[rd]_a&\rond{Z}\ar@{-}[d]^{b'\rightarrow c'}\\&\rond{X}&}}
\hspace{-.5cm}+\sum\sum 
\substack{\hspace{5mm}\\ \xymatrix{\rond{X}\ar@{-}[rd]_{a''\triangleright (b'\rightarrow c')''}&\rond{Y}\ar@{-}[d]^{b''\triangleright c''}\\
&\rond{Z}\ar@{-}[d]^{a'\rightarrow (b'\rightarrow c')}\\&\rond{X}}}
+\sum\sum\sum\substack{\hspace{8mm}\\ \xymatrix{\rond{X}\ar@{-}[d]^{(a''\triangleright (b'\rightarrow c')'')''
\triangleright (b''\triangleright c'')''}\\
\rond{Y}\ar@{-}[d]^{(a''\triangleright (b'\rightarrow c')'')'\rightarrow (b''\triangleright c'')'}\\
\rond{Z}\ar@{-}[d]^{a'\rightarrow(b'\rightarrow c')'}\\
\rond{X}}},
\end{align*}
whereas
\begin{align*}
\left(\xymatrix{\rond{X}}\circ_{a''\triangleright b''}\xymatrix{\rond{Y}}\right)\circ_{a'\rightarrow b'}
\substack{\hspace{3mm}\\ \xymatrix{\rond{Z}\ar@{-}[d]^c\\
\rond{X}}}&=\sum\substack{\hspace{5mm}\\ \xymatrix{\rond{X}\ar@{-}[d]_{a''\triangleright b''}&\\
\rond{Y}\ar@{-}[rd]_{a'\rightarrow b'}&\rond{Z}\ar@{-}[d]^c\\
&\rond{X}&}}\hspace{-1cm}+\sum\sum\substack{\hspace{8mm}\\ \xymatrix{\rond{X}\ar@{-}[d]^{a''\triangleright b''}\\
\rond{Y}\ar@{-}[d]^{(a'\rightarrow b')''\triangleright c''}\\
\rond{Z}\ar@{-}[d]^{(a'\rightarrow b')'\rightarrow c'}\\
\rond{X}}}.
\end{align*}
Hence, relation (\ref{EQ18}) gives, by identification of the types of the types of the edges of the trees of the form
\begin{align*}
&\substack{\hspace{5mm}\\ \xymatrix{\rond{X}\ar@{-}[rd]&\rond{Y}\ar@{-}[d]\\
&\rond{Z}\ar@{-}[d]\\
&\rond{X}}}&\mbox{ and }&&\substack{\hspace{8mm}\\ \xymatrix{\rond{X}\ar@{-}[d]\\
\rond{Y}\ar@{-}[d]\\
\rond{Z}\ar@{-}[d]\\
\rond{X}}},
\end{align*}
the relation (\ref{EQ15}) for the first one, up to the permutation of $b$ and $c$, and (\ref{EQ14}) for the second one. So $A$ is an $\ell$CEDS.\\

$\Longleftarrow$. Let us prove (\ref{EQ18}) for $x$, $y$ and $z$ trees, by induction on the number of vertices $n$ of $z$. We put $z=B_d(a_1T_1,\ldots,a_kT_k)$. Then
\begin{align*}
x\circ_a (y\circ_b z)&=B_d(a x,b, y, a_1T_1,\ldots,a_kT_k)+\sum B_d((a'\rightarrow b')(x\circ_{a''\triangleright b''} y),a_1T_1,\ldots,a_kT_k)\\
&+\sum_{i=1}^k\sum B_d(b y, a_1T_1,\ldots, (a'\rightarrow a_i')(x\circ_{a''\triangleright a_i''}T_i),\ldots,a_kT_k)\\
&+\sum_{i=1}^k\sum B_d(a x, a_1T_1,\ldots, (b'\rightarrow a_i')(y\circ_{b''\triangleright a_i''}T_i),\ldots,a_kT_k)\\
&+\sum_{i=1}^k\sum\sum B_d(a_1T_1,\ldots,((a'\rightarrow (b'\rightarrow a_i')')x\circ_{a''\triangleright(b'\rightarrow a_i')''}(y\circ_{b''\triangleright a_i''} T_i)),\ldots,a_kT_k)\\
&+\sum_{i\neq j}\sum\sum B_d(a_1T_1,\ldots, (a'\rightarrow a_i')(x\circ_{a''\triangleright a_i''}T_i),\ldots,(b'\rightarrow a_j')(y\circ_{b''\triangleright a_j''}T_j),\ldots,a_kT_k),
\end{align*}
and
\begin{align*}
&\sum (x\circ_{a''\triangleright b''} y)\circ_{a'\rightarrow b'} z\\
&=\sum B_d((a'\rightarrow b')(x\circ_{a''\triangleright b''} y),a_1T_1,\ldots,a_kT_k)\\
&+\sum_{i=1}^k\sum\sum B_d(a_1T_1,\ldots,((a'\rightarrow b')'\rightarrow a_i')(x\circ_{a''\triangleright b''} y)\circ_{(a'\rightarrow b')''\triangleright a_i''} T_i),\ldots,a_kT_k).
\end{align*}
Hence,
\begin{align}
\label{EQ19}x\circ_a (y\circ_b z)-\sum (x\circ_{a'\triangleright b'} y)\circ_{a''\rightarrow b''} z&=\sum_{i=1}^kB_d(a_1T_1,\ldots,\omega(a x,b y,a_iT_i),\ldots,a_kT_k)\\
\nonumber &+\mbox{terms symmetric in }(a x,b y),
\end{align}
where
\begin{align*}
\omega(a x,b y,c z')&=\sum\sum a'\rightarrow (b'\rightarrow c')' x\circ_{a''\triangleright (b'\rightarrow c')''}(y\circ_{b''\triangleright c''} z')\\
&-\sum\sum (a'\rightarrow b')'\rightarrow c '(x\circ_{a''\triangleright b''} y)\circ_{(a'\rightarrow b')''\triangleright c''} z'.
\end{align*}
If $n=0$, then $k=0$, so (\ref{EQ18}) is satisfied from (\ref{EQ19}). Otherwise, we put
\begin{align*}
u&=\sum\sum a'\rightarrow (b'\rightarrow c')'\otimes a''\triangleright(b'\rightarrow c')''\otimes b''\triangleright c''.
\end{align*}
Then
\begin{align*}
\id\otimes\Phi(u)=\sum\sum (a'\rightarrow b')'\rightarrow c'\otimes (a'\rightarrow b')''\triangleright c''\otimes a''\triangleright b''.
\end{align*}
Applying the induction hypothesis on $z'=T_i$, we obtain that
\[\omega(a x,b y,a_i T_i)=0.\]
So by (\ref{EQ19}), (\ref{EQ18}) is satisfied for $(x,yz)$. Hence, $\g_{A,\calD}$ is indeed a $\Phi$-prelie algebra.\\

The proof of its freeness is similar to the proof of\cite{Chapoton2001}, see also\cite{Foissy47}. 
\end{proof}

As the number of indexed rooted trees with $n$ vertices is $n^{n-1}$, see sequence A000169 of the OEIS\cite{Sloane}:

\begin{cor}
If $(A,\Phi)$ is a finite dimensional $\ell$CEDS, then for any $n\geqslant 1$,
\[\dim(\prelie_\Phi(n))=n^{n-1}\dim(A)^{n-1}.\]
\end{cor}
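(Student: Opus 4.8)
The plan is to identify $\prelie_\Phi(n)$ with the multilinear component of the free $\Phi$-prelie algebra on $n$ generators and then count. I would rely on the general operadic fact that, for any operad $\calP$, the space $\calP(n)$ is canonically isomorphic to the multilinear part of the free $\calP$-algebra on generators $x_1,\ldots,x_n$, namely the subspace spanned by those elements in which each $x_i$ occurs exactly once. Applying this with $\calP=\prelie_\Phi$ and invoking the freeness statement of Theorem \ref{theo2.2} with $\calD=\{x_1,\ldots,x_n\}$, the free $\Phi$-prelie algebra on these generators is $\g_{A,\calD}$, the space of $\calD$-decorated and $A$-typed rooted trees.

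First I would describe this multilinear part concretely. A $\calD$-decorated $A$-typed tree lies in the multilinear component precisely when its decoration map $d\colon V(T)\to \calD$ is a bijection, so that each generator $x_i$ labels exactly one vertex. Such trees have exactly $n$ vertices, hence exactly $n-1$ edges. Thus the multilinear part is spanned by rooted trees on $n$ vertices whose vertices are bijectively labeled by $\{1,\ldots,n\}$ and whose $n-1$ edges are each typed by a vector of $A$, the type being linear in each edge.

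The key point, and the one requiring care, is the automorphism quotient appearing in the formula $\g_{A,\calD}=\bigoplus_{T}\K T\otimes_{\mathrm{Aut}(T)}\bigotimes_{e\in E(T)}A$. For a rooted tree carrying a bijective vertex-labeling by $\{1,\ldots,n\}$, any automorphism must fix every vertex, since it preserves both the root and the pairwise distinct labels; hence $\mathrm{Aut}(T)$ is trivial. Consequently the contribution of each such labeled tree is the full tensor space $\bigotimes_{e\in E(T)}A\cong A^{\otimes(n-1)}$, of dimension $\dim(A)^{n-1}$, with no identifications imposed.

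It then remains to count the labeled rooted trees. By the enumeration recalled just above (sequence A000169, equivalently $n\cdot n^{n-2}$ via Cayley's formula once a root is chosen), there are exactly $n^{n-1}$ rooted trees on the vertex set $\{1,\ldots,n\}$. Summing the contributions over all such trees gives
$$\dim(\prelie_\Phi(n))=n^{n-1}\,\dim(A)^{n-1},$$
as claimed. I expect the only genuine subtlety to be the verification that $\mathrm{Aut}(T)$ is trivial under a bijective decoration, so that the $\otimes_{\mathrm{Aut}(T)}$ in the definition of $\g_{A,\calD}$ imposes no relations on the type space; everything else is the bookkeeping matching generators to vertices and types to edges.
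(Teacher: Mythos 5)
Your argument is correct and follows essentially the same route as the paper: the corollary is read off from the identification of $\prelie_\Phi(n)$ with the span of $A$-typed rooted trees whose vertices are bijectively labeled by $\{1,\ldots,n\}$ (the paper states this directly in its species description of the operad, and invokes the count $n^{n-1}$ of labeled rooted trees from A000169). Your explicit check that bijective labeling forces $\mathrm{Aut}(T)$ to be trivial, so the $\otimes_{\mathrm{Aut}(T)}$ quotient imposes no relations, is a detail the paper leaves implicit but is exactly the right point to verify.
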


\subsection{Links with prelie algebras}

\begin{prop}\label{prop2.4}
Let $A$ be a vector space, $\Phi:A\otimes A\longrightarrow A\otimes A$ be a linear map and $V$ be a vector space with a map 
\[\circ:\left\{\begin{array}{rcl}
A&\longrightarrow&\hom(V\otimes V,V)\\
a&\longrightarrow&\circ_a:\left\{\begin{array}{rcl}
V\otimes V&\longrightarrow&V\\
x\otimes y&\longrightarrow&x\circ_a y.
\end{array}\right.
\end{array}\right.\]
We define a product on $A\otimes V$ by
\begin{align*}
&\forall x,y\in V,\:\forall a,b\in A,&a x\bullet b y&=\sum a'\rightarrow b' x\circ_{a''\triangleright b''} y.
\end{align*}
Then:
\begin{enumerate}
\item If $(A,\Phi)$ is an $\ell$CEDS and $(A,\circ)$ is a $\Phi$-prelie algebra, then $(A\otimes V,\bullet)$ is a prelie algebra.
\item If $(A,\Phi)$ is a non-degenerate $\ell$CEDS and $(A\otimes V,\bullet)$ is a prelie algebra, then $(A,\circ)$ is a $\Phi$-prelie algebra.
\item If, for the free $\Phi$-prelie algebra $\g_{A,\calD}$, where $\calD$ contains at least three elements, $(A\otimes\g_{A,\calD},\circ)$ is a prelie algebra, then $(A,\Phi)$ is an $\ell$CEDS.
\end{enumerate}\end{prop}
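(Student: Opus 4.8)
The plan is to exploit the fact that $\bullet$ is designed so that its left prelie associator $ax\bullet(by\bullet cz)-(ax\bullet by)\bullet cz$ coincides with the trilinear expression $\omega$ met in the proof of Theorem \ref{theo2.2}. Writing, for $\alpha,\beta\in A$ and $x,y,z\in V$,
\[P(\alpha,\beta;x,y,z)=x\circ_\alpha(y\circ_\beta z)-\sum (x\circ_{\alpha''\triangleright\beta''}y)\circ_{\alpha'\rightarrow\beta'}z\]
for the left-hand side of (\ref{EQ18}), the $\Phi$-prelie relation is exactly $P(\alpha,\beta;x,y,z)=P(\beta,\alpha;y,x,z)$. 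Expanding the associator with the defining formula of $\bullet$ and Sweedler's notation (\ref{EQ17}), its first summand $ax\bullet(by\bullet cz)$ carries the type datum $(\Phi\otimes\id)\circ(\id\otimes\Phi)(a\otimes b\otimes c)=\sum f_1\otimes\alpha\otimes\beta$, while the second summand $(ax\bullet by)\bullet cz$ carries $(\Phi\otimes\id)\circ(\id\otimes\tau)\circ(\Phi\otimes\id)(a\otimes b\otimes c)$. This is precisely the bookkeeping already carried out in Theorem \ref{theo2.2}, which I may reuse throughout.

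For (1), the braid equation (\ref{EQ14}) identifies $(\id\otimes\Phi)\circ(\Phi\otimes\id)\circ(\id\otimes\Phi)$ with the second datum, which lets me gather the two summands into $\sum f_1\,P(\alpha,\beta;x,y,z)$ with $\sum f_1\otimes\alpha\otimes\beta=(\Phi\otimes\id)\circ(\id\otimes\Phi)(a\otimes b\otimes c)$. As $(V,\circ)$ is $\Phi$-prelie, (\ref{EQ18}) turns this into $\sum f_1\,P(\beta,\alpha;y,x,z)$. It remains to recognise the latter as the associator of $(by,ax,cz)$, namely $\sum \bar f_1\,P(\bar\alpha,\bar\beta;y,x,z)$ with $\sum\bar f_1\otimes\bar\alpha\otimes\bar\beta=(\Phi\otimes\id)\circ(\id\otimes\Phi)(b\otimes a\otimes c)$; since $P(\beta,\alpha;y,x,z)$ depends linearly on $\alpha\otimes\beta$, this reduces to the operator identity
\[(\id\otimes\tau)\circ(\Phi\otimes\id)\circ(\id\otimes\Phi)=(\Phi\otimes\id)\circ(\id\otimes\Phi)\circ(\tau\otimes\id),\]
which is the linear incarnation of the CEDS axioms (\ref{EQ1}) and (\ref{EQ2}) and which I would derive by composing the braid equation (\ref{EQ14}) with the commutation relation (\ref{EQ15}). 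Granting it, the associator is symmetric in its first two arguments, so $(A\otimes V,\bullet)$ is prelie. Establishing this operator identity, with its attendant Sweedler-index juggling, is the main obstacle of the whole proposition.

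For (2), since $(A,\Phi)$ is an $\ell$CEDS both (\ref{EQ14}) and (\ref{EQ15}) hold, so the operator identity above is available; the reduction of (1) then turns the prelie defect of $\bullet$ into $\sum f_1\,\big(P(\alpha,\beta;x,y,z)-P(\beta,\alpha;y,x,z)\big)$ with $\sum f_1\otimes\alpha\otimes\beta=(\Phi\otimes\id)\circ(\id\otimes\Phi)(a\otimes b\otimes c)$. The prelie hypothesis forces this to vanish for all $a,b,c,x,y,z$. Because $\Phi$ is bijective, so is $\Psi=(\Phi\otimes\id)\circ(\id\otimes\Phi)$ on $A^{\otimes 3}$; the datum $\sum f_1\otimes\alpha\otimes\beta$ therefore runs through all of $A^{\otimes 3}$, and, the dependence in the first slot being free, I may strip it off to obtain $P(\alpha,\beta;x,y,z)=P(\beta,\alpha;y,x,z)$ for all $\alpha,\beta$, that is, (\ref{EQ18}). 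Hence $(V,\circ)$ is $\Phi$-prelie.

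For (3) I cannot assume (\ref{EQ14}) or (\ref{EQ15}), so I proceed directly. I would take $x$ and $y$ to be one-vertex trees decorated by two distinct elements $X,Y\in\calD$ and $z$ a two-vertex tree involving a third decoration $Z$ (available since $\calD$ has at least three elements), then expand the prelie relation for $\bullet$ using the inductive grafting product of Theorem \ref{theo2.2} and identify coefficients. The distinct decorations render the trees occurring on both sides linearly independent, so their coefficients in $A^{\otimes 3}$ must match individually: reading off the coefficient of the caterpillar (ladder) tree yields the braid equation (\ref{EQ14}), and that of the branching tree yields the commutation relation (\ref{EQ15}) (the latter up to the flip of $b$ and $c$), exactly as in the $\Longrightarrow$ direction of Theorem \ref{theo2.2}. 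Therefore $(A,\Phi)$ is an $\ell$CEDS.
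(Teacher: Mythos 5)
Your proof follows the paper's own argument in all three parts: the paper likewise uses the braid equation (\ref{EQ14}) to collapse the associator of $\bullet$ into $\sum f_1\, P(\alpha,\beta;x,y,z)$ with $\sum f_1\otimes\alpha\otimes\beta=(\Phi\otimes\id)\circ(\id\otimes\Phi)(a\otimes b\otimes c)$, then applies (\ref{EQ15}) and (\ref{EQ18}) (in the opposite order to yours, which changes nothing), uses surjectivity of $\Phi$ for (2), and for (3) extracts (\ref{EQ14}) and (\ref{EQ15}) by identifying coefficients.

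Two remarks on the points you leave open. First, the operator identity $(\id\otimes\tau)\circ(\Phi\otimes\id)\circ(\id\otimes\Phi)=(\Phi\otimes\id)\circ(\id\otimes\Phi)\circ(\tau\otimes\id)$, which you call the main obstacle, requires no further computation: composing (\ref{EQ15}) on the left with $\tau\otimes\id$ and on the right with $(\id\otimes\tau)\circ(\tau\otimes\id)$, and pushing the flips past $\Phi$ by naturality of the permutation action, gives exactly this identity; the braid equation plays no role in it. Second, in (3), with a two-vertex $z$ you should not read off the full four-vertex caterpillar: on the $ax\bullet(by\bullet cz)$ side its type datum carries an extra application of $\Phi$ involving the edge type of $z$, so the resulting identity in $A^{\otimes 4}$ does not yield (\ref{EQ14}) without inverting something. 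Either compare the shapes in which the top vertex of $z$ stays pendant at the root (the root with children $x$, $y$ and the pendant vertex gives (\ref{EQ15}); the root with children the pendant vertex and the two-vertex chain carrying $y$ below $x$ gives (\ref{EQ14}), that shape being absent from the terms with $x$ and $y$ exchanged), or simply take $x,y,z$ to be three one-vertex trees with distinct decorations, as the paper in effect does: the three-vertex cherry then gives (\ref{EQ15}) and the three-vertex ladder gives (\ref{EQ14}) with no spectator edge at all. The two-vertex $z$ is only forced in Theorem \ref{theo2.2}, where the tested relation is (\ref{EQ18}) itself and is vacuous on a one-vertex $z$; here the tested relation is the pre-Lie identity for $\bullet$, whose expansion already supplies the needed occurrences of $\Phi$.
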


\begin{proof}
Let $a,b,c\in A$ and $x,y,z\in V$. Then
\begin{align*}
(ax\bullet by)\bullet cz&=\sum\sum (a'\rightarrow b')'\rightarrow c' (x\circ_{a''\triangleright b''} y)\circ_{(a'\rightarrow b')''\triangleright c''} z,\\
ax\bullet (by\bullet cz)&=\sum\sum a'\rightarrow (b'\rightarrow c')' x\circ_{a''\triangleright (b'\rightarrow c')''}(y\circ_{b''\triangleright c''} z),\\
(by\bullet ax)\bullet cz&=\sum\sum (b'\rightarrow a')'\rightarrow c' (y\circ_{b''\triangleright a''} x)\circ_{(b'\rightarrow a')''\triangleright c''} z,\\
by\bullet (ax\bullet cz)&=\sum\sum b'\rightarrow (a'\rightarrow c')'y\circ_{b''\triangleright (a'\rightarrow c')''}(x\circ_{a''\triangleright c''} z).
\end{align*}

1. We obtain
\begin{align*}
&(ax\bullet by)\bullet cz-ax\bullet (by\bullet cz)\\
&=\sum\sum (a'\rightarrow b')'\rightarrow c' (x\circ_{a''\triangleright b''} y)\circ_{(a'\rightarrow b')''\triangleright c''} z\\
&-\sum\sum a'\rightarrow (b'\rightarrow c')' x\circ_{a''\triangleright (b'\rightarrow c')''}(y\circ_{b''\triangleright c''} z)\\
&=\sum\sum\sum a'\rightarrow (b'\rightarrow c')'x\circ_{(a''\triangleright(b'\rightarrow c')'')''\triangleright (b''\triangleright c'')''} y)\circ_{(a''\triangleright(b'\rightarrow c')'')'\rightarrow (b''\triangleright c'')'}z&\mbox{by (\ref{EQ14})}\\
&-\sum\sum a'\rightarrow (b'\rightarrow c')' x\circ_{a''\triangleright (b'\rightarrow c')''}(y\circ_{b''\triangleright c''} z)\\
&=\sum\sum\sum b'\rightarrow (a'\rightarrow c')'(x\circ_{(a''\triangleright c'')''\triangleright (b''\triangleright (a'\rightarrow c')'')''} y)\circ_{(a''\triangleright c'')'\rightarrow (b''\triangleright (a'\rightarrow c')'')'}z&\mbox{by (\ref{EQ14})}\\
&-\sum\sum b'\rightarrow (a'\rightarrow c')' x\circ_{a''\triangleright c''} (y\circ_{b''\triangleright (a'\rightarrow c')''}z)\\
&=\sum\sum\sum b'\rightarrow (a'\rightarrow c')' (y\circ_{(b''\triangleright(a'\rightarrow c')'')''\triangleright (a''\triangleright c'')''} x)\circ_{(b''\triangleright(a'\rightarrow c')'')'\rightarrow (a''\triangleright c'')'}z&\mbox{by (\ref{EQ5})}\\
&-\sum\sum b'\rightarrow (a'\rightarrow c') y\circ_{b''\triangleright(a'\rightarrow c')''} (x\circ_{a''\triangleright c''}z)\\
&=(by\bullet ax)\bullet cz-by\bullet (ax\bullet cz).
\end{align*}
So $(A\otimes V,\bullet)$ is prelie.\\

2. Let $a_1,b_1,c_1\in V$. As $\Phi$ is surjective, there exists $a\otimes b\otimes c\in A^{\otimes 3}$, such that
\[(\Phi\otimes\id)\circ (\id\otimes\Phi)(a\otimes b\otimes c)=a_1\otimes b_1\otimes c_1,\]
or, with Sweedler's notation,
\[\sum\sum a'\rightarrow(b'\rightarrow c')'\otimes a''\triangleright (b'\rightarrow c')''\otimes b''\triangleright c''=a_1\otimes b_1\otimes c_1.\]
The prelie relation gives
\begin{align*}
&(ax\bullet by)\bullet cz-ax\bullet (by\bullet cz)\\
&=\sum\sum a_1 (x\circ_{a''\triangleright b''} y)\circ_{(a'\rightarrow b')''\triangleright c''} z-a_1 x\circ_{b_1} (y\circ_{c_1} z)\\
&=\sum a_1(x\circ_{b_1''\triangleright c_1''} y)\circ_{b_1'\rightarrow c_1'} z-a_1 x\circ_{c_1}(y\circ_{b_1} z)&\mbox{by (\ref{EQ14})},\\
&=(by\bullet ax)\bullet cz-by\bullet (ax\bullet cz)\\
&=\sum a_1(y\circ_{c_1''\triangleright b_1''} x)\circ_{c_1'\rightarrow b_1'} z-a_1 y\circ_{b_1}(x\circ_{c_1} z).
\end{align*}
We deduce (\ref{EQ18}).\\

3. Let $\calD=\{x,y,z\}$ be a set of cardinality 3 and let $V=\g_{A,\calD}$ be the free $\Phi$-prelie algebra generated by $x,y,z$. 
Then, for any $a,b,c\in A$,
\begin{align*}
\sum\sum (a'\rightarrow b')'\rightarrow c' (x\circ_{a''\triangleright b''} y)\circ_{(a'\rightarrow b')''\triangleright c''} z\\
-\sum\sum a'\rightarrow (b'\rightarrow c')' x\circ_{a''\triangleright (b'\rightarrow c')''}(y\circ_{b''\triangleright c''} z)\\
-\sum\sum (b'\rightarrow a')'\rightarrow c' (y\circ_{b''\triangleright a''} x)\circ_{(b'\rightarrow a')''\triangleright c''} z\\
+\sum\sum b'\rightarrow (a'\rightarrow c')'y\circ_{b''\triangleright (a'\rightarrow c')''}(x\circ_{a''\triangleright c''} z)&=0.
\end{align*}
Let $R$ be the space of relations defining $\Phi$-prelie algebras, seen as a subspace of the free operad
generated by $A$, concentrated in degree $2$. We deduce that the following element belongs to $A\otimes R$:
\begin{align*}
U&=\sum\sum (a'\rightarrow b')'\rightarrow c '(x\circ_{a''\triangleright b''} y)\circ_{(a'\rightarrow b')''\triangleright c''} z\\
&-\sum\sum a'\rightarrow (b'\rightarrow c')' x\circ_{a''\triangleright (b'\rightarrow c')''}(y\circ_{b''\triangleright c''} z)\\
&-\sum\sum (b'\rightarrow a')'\rightarrow c '(y\circ_{b''\triangleright a''} x)\circ_{(b'\rightarrow a')''\triangleright c''} z\\
&+\sum\sum b'\rightarrow (a'\rightarrow c')'y\circ_{b''\triangleright (a'\rightarrow c')''}(x\circ_{a''\triangleright c''} z).
\end{align*}
Necessarily, by identification of elements of the form
$x\circ (y\circ z)$, $(x\circ y)\circ z$, $y\circ (x\circ z)$ and $(y\circ x)\circ z$, we obtain relations (\ref{EQ14}) and (\ref{EQ15}). So $(A,\Phi)$ is an $\ell$CEDS.\end{proof}

\subsection{Combinatorial description of the generalized prelie products on trees}

\begin{defi}
Let $(A,\Phi)$ be an $\ell$CEDS, and $\calD$ be a set. Let $T,T'\in\calT_{A,\calD}$, $x$ be a vertex of $T'$ and $a\in A$. We denote by $T\diamond_{\Phi,a}^{(x)} T'$ the element of $\g_{A,\calD}$ obtained by the following process:\\

1. Graft $T$ on the vertex $x$ of $T'$. This process add an edge $e$ which is of type $a$.\\

2. Let $e_1,\ldots,e_k$ be the edges of $T'$ on the path between the root of $T'$ and the vertex $x$, and $a_1,\ldots,a_k$ their type. Compute
\[\sum_{i=1}^pa'_{1,i}\otimes\ldots\otimes a'_{k,i}\otimes a'_i=(\id^{\otimes (k-1)}\otimes\Phi)\circ(\id^{\otimes (k-2)}\otimes\Phi\otimes\id)\circ\ldots\circ (\Phi\otimes\id^{\otimes (k-1)})(a\otimes a_1\otimes\ldots\otimes a_k),\]
and take the sum for $1\leqslant i\leqslant p$ of trees obtained by typing $e$ by $a'_i$ and $e_j$ by $a'_{j,i}$ for any $j$.
\end{defi}

\begin{example}
Let us consider the following trees:
\begin{align*}
T&=\xymatrix{\rond{d_1}},& T'&=\substack{\hspace{5mm}\\ \xymatrix{\rond{d_2}\ar@{-}[d]^c\\ \rond{d_3}\ar@{-}[d]^b\\ \rond{d_4}}}.
\end{align*}
Then
\begin{align*}
T\diamond_{\Phi,a}^{(d_4)} T'&=\hspace{-1mm}\substack{\hspace{5mm}\\
\xymatrix{&\rond{d_2}\ar@{-}[d]^c\\ \rond{d_1}\ar@{-}[rd]_a
&\rond{d_3}\ar@{-}[d]^b\\ &\rond{d_4}}},&
T\diamond_{\Phi,a}^{(d_3)} T'&=\sum\substack{\hspace{5mm}\\ \xymatrix{\rond{d_1}\ar@{-}[rd]_{a''\triangleright b''}
&\rond{d_2}\ar@{-}[d]^c\\ 
&\rond{d_3}\ar@{-}[d]_{a'\rightarrow b'}\\ &\rond{d_4}}},&
T\diamond_{\Phi,a}^{(d_2)} T'&=\sum\sum
\substack{\hspace{8mm}\\ \xymatrix{\rond{d_1}\ar@{-}[d]^{(a''\triangleright b'')''\triangleright c''}\\
\rond{d_2}\ar@{-}[d]^{(a''\triangleright b'')'\rightarrow c'}
\\ \rond{d_3}\ar@{-}[d]^{a'\rightarrow b'}\\ \rond{d_4}}}.
\end{align*}
\end{example}

\begin{prop}\label{prop2.6}
Let $T,T'\in\calT_{A,\calD}$ and $a\in A$. Then, in $\g_{A,\calD}$,
\[ T\circ_a T'=\sum_{x\in V(T)'} T\diamond_{\Phi,a}^{(x)} T'.\]
\end{prop}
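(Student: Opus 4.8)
The plan is to prove Proposition \ref{prop2.6} by induction on the number of vertices of $T'$, following the inductive definition of $\circ_a$ given in Theorem \ref{theo2.2}. Write $T'=B_d(a_1T_1,\ldots,a_kT_k)$, so the root of $T'$ is decorated by $d$ and the subtrees hanging from the root are $T_1,\ldots,T_k$, attached by edges of types $a_1,\ldots,a_k$. The inductive definition gives
\[
T\circ_a T'=B_d(aT,a_1T_1,\ldots,a_kT_k)
+\sum_{i=1}^k\sum B_d\bigl(a_1T_1,\ldots,(a'\rightarrow a_i')\,T\circ_{a''\triangleright a_i''}T_i,\ldots,a_kT_k\bigr).
\]
On the right-hand side of the proposition, the sum over $x\in V(T')$ splits into the term $x=\text{root}$ and the terms where $x$ lies in one of the subtrees $T_i$. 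I would match these two groupings term by term.

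First I would treat the root term. Grafting $T$ at the root of $T'$ adds an edge $e$ of type $a$, and the path from the root of $T'$ to the root itself is empty ($k=0$ edges in the sense of the diamond definition), so step 2 leaves the type unchanged: $T\diamond_{\Phi,a}^{(\text{root})}T'=B_d(aT,a_1T_1,\ldots,a_kT_k)$. This matches the first summand of the inductive formula exactly. Next I would treat a vertex $x$ lying inside the subtree $T_i$. Here the key observation is that the path from the root of $T'$ to $x$ consists of the single edge $a_i$ (from the root down into $T_i$) followed by the path inside $T_i$ from its root to $x$. The diamond operation processes this concatenated path by applying the staircase of $\Phi$'s. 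I would argue that this staircase factors: applying $\Phi$ across the top edge $a$ and the edge $a_i$ first produces the Sweedler components $(a'\rightarrow a_i')$ and $(a''\triangleright a_i'')$, and then the remaining $\Phi$'s act precisely as the diamond operation $\diamond_{\Phi,\,a''\triangleright a_i''}^{(x)}$ performed inside $T_i$, with the freshly produced type $a''\triangleright a_i''$ playing the role of the new grafting type. Summing over $x\in V(T_i)$ and invoking the induction hypothesis on $T_i$ then yields $(a'\rightarrow a_i')\,T\circ_{a''\triangleright a_i''}T_i$, grafted into the $i$-th slot, which is exactly the $i$-th summand of the inductive formula.

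The heart of the argument, and the place I expect the real work, is verifying that the associativity of the $\Phi$-staircase lets the topmost $\Phi$ peel off cleanly so that the rest of the computation is genuinely the diamond operation inside $T_i$ with the updated scalar. Concretely, one must check that
\[
(\id^{\otimes(k-1)}\otimes\Phi)\circ\cdots\circ(\Phi\otimes\id^{\otimes(k-1)})
\]
applied to $a\otimes a_i\otimes(\text{path inside }T_i)$ agrees, after extracting the first application of $\Phi$ to $a\otimes a_i$, with the shorter staircase applied to $(a''\triangleright a_i'')\otimes(\text{path inside }T_i)$. This is a purely formal compatibility of the iterated maps and does not use the $\ell$CEDS axioms themselves—those are only needed to guarantee that $\circ_a$ is well defined and prelie, which is Theorem \ref{theo2.2} and may be assumed. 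I would phrase this peeling-off as a short lemma or an inline remark about the recursive structure of the staircase, then conclude the induction. The base case $T'=\xymatrix{\rond{d}}$ (one vertex) is immediate, since both sides reduce to $B_d(aT)$.
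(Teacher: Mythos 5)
Your proposal is correct and follows essentially the same route as the paper: an induction on the number of vertices of $T'$, splitting the sum over $V(T')$ into the root term and the vertices lying in each subtree $T_i$, and identifying the inner summand of the inductive definition with $T\diamond_{\Phi,a}^{(x)}T'$ by peeling off the first application of $\Phi$ to $a\otimes a_i$. The "staircase factorization" you single out as the heart of the argument is exactly the identification the paper makes (silently, via an underbrace) in its own proof, so nothing further is needed.
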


\begin{proof}
We proceed by induction on the number $n$ of vertices of $T'$. If $n=1$ let us put $T'=\xymatrix{\rond{d}}$. Then
\[T\circ_a T'=B_d(a T)=T\diamond_{\Phi,a}^{(\mathrm{root}(T'))} T'.\]
Let us assume the result at all ranks $<n$. We put $T'=B_d(a_1 T_1,\ldots, a_k T_k)$. Applying the induction hypothesis to $T_1,\ldots, T_k$, we obtain
\begin{align*}
T\circ_a T'&=B_d(aT,a_1 T_1,\ldots, a_k T_k)+\sum_{i=1}^k\sum_{x\in V(T_i)}\underbrace{\sum B_d(a_1 T_1,\ldots, (a'\rightarrow a_i') T\diamond_{\Phi, a''\triangleright a_i''}^{(x)} T_i,\ldots, a_k T_k)}_{T\diamond_{\Phi,a}^{(x)} T'}\\
&=T\diamond_{\Phi,a}^{(\mathrm{root}(T'))} T'+\sum_{i=1}^k\sum_{x\in V(T_i)} T\diamond_{\Phi,a}^{(x)} T'\\
&=\sum_{x\in V(T')}T\diamond_{\Phi,a}^{(x)} T'.\qedhere
\end{align*}\end{proof}

\begin{cor}
Let $(A,\Phi)$ be an $\ell$CEDS and $\calD$ a nonempty set. We denote by $\g'_{A,\calD}$ the prelie subalgebra of $A\otimes\g_{A,\calD}$ generated by the elements $a\xymatrix{\rond{X}}$, with $a\in A$ and $a\in\calD$.
\begin{enumerate}
\item The following conditions are equivalent:
\begin{itemize}
\item $\g'_{A,\calD}=A\otimes\g_{A,\calD}$.
\item $\Phi$ is surjective.
\end{itemize}
\item The following conditions are equivalent:
\begin{itemize}
\item The prelie algebra $\g'_{A,\calD}$ is free.
\item $\Phi$ is injective.
\end{itemize}
\end{enumerate}
\end{cor}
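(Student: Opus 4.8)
The plan is to reduce both equivalences to properties of a single morphism. By Proposition \ref{prop2.4}, $A\otimes\g_{A,\calD}$ is a prelie algebra for $\bullet$, graded by the number of vertices, and its degree-one part $(A\otimes\g_{A,\calD})_1=A\otimes\K\calD$ is exactly the span $W$ of the generators $a\,\xymatrix{\rond X}$. Thus $\g'_{A,\calD}$ is the graded prelie subalgebra generated by $W$, and I would work with the canonical graded surjection $\Theta\colon\free(W)\longrightarrow\g'_{A,\calD}\subseteq A\otimes\g_{A,\calD}$ from the free prelie algebra on $W$ (the identity on $W$). Then $\g'_{A,\calD}=A\otimes\g_{A,\calD}$ holds if and only if $\Theta$ is surjective. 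Moreover $\g'_{A,\calD}$ is generated in degree one, so its space of indecomposables reduces to $W$; hence if $\g'_{A,\calD}$ is free it is free on $W$ and $\Theta$ is an isomorphism, and conversely. So $\g'_{A,\calD}$ is free if and only if $\Theta$ is injective, and everything becomes a comparison between the surjectivity and injectivity of the graded maps $\Theta_n$ and those of $\Phi$.

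Next I would match dimensions blockwise. For a fixed rooted shape with $n$ vertices and a fixed $\calD$-decoration of its vertices, the corresponding block of $\free(W)_n$ carries one copy of $A$ per vertex (the $A$-part of the $W$-decoration) and is $A^{\otimes n}$, while the corresponding block of $(A\otimes\g_{A,\calD})_n$ carries one copy of $A$ on the extra tensor factor and one on each of the $n-1$ edges, so is again $A^{\otimes n}$. In degree $2$ there is a single shape and, by a direct computation, $\Theta_2$ is blockwise the map $\Phi$ itself. Hence $\Theta$ surjective forces $\Phi$ surjective, and $\Theta$ injective forces $\Phi$ injective: these yield the ``only if'' directions of both statements at once.

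For the converse directions I would establish that, in these bases, each $\Theta_n$ is block upper triangular for a suitable total order on decorated $n$-vertex shapes (the Chapoton--Livernet order \cite{ChapotonLivernet}, refined by the degree of the root and recursively on the subtrees hanging from it), the diagonal block indexed by a shape $T$ being a composite of $n-1$ linear maps, each of the form $\id^{\otimes i}\otimes\Phi\otimes\id^{\otimes j}$. This would be proved by induction on $n$: writing a basis tree of $\free(W)$ as a prelie product of smaller trees and evaluating $\Theta$ by means of Proposition \ref{prop2.6}, the grafting at the root reproduces the shape $T$ and introduces exactly one new factor $\Phi$, namely the map $(\id^{\otimes(k-1)}\otimes\Phi)\circ\cdots\circ(\Phi\otimes\id^{\otimes(k-1)})$ of the definition of $\diamond_{\Phi,a}^{(x)}$, whereas the graftings at deeper vertices produce only shapes strictly smaller for the chosen order. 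Since over a field tensoring with an identity and composing both preserve surjectivity and injectivity, each diagonal block is surjective (resp. injective) precisely when $\Phi$ is, and a block-triangular map with surjective (resp. injective) diagonal blocks is itself surjective (resp. injective). Thus $\Phi$ surjective makes every $\Theta_n$, and hence $\Theta$, surjective, giving $\g'_{A,\calD}=A\otimes\g_{A,\calD}$, while $\Phi$ injective makes $\Theta$ injective, giving the freeness of $\g'_{A,\calD}$.

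The main obstacle is the triangularity statement itself: fixing the order on decorated trees and verifying that the deeper graftings of Proposition \ref{prop2.6} land strictly below the diagonal, while the root grafting produces exactly the claimed $\Phi$-composite on the diagonal. This is the same free pre-Lie combinatorics already invoked for the freeness half of Theorem \ref{theo2.2}, the only new feature being the $\Phi$-twist carried by the grafting, which is precisely what couples the surjectivity and injectivity of $\Theta$ to those of $\Phi$.
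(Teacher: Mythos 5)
Your proposal is correct and follows essentially the same route as the paper: the forward implications come from the same degree-two computation (the block of $\Theta_2$ is exactly $\Phi$), and the converse implications rest on the same triangularity of the canonical morphism from the free pre-Lie algebra on $A\otimes\K\calD$, which the paper runs as an induction on the fertility of the root for surjectivity and as a shape-preserving (in fact diagonal) description of $\psi$, with types given by iterated applications of $\Phi$, for injectivity. The only detail to tidy is that the single new factor of $\Phi$ contributed by a root grafting comes from the $A$-component of the product $\bullet$ of Proposition \ref{prop2.4} (the pair of decorations $(a,b)$ is replaced by $\Phi(a\otimes b)$, giving the new $A$-component and the type of the new edge), not from the composite in the definition of $\diamond_{\Phi,a}^{(x)}$, which is empty for a grafting at the root.
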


\begin{proof}
1. $\Longrightarrow$. Let $X\in\calD$ and $a,b\in A$. Then
\[a\xymatrix{\rond{X}}\circ b\xymatrix{\rond{X}}=\sum a'\rightarrow b'\substack{\hspace{3mm}\\ \xymatrix{\rond{X}\ar@{-}[d]^{a''\triangleright b''}\\ \rond{X}}}.\]
Let us assume that $\g'_{A,\calD}=A\otimes\g_{A,\calD}$. Let $a,b\in A$. Because of the graduation by the number of vertices, there exist elements $c_i,d_i\in A$ such that
\[\sum_{i=1}^k c_i\xymatrix{\rond{X}}\circ d_i\xymatrix{\rond{X}}=a\substack{\hspace{3mm}\\ \xymatrix{\rond{X}\ar@{-}[d]^b\\ \rond{X}}}.\]
By the preceding computation,
\[\sum_{i=1}^k\sum c_i'\rightarrow d_i'\substack{\hspace{3mm}\\ \xymatrix{\rond{X}\ar@{-}[d]^{c_i''\triangleright d_i''}\\ \rond{X}}}=a\substack{\hspace{3mm}\\ \xymatrix{\rond{X}\ar@{-}[d]^b\\ \rond{X}}}.\]
Consequently,
\[\Phi\left(\sum_{i=1}^k c_i\otimes d_i\right)=a\otimes b,\]
so $\Phi$ is surjective.\\

1. $\Longleftarrow$. Let us denote by $\g_{A\otimes\K\calD}$ be the prelie algebra of rooted trees decorated by $A\otimes\K\calD$, the decorations being linear in each vertex, and let $\psi:\g_{A\otimes\calD}\longrightarrow A\otimes\g_{A,\calD}$
be the unique prelie algebra morphism sending the tree with one vertex decorated by $a\otimes d$, with $a\in A$ and $d\in\calD$, to $a\xymatrix{\rond{d}}$. We plan to prove that $\psi$ is surjective. 
Let us prove that for any $a\in A$, any tree $T$, $a T\in\im(\psi)$. We proceed by induction on the number $n$ of vertices of $T$. If $n=1$, it is obvious. 
Otherwise, let us put $T=B_d(a_1T_1,\ldots,a_kT_k)$. We proceed by induction on $k$. This is obvious if $k=0$. Otherwise, let us put $x=T_1$ and $Y=B_d(a_2 T_2,\ldots,a_k T_k)$. 
By the induction hypothesis on the number of vertices, $x$ and $y$ belong to $\im(\psi)$. As $\Phi$ is surjective, let us choose $\sum c_i\otimes d_i\in A\otimes A$ such that $\Phi(\sum c_i\otimes d_i)=a\otimes a_1$. Then
\[\sum c_i x\circ d_i y=T+\mbox{a sum of trees with $n$ vertices and with $k-1$ trees born from the root}.\]
By the induction hypothesis on $k$, this sum belongs to $\im(\psi)$, so $T\in\im(\psi)$.\\

2. $\Longrightarrow$. If $\g'_{A,\calD}$ is free, then because of the graduation, it is freely generated by its elements $a\xymatrix{\rond{X}}$. Let us assume that $\Phi(\sum a_i\otimes b_i)=0$. Then, for any $d\in\calD$,
\begin{align*}
\sum a_i\otimes\xymatrix{\rond{d}}\circ b_i\xymatrix{\rond{d}}=\sum\sum a_i'\rightarrow b_i'\substack{\hspace{3mm}\\ \xymatrix{\rond{d}\ar@{-}[d]^{a_i''\triangleright b_i''}\\ \rond{d}}}=0.
\end{align*}
By freeness of $A\otimes\g_{A,\calD}$, $\sum a_i\otimes b_i=0$, so $\phi$ is injective.\\

2. $\Longleftarrow$. We now show that $\psi$ is injective. 
The combinatorial description of the generalized prelie product implies by a direct induction that for any tree $T$ which vertices are decorated by $A\otimes\K\calD$, $\psi(T)$ is a sum of terms $a\otimes T'$, where $T'$ is a tree with the same form as $T$, 
the decorations by $\calD$ being conserved, and $a$ and the types of the edges of $T'$ being obtained from the decoration by elements of $A$ by iterated applications of $\Phi$, depending uniquely of the form of $T$. 
As $\Phi$ is injective, $\psi$ is injective.\end{proof}

\subsection{Combinatorial description of the underlying operad}

Let $(A,\Phi)$ be an $\ell$CEDS. The description of the free $\Phi$-prelie algebras induce a description of this operad. We shall use the formalism of operads in the category of species. 
For any set $\calD$, $\prelie_\Phi[\calD]$ is the space generated by the set of trees $\calT_A[\calD]$ $A$-typed, the types being linear on each edge, and which set of vertices is $\calD$. The operadic composition is given in the following way:
if $\calD$ and $\calD'$ are sets, $d\in\calD$, $T\in\calT_A[\calD]$ and $T'\in\calT_A[\calD']$, let us consider the unique $\Phi$-prelie algebra morphism $\psi:\g_{A,\calD}\longrightarrow\g_{A,\calD\sqcup\calD'\setminus\{d\}}$ 
sending $\xymatrix{\rond{X}}$ on itself if $X\neq d$ and on $T'$ otherwise; then $T\circ_d T'=\psi(T)$. From the combinatorial description of the prelie products with graftings, we deduce the following description of this operadic composition:
\begin{itemize}
\item Let us denote by $T_1,\ldots,T_k$ be the subtrees of $T$ born from the vertex $d$ of $T$, and by $a_i$ the type of the edge relating this vertex to the root of $T_i$ for any $i$.
\item Denote $T_0=T\setminus (T_1\sqcup\ldots\sqcup T_k)$ the subtree of $T$ obtained by deleting the subtrees $T_1,\ldots,T_k$ and by $T_0*_d T'$ the tree obtained by identification of the vertex $d$ of $T$ with the root of $T'$.
\end{itemize}
Then
\[T\circ_d T'=\sum_{d_1,\ldots d_k\in V(T')} T_1\diamond_{\Phi,a_1}^{(d_1)}(\ldots T_k\diamond_{\Phi,a_k}^{(d_k)}(T_0 *_d T')\ldots).\]

The product $\circ_a\in\prelie_\Phi(2)$ is represented by the tree
\[\circ_a=\substack{\hspace{3mm}\\ \xymatrix{\rond{1}\ar@{-}[d]^a\\ \rond{2}}}.\]
The unit is the tree $I=\xymatrix{\rond{1}}\in\prelie_\Phi(1)$.

\begin{example}
If $x,y,z\in\calD$, $u,v,w\in\calD'$, and $a,b,c,d\in A$,
\begin{align*}
\substack{\hspace{3mm}\\ \xymatrix{\rond{y}\ar@{-}_a[rd]&\rond{z}\ar@{-}^b[d]\\ &\rond{x}}}\circ_x\substack{\hspace{5mm}\\ \xymatrix{\rond{w}\ar@{-}[d]^d\\ \rond{v}\ar@{-}[d]^c\\ \rond{u}}}
&=\substack{\hspace{5mm}\\ \xymatrix{&\rond{w}\ar@{-}[d]^d&\\ \rond{y}\ar@{-}[rd]_a&\rond{v}\ar@{-}[d]^c&\rond{z}\ar@{-}[ld]^b\\ &\rond{u}&}}+\sum\substack{\hspace{5mm}\\ \xymatrix{&\rond{w}\ar@{-}[d]^d&\rond{z}\ar@{-}[ld]^{b''\triangleright c''}\\ 
\rond{y}\ar@{-}[rd]_a&\rond{v}\ar@{-}[d]^{b'\rightarrow c'}&\\ &\rond{u}&}}
+\sum\sum\substack{\hspace{8mm}\\ \xymatrix{&\rond{z}\ar@{-}[d]^{(b''\triangleright c'')''\triangleright d''}&\\
&\rond{w}\ar@{-}[d]^{(b''\triangleright c'')'\rightarrow d'}&\\ \rond{y}\ar@{-}[rd]_a&\rond{v}\ar@{-}[d]^{b'\rightarrow c'}
&\\ &\rond{u}&}}\\
&+\sum\substack{\hspace{5mm}\\ \xymatrix{\rond{y}\ar@{-}[rd]_{a''\triangleright c''}&\rond{w}\ar@{-}[d]^d&\\ 
&\rond{v}\ar@{-}[d]_{a'\rightarrow c'}&\rond{z}\ar@{-}[ld]^b\\ &\rond{u}&}}
+\sum\sum\substack{\hspace{5mm}\\ \xymatrix{\rond{y}\ar@{-}[rd]_{a''\triangleright (b'\rightarrow c')''}&\rond{w}\ar@{-}[d]^d
&\rond{z}\ar@{-}[ld]^{b''\triangleright c''}\\ 
&\rond{v}\ar@{-}[d]^{a'\rightarrow (b'\rightarrow c')'}&\\ &\rond{u}&}}\\
&+\sum\sum\substack{\hspace{8mm}\\ \xymatrix{&\rond{z}\ar@{-}[d]^{(b''\triangleright c'')''\triangleright d''}&\\
\rond{y}\ar@{-}[rd]_{a''\triangleright (b'\rightarrow c')''}&\rond{w}\ar@{-}[d]^{(b''\triangleright c'')'\rightarrow d'}&\\ 
&\rond{v}\ar@{-}[d]^{a'\rightarrow (b'\rightarrow c')'}&\\ &\rond{u}&}}
+\sum\sum\hspace{-6mm}\substack{\hspace{5mm}\\ \xymatrix{&\rond{y}\ar@{-}[d]^{(a''\triangleright c'')''\triangleright d''}&\\
&\rond{w}\ar@{-}[d]^{(a''\triangleright c'')'\rightarrow d'}&\\ 
&\rond{v}\ar@{-}[d]_{a'\rightarrow c'}&\rond{z}\ar@{-}[ld]^b\\ &\rond{u}&}}\\
&+\sum\sum\sum\substack{\hspace{11mm}\\ \xymatrix{&\rond{y}\ar@{-}[d]^{(a''\triangleright (b'\rightarrow c')'')''\triangleright d''}&\\
&\rond{w}\ar@{-}[d]_{(a''\triangleright (b'\rightarrow c')'')'\rightarrow d'}&\rond{z}\ar@{-}[ld]^{b''\triangleright c''}\\ 
&\rond{v}\ar@{-}[d]^{a'\rightarrow(b'\rightarrow c')'}&\\ &\rond{u}&}}\\
&+\sum\sum\sum\sum\hspace{-5mm}\substack{\hspace{11mm}\\ \xymatrix{
\rond{y}\ar@{-}[rd]_{(a''\triangleright(b'\rightarrow c')'')''\triangleright ((b''\triangleright c'')'\rightarrow d')''}
&\rond{z}\ar@{-}[d]^{(b''\triangleright c'')''\triangleright d''}&\\
&\rond{w}\ar@{-}[d]_{(a''\triangleright(b'\rightarrow c')'')'\rightarrow ((b''\triangleright c'')'\rightarrow d')'}&\\ 
&\rond{v}\ar@{-}[d]_{a'\rightarrow (b'\rightarrow c')'}&\\ &\rond{u}&}}\hspace{-1cm}.
\end{align*}\end{example}

\begin{remark}
If $A=\K$, we can forget about the types of the edges and we obtain operadic structures on rooted trees. If $\Phi(1\otimes 1)=1\otimes 1$, we recover the description of the operad of prelie algebras as done in\cite{Chapoton2001}.
If $\Phi(1\otimes 1)=0$, we recover the description of the operad of NAP algebras of\cite{Livernet2006}. 
\end{remark}

\subsection{Pre-Lie and non-associative permutative products}

We now look for prelie and non-associative permutative products in $\Phi$-prelie algebras. 
More formally, we look for operad morphisms from the operad $\prelie$ of prelie algebras or from the operad $\NAP$ of non-associative permutative (briefly, NAP) algebras\cite{Livernet2006} to the operad $\prelie_\Phi$. 
Recall that the operad $\prelie$ is generated by $\circ\in\prelie(2)$, with the relation
\[p\circ (I,p)-p\circ(p,I)=p\circ (I,p)^{(12)}-p\circ(p,I)^{(12)},\]
whereas the operad $\NAP$ is generated by $\circ\in\NAP(2)$, with the relation
\[p\circ (I,p)=p\circ (I,p)^{(12)}.\]

Let us first recall this definition of\cite{Foissy46}:

\begin{defi}
Let $(A,\Phi)$ be an $\ell$EAS, $\lambda\in\K$ and $a\in A$. We shall say that $a$ is a special vector of $(A,\Phi)$ of eigenvalue $\lambda$ if $\Phi(a\otimes a)=\lambda a\otimes a$. 
\end{defi}

We prove in\cite[Lemma 4.4]{Foissy46} that if $(A,\Phi)$ has a nonzero special vector of eigenvalue $\lambda$, then $\lambda=0$ or $1$. 

\begin{prop}\label{prop2.9}
Let $(A,\Phi)$ be an $\ell$CEDS.
\begin{enumerate}
\item The prelie products of $\prelie_\Phi$ are the products $\circ_a$, where $a$ is a special vector of $(A,\Phi)$ of eigenvalue $1$.
\item The NAP products of $\prelie_\Phi$ are the products $\circ_a$, where $a$ is a special vector of $(A,\Phi)$ of eigenvalue $0$.
\end{enumerate}
\end{prop}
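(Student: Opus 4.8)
The plan is to identify pre-Lie (resp. NAP) products with operad morphisms $\prelie\longrightarrow\prelie_\Phi$ (resp. $\NAP\longrightarrow\prelie_\Phi$), hence with elements $p\in\prelie_\Phi(2)$ satisfying the corresponding generating relation, and then to test that relation inside the free $\Phi$-prelie algebra $\g_{A,\calD}$ with $\calD=\{x,y,z\}$ provided by Theorem \ref{theo2.2}. Since $\prelie_\Phi(2)$ is spanned by the operations $\circ_a$ and their flips $\circ_a^{(12)}$ for $a\in A$, I would first handle a product of the form $\circ_a$ and afterwards rule out a nonzero flip component. Throughout I write $u/v/w$ for the ladder with $u$ at the top and root $w$, and I use that, by Proposition \ref{prop2.6}, $(x\circ_c y)\circ_d z$ is the ladder $x/y/z$ whose lower edge is typed $d$ and upper edge typed $c$.

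\textbf{Reduction for $\circ_a$.} Setting $b=a$ in the defining identity (\ref{EQ18}), which holds in every $\Phi$-prelie algebra, and writing $\Phi(a\otimes a)=\sum_i f_i\otimes s_i$, the term $\sum(x\circ_{a''\triangleright a''}y)\circ_{a'\rightarrow a'}z$ becomes $\sum_i(x\circ_{s_i}y)\circ_{f_i}z$. Now $\circ_a$ is a pre-Lie product exactly when
\[x\circ_a(y\circ_a z)-(x\circ_a y)\circ_a z=y\circ_a(x\circ_a z)-(y\circ_a x)\circ_a z\]
holds in $\g_{A,\{x,y,z\}}$; subtracting (\ref{EQ18}) with $b=a$ from this leaves
\[\sum_i(x\circ_{s_i}y)\circ_{f_i}z-(x\circ_a y)\circ_a z=\sum_i(y\circ_{s_i}x)\circ_{f_i}z-(y\circ_a x)\circ_a z.\]
The left-hand side lies in the summand of $\g_{A,\{x,y,z\}}$ indexed by the ladder $x/y/z$, the right-hand side in that of the ladder $y/x/z$. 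As $x,y,z$ are distinct, these decorated trees are non-isomorphic, so by the direct-sum decomposition of $\g_{A,\calD}$ both sides vanish independently; that summand being canonically $A\otimes A$ (lower edge $\otimes$ upper edge), the vanishing of the left-hand side reads exactly $\Phi(a\otimes a)-a\otimes a=0$. Hence $\circ_a$ is pre-Lie iff $a$ is a special vector of eigenvalue $1$. For the NAP relation $x\circ_a(y\circ_a z)=y\circ_a(x\circ_a z)$ the comparison with (\ref{EQ18}) (case $b=a$) gives $\sum_i(x\circ_{s_i}y)\circ_{f_i}z=\sum_i(y\circ_{s_i}x)\circ_{f_i}z$, forcing $\Phi(a\otimes a)=0$, i.e. eigenvalue $0$.

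\textbf{Converse.} Here I would simply substitute the eigenvalue condition into (\ref{EQ18}): if $\Phi(a\otimes a)=a\otimes a$ then $\sum_i(x\circ_{s_i}y)\circ_{f_i}z=(x\circ_a y)\circ_a z$, so (\ref{EQ18}) collapses to the pre-Lie identity; if $\Phi(a\otimes a)=0$ it collapses to the NAP identity. This uses no freeness and holds in any $\Phi$-prelie algebra.

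\textbf{Completeness (the main obstacle).} The delicate part is to show there is no further product, i.e. that a general $p=\circ_a+\circ_b^{(12)}$ satisfying the relation has $b=0$. I would expand the associator $p(x,p(y,z))-p(p(x,y),z)$ in $\g_{A,\{x,y,z\}}$ by the grafting rule of Proposition \ref{prop2.6} and compare, coefficient by coefficient on each typed ladder, with its $(12)$-transpose $p(y,p(x,z))-p(p(y,x),z)$. For instance the ladder $y/z/x$ forces $\Phi(a\otimes b)=b\otimes a$, the ladder $x/y/z$ forces $\Phi(a\otimes a)=a\otimes(a-b)$, and the ladder $z/y/x$ forces $\Phi\big(b\otimes(b-a)\big)=b\otimes b$; together with the companion root-$y$ ladders these form a linear system whose only solution compatible with \emph{all} components is $b=0$, which then returns the eigenvalue-$1$ condition $\Phi(a\otimes a)=a\otimes a$ (the purely symmetric corolla components impose no constraint). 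Tracking precisely which ladder receives which $\Phi$-corrected grafting, and checking the $b$-linear cross terms cancel only when $b=0$, is where all the bookkeeping lies; throughout one relies on $\g_{A,\{x,y,z\}}$ being genuinely free, which by Theorem \ref{theo2.2} is exactly where the $\ell$CEDS axioms (\ref{EQ14}) and (\ref{EQ15}) on $(A,\Phi)$ are used.
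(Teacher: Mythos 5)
Your reduction (identifying pre-Lie/NAP products with elements $p=\circ_a+\circ_b^{(12)}$ of $\prelie_\Phi(2)$ and testing the generating relation in the free $\Phi$-prelie algebra on $\{x,y,z\}$) is exactly the paper's strategy, and your treatment of the case $p=\circ_a$, in both directions, is correct and matches the paper's comparison on the ladder $x/y/z$.

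The gap is in the completeness step, and it sits precisely in your parenthetical claim that the corolla components impose no constraint. Only the corolla with root $z$ and both children typed $a$ is constraint-free. The corolla with root $x$ and children $y,z$ both typed $b$, which arises from $z\circ_b(y\circ_b x)$ inside $p(p(x,y),z)$, occurs in the associator $p(x,p(y,z))-p(p(x,y),z)$ with coefficient $-\,b\otimes b$, whereas its $(12)$-image, the corolla with root $y$ and children $x,z$, has net coefficient $0$ there (the contributions $+\,a\otimes b$ from $x\circ_a(z\circ_b y)$ and $-\,a\otimes b$ from $z\circ_b(x\circ_a y)$ cancel). Comparing these two corolla components across the pre-Lie relation gives $b\otimes b=0$, hence $b=0$, immediately --- this is exactly the argument the paper uses. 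Having discarded the corollas, you are left with only the three ladder equations $\Phi(a\otimes a)=a\otimes(a-b)$, $\Phi(a\otimes b)=b\otimes a$ and $\Phi(b\otimes(b-a))=b\otimes b$, an underdetermined system in the four unknowns $\Phi(a\otimes a)$, $\Phi(a\otimes b)$, $\Phi(b\otimes a)$, $\Phi(b\otimes b)$ which does not visibly force $b=0$; the assertion that its only admissible solution has $b=0$ is the whole point at issue and is given no proof (it would require invoking the $\ell$CEDS axioms in some nontrivial way). Reinstating the corolla comparison closes the gap. Note also that you do not carry out the completeness step for the NAP case at all; there a ladder comparison does suffice, since the coefficient of the ladder $z/y/x$ in $p(x,p(y,z))$ is $b\otimes b$ while it is $0$ in $p(y,p(x,z))$, which again yields $b=0$.
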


\begin{proof} We use the description of $\prelie_\Phi$ in terms of typed rooted trees. From the combinatorial description of the operad $\prelie_\Phi$, for any $a,b\in A$,
\begin{align*}
\circ_a\circ (I,\circ_b)&=\substack{\hspace{3mm}\\ \xymatrix{\rond{1}\ar@{-}[dr]_a&\rond{2}\ar@{-}[d]^b\\&\rond{3}}}+\sum\substack{\hspace{5mm}\\ \xymatrix{\rond{1}\ar@{-}[d]^{a''\triangleright b''}\\
\rond{2}\ar@{-}[d]^{a'\rightarrow b'}\\ \rond{3}}},&
\circ_a\circ (\circ_b,I)&=\substack{\hspace{5mm}\\ \xymatrix{\rond{1}\ar@{-}[d]^a\\ \rond{2}\ar@{-}[d]^b\\ \rond{3}}}.
\end{align*}
Let $p=\circ_a+\circ_b^{op}\in\prelie_\Phi(2)$, with $a,b\in A$. Then
\begin{align*}
p\circ (I,p)&=\substack{\hspace{3mm}\\ \xymatrix{\rond{1}\ar@{-}[rd]_{a} &\rond{2}\ar@{-}[d]^{a}\\ &\rond{3}}}
+\sum\substack{\hspace{5mm}\\
\xymatrix{\rond{1}\ar@{-}[d]^{a''\triangleright a''}\\ \rond{2}\ar@{-}[d]^{a'\rightarrow a'}\\ \rond{3}}}
+\substack{\hspace{3mm}\\ \xymatrix{\rond{1}\ar@{-}[rd]_{a} &\rond{3}\ar@{-}[d]^{b}\\ &\rond{2}}}
+\sum\substack{\hspace{5mm}\\ 
\xymatrix{\rond{1}\ar@{-}[d]^{a''\triangleright b''}\\ \rond{3}\ar@{-}[d]^{a'\rightarrow b'}\\ \rond{2}}}
+\substack{\hspace{5mm}\\ \xymatrix{\rond{2}\ar@{-}[d]^{a}\\ \rond{3}\ar@{-}[d]^{b}\\ \rond{1}}}
+\substack{\hspace{5mm}\\ \xymatrix{\rond{3}\ar@{-}[d]^{b}\\ \rond{2}\ar@{-}[d]^{b}\\ \rond{1}}},\\
p\circ (p,I)&=\substack{\hspace{5mm}\\ \xymatrix{\rond{1}\ar@{-}[d]^{a}\\ \rond{2}\ar@{-}[d]^{a}\\ \rond{3}}}
+\substack{\hspace{5mm}\\ \xymatrix{\rond{2}\ar@{-}[d]^{b}\\ \rond{1}\ar@{-}[d]^{a}\\ \rond{3}}}
+\substack{\hspace{3mm}\\ \xymatrix{\rond{1}\ar@{-}[rd]_{a} &\rond{3}\ar@{-}[d]^{b}\\ &\rond{2}}}
+\sum\substack{\hspace{5mm}\\ 
\xymatrix{\rond{3}\ar@{-}[d]^{b''\triangleright a''}\\ \rond{1}\ar@{-}[d]^{b'\rightarrow a'}\\ \rond{2}}}
+\substack{\hspace{3mm}\\ \xymatrix{\rond{2}\ar@{-}[rd]_{b} &\rond{3}\ar@{-}[d]^{b}\\ &\rond{1}}}
+\sum\substack{\hspace{5mm}\\
\xymatrix{\rond{3}\ar@{-}[d]^{b''\triangleright b''}\\ \rond{2}\ar@{-}[d]^{b'\rightarrow b'}\\ \rond{1}}}.
\end{align*}

1. In the prelie relation $p\circ (I,p)-p\circ(p,I)=p\circ (I,p)^{(12)}-p\circ(p,I)^{(12)}$, let us identify the type of the edges of the trees of the form
\begin{align*}
&\substack{\hspace{3mm}\\ \xymatrix{\rond{1}\ar@{-}[rd] &\rond{3}\ar@{-}[d]\\ &\rond{2}}}&\mbox{and}&&
\substack{\hspace{5mm}\\ \xymatrix{\rond{1}\ar@{-}[d]\\ \rond{2}\ar@{-}[d]\\ \rond{3}}}.
\end{align*}
The first one gives $a\otimes b-a\otimes b=0-b\otimes b$, so $b\otimes b=0$ and consequently $b=0$. The second one gives $\sum a'\rightarrow a'\otimes a''\triangleright a''-a\otimes a=0-a\otimes b=0$,
so $\Phi(a\otimes a)=a\otimes a$. Conversely, if $a\otimes a=\Phi(a\otimes a)$ and $p=\circ_a$,
\[p\circ (I,p)-p\circ(p,I)=\substack{\hspace{3mm}\\ \xymatrix{\rond{1}\ar@{-}[rd]_{a} &\rond{2}\ar@{-}[d]^{a}\\ &\rond{3}}}
+\sum\substack{\hspace{5mm}\\ \xymatrix{\rond{1}\ar@{-}[d]^{a''\triangleright a''}\\ 
\rond{2}\ar@{-}[d]^{a'\rightarrow a'}\\ \rond{3}}}
-\substack{\hspace{5mm}\\ \xymatrix{\rond{1}\ar@{-}[d]^{a}\\ \rond{2}\ar@{-}[d]^{a}\\ \rond{3}}}
=\substack{\hspace{3mm}\\ \xymatrix{\rond{1}\ar@{-}[rd]_{a} &\rond{2}\ar@{-}[d]^{a}\\ &\rond{3}}},\]
which is invariant under the action of $(12)$. So $\circ_a$ is prelie.\\

2. In the NAP relation $p\circ (I,p)=p\circ (I,p)^{(12)}$, let us identify the trees of the form
\begin{align*}
&\substack{\hspace{5mm}\\ \xymatrix{\rond{3}\ar@{-}[d]\\ \rond{2}\ar@{-}[d]\\ \rond{1}}}&\mbox{and}&&
\substack{\hspace{5mm}\\ \xymatrix{\rond{1}\ar@{-}[d]\\ \rond{2}\ar@{-}[d]\\ \rond{3}}}.
\end{align*}
The first one gives $b\otimes b=0$, so $b=0$. The second one gives $\sum a'\rightarrow a'\otimes a''\triangleright a''=0$. Conversely, if $\Phi(a\otimes a)=0$ and $p=\circ_a$,
\[p\circ(I,p)=p=\substack{\hspace{3mm}\\ \xymatrix{\rond{1}\ar@{-}[rd]_{a} &\rond{2}\ar@{-}[d]^{a}\\ &\rond{3}}},\]
which is invariant under the action of $(12)$. So $\circ_a$ is non-associative permutative.
\end{proof}

\begin{example}\label{ex2.5}
Let $(\Omega,\rightarrow,\triangleright)$ be an EAS and $(\K\Omega,\Phi)$ be its linearization. Let $a=\displaystyle\sum_{\alpha\in\Omega} a_\alpha\alpha\in\K\Omega$. Then $a$ is a special vector of eigenvalue 1 if, and only if
\begin{align*}
&\forall\alpha',\beta'\in\Omega,&\sum_{\substack{\alpha,\beta\in\Omega,\\
(\alpha\rightarrow\beta,\alpha\triangleright\beta)=(\alpha',\beta')}} a_\alpha a_\beta&=a_{\alpha'}a_{\beta'}.
\end{align*}
It is a special vector of eigenvalue 0 if, and only if
\begin{align*}
&\forall\alpha',\beta'\in\Omega,&\sum_{\substack{\alpha,\beta\in\Omega,\\
(\alpha\rightarrow\beta,\alpha\triangleright\beta)=(\alpha',\beta')}} a_\alpha a_\beta&=0.
\end{align*}
\end{example}

As a consequence, if $a$ is a special vector of $(A,\Phi)$ of eigenvalue $1$, then $(\g_{A,\calD},\circ_a)$ is a pre-Lie algebra. Let us study the structure of this pre-Lie algebra in the particular case where $a$ is a left unit.

\begin{prop}\label{prop2.10}
Let $a$ be a nonzero special vector of $A$ of eigenvalue 1 and let $f\in A^*$ such that $f(a)=1$. The pre-Lie algebra $(\g_{A,\calD},\circ_a)$ is freely generated by the space of trees $T$ such that any edge born from the root of $T$ is typed by an element of $\ker(f)$. 
\end{prop}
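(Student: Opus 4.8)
The plan is to realise $(\g_{A,\calD},\circ_a)$ as a free pre-Lie algebra by a triangularity argument in the spirit of \cite{LiMoBokut}, the generating space being the one in the statement. Since $f(a)=1$ we have $A=\K a\oplus\ker f$; fix a basis $\mathcal{B}=\{a\}\sqcup\mathcal{B}_0$ of $A$ with $\mathcal{B}_0$ a basis of $\ker f$. Trees whose edges are typed by elements of $\mathcal{B}$ and whose vertices are decorated by $\calD$ form a basis of $\g_{A,\calD}$; I call them $\mathcal{B}$-trees. By multilinearity of the type on each edge, the generating space $\mathcal{G}$ of the statement (root-edges in $\ker f$) has for basis exactly the $\mathcal{B}$-trees all of whose root-edges lie in $\mathcal{B}_0$. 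By Proposition~\ref{prop2.9} the product $\circ_a$ is pre-Lie. The key point, read off Proposition~\ref{prop2.6} and the definition of $\diamond^{(x)}_{\Phi,a}$, is that in $S\circ_a U$ the term grafting $S$ at the \emph{root} of $U$ is exactly $B_d(aS,\dots)$, with the new edge literally typed $a$ and all existing types left unchanged, because grafting at the root involves an empty path and hence no application of $\Phi$; every other term grafts $S$ strictly below the root of $U$.

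Let $\Theta\colon\free_{\prelie}(\mathcal{G})\to(\g_{A,\calD},\circ_a)$ be the pre-Lie morphism extending the inclusion $\mathcal{G}\hookrightarrow\g_{A,\calD}$; the goal is to show it is an isomorphism. The monomial basis of $\free_{\prelie}(\mathcal{G})$ consists of rooted trees whose vertices carry basis elements of $\mathcal{G}$, which I call meta-trees. I first build a bijection $L$ between meta-trees and $\mathcal{B}$-trees. Given a $\mathcal{B}$-tree $T$, declare its root a component-root and propagate the rule: an $a$-edge issuing from a component-root is a meta-edge and its upper endpoint is a new component-root, while every other edge is internal. This cuts $T$ into generator-components (the internal edges issuing from each component-root lie in $\mathcal{B}_0$, so each component is a basis element of $\mathcal{G}$, possibly carrying internal $a$-edges deeper down) organised into a meta-tree; conversely $L$ rebuilds $T$ by grafting each child generator at the root of its parent through an $a$-edge. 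This decoding is deterministic — at a component-root the internal edges are precisely the $\ker f$-typed ones, so no $a$-edge can be misclassified — so $L$ is a bijection.

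It then remains to compare $\Theta$ with $L$. I would equip the $\mathcal{B}$-trees with the total order refining the grading by number of vertices that compares first the number of children of the root (more children being larger) and then recurses on the root-subtrees — the type of order used in \cite{LiMoBokut}, compatible with grafting. The observation of the first paragraph shows that root-grafting raises the root-valence by one whereas every deeper grafting preserves it; running this through the recursive definition of $\Theta(\mathcal{M})$ gives $\Theta(\mathcal{M})=L(\mathcal{M})+(\text{strictly smaller }\mathcal{B}\text{-trees})$. Thus $\Theta$ is unitriangular with respect to $L$, hence a linear, and therefore pre-Lie, isomorphism, which is the assertion. As a consistency check the graded dimensions agree: by the corollary following Theorem~\ref{theo2.2} the generating series $T$ of $\g_{A,\calD}$ with one decoration satisfies $T=xe^{\dim(A)T}$, while if $P=xe^{P}$ denotes the series of the pre-Lie operad and $C=xe^{(\dim(A)-1)T}$ the series of $\mathcal{G}$, one checks $P(C)=T$.

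The main obstacle will be the triangularity in the last step: one must verify that, after re-expanding in the basis $\mathcal{B}$ all the $\Phi$-modifications produced by the deeper graftings, every resulting term is genuinely below $L(\mathcal{M})$ for this order, uniformly through the recursion that computes $\Theta(\mathcal{M})$. This is exactly where a carefully chosen monomial order on trees is needed, and it is also why one must use the decoding $L$ rather than the naive ``cut every $a$-edge'' map: a generator may legitimately contain internal $a$-edges, and the propagation rule keeps them inside their component precisely because such edges do not issue from a component-root.
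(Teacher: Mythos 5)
Your proposal is correct and follows essentially the same route as the paper: both realise the map from the free pre-Lie algebra on the stated generators as unitriangular with respect to a total order on typed trees that compares root fertility first, the leading term of the image of a meta-tree being the tree obtained by grafting each component onto the root of its parent's decoration via an $a$-typed edge (your bijection $L$ is the paper's $\pi$, described from the decoding side). The "main obstacle" you flag is resolved in the paper exactly as you sketch it — deeper graftings only alter edge types along the path to the root without increasing the root's fertility, so they produce strictly smaller trees once the order prioritises the root's decoration and fertility before recursing on the (type, subtree) pairs of its children.
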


\begin{proof} 
We fix a basis $(e_i)_{i\in I'}$ of $\ker(f)$ and fix a total order $\leqslant$ on $I'$.
Putting $I=I\sqcup\{\infty\}$ and $e_\infty=a$, we obtain a basis $(e_i)_{i\in I}$ of $A$, and $I$ is totally ordered, $\infty$ being its greatest element. We also fix a total order $\leqslant$ on $\calD$.\\

A basis of $\g_{A,\calD}$ is given by the set $\bfT$ of rooted trees which vertices are decorated by $\calD$ and the edges are typed by elements of the basis $(e_i)_{i\in I}$. 
For any $n\geqslant 1$, we denote by $\bfT_n$ the set of elements of $\bfT$ with $n$ vertices. We now define a total order on $\bfT_n$ in the following way.
\begin{itemize}
\item If $n=1$, the considered trees are reduced to a single vertex decorated by an element of $\calD$. The total order of $\calD$ induced a total order on $\bfT_1$.
\item Let us assume that the total order is totally defined on $\bfT_k$ for any $k<n$. We then define a total order on $\displaystyle\{e_i,i\in I\}\times\left(\sqcup_{k<n}\bfT_k\right)$ in the following way: $e_iT'<e_j T'$ if one of the following condition holds:
\begin{itemize}
\item $i<j$ in $I$.
\item $i=j$ and $T$ has strictly less vertices than $T'$.
\item $i=j$, $T$ and $T'$ have the same number of vertices $p$ and $T<T'$ in $\bfT_p$.
\end{itemize}
Let us consider $T,T'\in\bfT_n$ which we write $T=B_d(a_1 T_1,\ldots,a_k T_k)$ and $T'=B_{d'}(a'_1 T'_1,\ldots,a'_l T'_l)$, with
\begin{align*}
&a_1T_1\geqslant\ldots\geqslant a_k T_k,&&a'_1T'_1\geqslant\ldots\geqslant a'_k T'_k.
\end{align*}
We shall say that $T<T'$ in $\bfT_n$ if one of the following condition holds:
\begin{itemize}
\item $d<d'$ in $\calD$.
\item $d=d'$ and $k<l$.
\item $d=d'$, $k=l$ and there exists $i\in\{1,\ldots,k\}$ such that
\begin{align*}
a_1T_1&=a'_1T'_1,&&\ldots,&a_{i-1}T_{i-1}&=a'_{i-1}T'_{i-1},&a_iT_i&<a'_iT'_i.
\end{align*}\end{itemize}\end{itemize}

We denote by $\bfT'$ the set of trees $T\in\bfT$, such that no edge born the root is typed by $a=e_\infty$. 
A basis $\bfT_{\bfT'}$ of the free pre-Lie algebra $\g_{\bfT'}$ generated by $\bfT'$ is given by non-typed rooted trees $T$, which vertices are decorated by elements of $\bfT'$.
There exists a unique pre-Lie algebra morphism $\phi:(\g_{\bfT'},\circ)\longrightarrow (\g_{A,\calD},\circ_a)$ sending any tree $T\in\bfT_{\bfT'}$ with a single vertex decorated by $T\in\bfT'$ to the tree $T$.
Let us consider a tree $T=B_{T'}(T_1,\ldots,T_k)\in\g_{\bfT'}$, with $T'=B_d(a'_1T'_1,\ldots,a'_lT'_l)\in\bfT'$.
$\phi(T)$ is a nonzero linear span of trees with the same number of vertices, obtained by generalized graftings of the decorations of $T$. We denote by $\pi(T)$ the greatest of these trees for the the total order defined earlier. 
In all the trees appearing in $\phi(T)$, the decoration of the root is the decoration $d$ of the root of $T'$. If we consider a tree obtained by grafting trees of $\phi(T_1),\ldots,\phi(T_k)$ on the root of $T'$, the fertility of the root is $k+l$.
If one of the trees of $\phi(T_1),\ldots,\phi(T_k)$ is not grafted on the root of $T'$, the fertility of the root is $<k+l$. By definition of the total order, we obtain
\[\phi(T)=\pi(B_d(a\phi(T_1),\ldots,a\phi(T_k),a'_1T'_1,\ldots,a'_lT'_l)).\]
Up to a permutation, we can assume that $\pi(T_1)\geqslant\ldots\geqslant\pi(T_k)$. Then any tree in $\phi(T)$ is smaller than $B_d(a\pi(T_1),\ldots,a\pi(T_k),a'_1T'_1,\ldots,a'_lT'_l)$. Hence,
\[\pi(T)=B_d(a\pi(T_1),\ldots,\pi(T_k),a'_1T'_1,\ldots,a'_lT'_l).\]
In other words, $\pi(T)$ is obtained in the following process: for any vertex $v$ of $T$ which is not the root, graft the decoration of $v$ on the root of the decoration of the father of $v$.
This defines a bijection from the set $\bfT_{\bfT'}$ (basis of $\g_{\bfT'}$) to $\bfT$ (basis of $\g_{A,\calD})$. By triangularity of $\phi$, $\phi$ is bijective.\end{proof}

\begin{remark}
A similar order was used in\cite{Li2021} in free pre-Lie algebras in order to define and study Gröbner-Shirshov bases on these objects.
\end{remark}

\section{Generalized permutative algebras}

\subsection{Definition and Koszul duality}

\begin{defi}\label{defi3.1}
Let $(A,\Phi)$ be a dual $\ell$CEDS. An $(A,\Phi)$-permutative algebra is a pair $(V,\diamond)$ where $V$ is a vector space and
\[\diamond:\left\{\begin{array}{rcl}
A&\longrightarrow&\L(V\otimes V,V)\\
a&\longrightarrow&\diamond_a:\left\{\begin{array}{rcl}
V\otimes V&\longrightarrow&V\\
v\otimes w&\longrightarrow&v\diamond_a w,
\end{array}\right.\end{array}\right.\]
such that for any $x,y,z\in V$, for any $a,b\in A$,
\begin{align}
\label{EQ20}(x\diamond_b y)\diamond_a z&=\sum x\diamond_{a'\curvearrowright b'}(y\diamond_{a''\blacktriangleright b''} z),\\
\label{EQ21}x\diamond_a (y\diamond_b z)&=y\diamond_b (x\diamond_a z),
\end{align}
with Sweedler's notation
\[\Phi(a\otimes b)=\sum a'\curvearrowright b'\otimes a''\blacktriangleright b''.\]
We denote by $\perm_\Phi$ the operad of $(A,\Phi)$-permutative algebras.
\end{defi}

\begin{prop}\label{prop3.2}
Let $(A,\Phi)$ be a finite-dimensional $\ell$CEDS. The Koszul dual of the quadratic operad $\prelie_\Phi$ is $\perm_{\Phi^*}$. 
\end{prop}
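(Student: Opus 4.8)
The plan is to run the standard recipe for the Koszul dual of a binary quadratic operad: present $\prelie_\Phi=\calP(E,R)$ with $E=E(2)$ the $S_2$-module of generators and $R\subseteq \mathcal{F}(E)(3)$ the space of quadratic relations, and then identify $\prelie_\Phi^!=\calP(E^\vee,R^\perp)$ with $\perm_{\Phi^*}$. First I would pin down the generators. The products $\circ_a$ are linear in $a\in A$ and carry no symmetry, and $\prelie_\Phi$ has no relation in arity $2$, so as an $S_2$-module $E(2)=A\otimes \K S_2$, of dimension $2\dim(A)$, with basis the $\circ_a$ together with their opposites $\circ_a^{\mathrm{op}}$. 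The Koszul-dual generating module is $E^\vee(2)=E(2)^*\otimes \mathrm{sgn}_2$; since $\K S_2\otimes \mathrm{sgn}_2\cong \K S_2$, this equals $A^*\otimes \K S_2$, which is exactly the generating module of $\perm_{\Phi^*}$ (a single non-symmetric operation $\diamond_\xi$, linear in $\xi\in A^*$). Thus the generators already match, and everything reduces to matching the relations.

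Second, and this is the heart, I would fix the canonical pairing between $\mathcal{F}(E)(3)$ and $\mathcal{F}(E^\vee)(3)$ (the Ginzburg--Kapranov / Loday--Vallette pairing, the Koszul signs coming from the $\mathrm{sgn}_2$-twist at each internal vertex), write $R$ as the span over $a,b\in A$ of the left-minus-right-hand sides of (\ref{EQ18}) (with $\Phi$ inserted at the lower composition via the Sweedler notation $a'\rightarrow b'\otimes a''\triangleright b''$), and write the candidate relation space $S$ as the span over $\xi,\eta\in A^*$ of (\ref{EQ20}) and (\ref{EQ21}) (with $\Phi^*$ inserted via $\xi'\curvearrowright \eta'\otimes \xi''\blacktriangleright \eta''$). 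I would then verify $\langle R,S\rangle=0$, i.e. $S\subseteq R^\perp$, by directly pairing a pre-Lie relator against the $\Phi^*$-associativity relator (\ref{EQ20}) and against the permutation relator (\ref{EQ21}) and checking that the contributions cancel. The cancellation is governed exactly by the transpose of the braid equation (\ref{EQ14}) and of the commutation relation (\ref{EQ15}); this is the point at which I invoke that $(A^*,\Phi^*)$ is a dual $\ell$CEDS (the transposition proposition), so that $\Phi^*$ obeys the bookkeeping required by (\ref{EQ20}) and (\ref{EQ21}).

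Third, to upgrade the inclusion $S\subseteq R^\perp$ to an equality I would count dimensions. Since $\dim \mathcal{F}(E)(3)=3(\dim E(2))^2=12\dim(A)^2$ and, by the Corollary, $\dim \prelie_\Phi(3)=3^{2}\dim(A)^{2}=9\dim(A)^2$, the relation space satisfies $\dim R=12\dim(A)^2-9\dim(A)^2=3\dim(A)^2$, whence $\dim R^\perp=12\dim(A)^2-3\dim(A)^2=9\dim(A)^2$ by nondegeneracy of the pairing. On the other side, the monomial description of free $\Phi$-permutative algebras gives $\dim \perm_{\Phi^*}(3)=3\dim(A)^2$, so $\dim S=12\dim(A)^2-3\dim(A)^2=9\dim(A)^2=\dim R^\perp$. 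Combined with $S\subseteq R^\perp$, this forces $S=R^\perp$, and therefore $\prelie_\Phi^!=\calP(E^\vee,R^\perp)=\perm_{\Phi^*}$.

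I expect the main obstacle to be the orthogonality computation of the second step: organizing the Sweedler bookkeeping for $\Phi$ and $\Phi^*$ simultaneously, together with the Koszul signs, so that the pairing of the pre-Lie relators against both families of permutative relators visibly vanishes. The conceptual guarantee that it must vanish is precisely the duality between the $\ell$CEDS axioms (\ref{EQ14})--(\ref{EQ15}) for $\Phi$ and the dual $\ell$CEDS axioms for $\Phi^*$, which is what the transposition proposition encodes.
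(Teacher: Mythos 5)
Your proposal follows essentially the same route as the paper's proof: exhibit the relations of $\perm_{\Phi^*}$ inside $R^\perp$ by a direct pairing computation, then force equality by a dimension count in arity $3$ (the paper obtains $\dim R^\perp=9\dim(A)^2$ from $\dim I(3)=3\dim(A)^2$ and a linear-independence check on explicit generators $X_1,\dots,X_9$, whereas you quote the arity-$3$ dimension formulas $\dim\prelie_\Phi(3)=9\dim(A)^2$ and $\dim\perm_{\Phi^*}(3)=3\dim(A)^2$; both are legitimate). One small correction: the orthogonality $\langle R,S\rangle=0$ is purely formal -- the $\Phi$-twisted term of a pre-Lie relator pairs against the $\Phi^*$-twisted term of (\ref{EQ20}) and cancels by the very definition of the adjoint, with no appeal to (\ref{EQ14}) or (\ref{EQ15}); those axioms enter only to guarantee that $(A^*,\Phi^*)$ is a dual $\ell$CEDS, so that $\perm_{\Phi^*}$ is defined and its arity-$3$ component has the stated dimension.
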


\begin{proof}
For any vector space $E$, let us denote by $\free_E$ the free operad generated by $E\subseteq\free_E$. As a vector space, with tree-operadic notations,
\begin{align*}
\free_E(2)&=\left(\bfdeux{$1$}{$2$}\otimes E\right)\bigoplus\left(\bfdeux{$2$}{$1$}\otimes E\right),\\
\free_E(3)&=\left(\bftroisun{$1$}{$2$}{$3$}\otimes E^{\otimes 2}\right)\bigoplus\left(\bftroisun{$1$}{$3$}{$2$}\otimes E^{\otimes 2}\right)\bigoplus\left(\bftroisun{$2$}{$1$}{$3$}\otimes E^{\otimes 2}\right)\\
&\bigoplus\left(\bftroisun{$2$}{$3$}{$1$}\otimes E^{\otimes 2}\right)\bigoplus\left(\bftroisun{$3$}{$1$}{$2$}\otimes E^{\otimes 2}\right)\bigoplus\left(\bftroisun{$3$}{$2$}{$1$}\otimes E^{\otimes 2}\right)\\
&\bigoplus\left(\bftroisdeux{$1$}{$2$}{$3$}\otimes E^{\otimes 2}\right)\bigoplus\left(\bftroisdeux{$1$}{$3$}{$2$}\otimes E^{\otimes 2}\right)\bigoplus\left(\bftroisdeux{$2$}{$1$}{$3$}\otimes E^{\otimes 2}\right)\\
&\bigoplus\left(\bftroisdeux{$2$}{$3$}{$1$}\otimes E^{\otimes 2}\right)\bigoplus\left(\bftroisdeux{$3$}{$1$}{$2$}\otimes E^{\otimes 2}\right)\bigoplus\left(\bftroisdeux{$3$}{$2$}{$1$}\otimes E^{\otimes 2}\right),
\end{align*}
and the action of the symmetric groups is given by permutations on the indices on the leaves. The composition is given by grafting on the leaves: for any $a,b\in E$,
\begin{align*}
\bfdeux{$1$}{$2$}\otimes a\circ_1\bfdeux{$1$}{$2$}\otimes b&=\bftroisun{$1$}{$2$}{$3$}\otimes a\otimes b,&
\bfdeux{$1$}{$2$}\otimes a\circ_2\bfdeux{$1$}{$2$}\otimes b&=\bftroisdeux{$1$}{$2$}{$3$}\otimes a\otimes b,\\
\bfdeux{$1$}{$2$}\otimes a\circ_1\bfdeux{$2$}{$1$}\otimes b&=\bftroisun{$2$}{$1$}{$2$}\otimes a\otimes b,&
\bfdeux{$1$}{$2$}\otimes a\circ_2\bfdeux{$2$}{$1$}\otimes b&=\bftroisdeux{$1$}{$3$}{$2$}\otimes a\otimes b,\\
\bfdeux{$2$}{$1$}\otimes a\circ_1\bfdeux{$1$}{$2$}\otimes b&=\bftroisdeux{$3$}{$1$}{$2$}\otimes a\otimes b,&
\bfdeux{$2$}{$1$}\otimes a\circ_2\bfdeux{$1$}{$2$}\otimes b&=\bftroisun{$2$}{$3$}{$1$}\otimes a\otimes b,\\
\bfdeux{$2$}{$1$}\otimes a\circ_1\bfdeux{$2$}{$1$}\otimes b&=\bftroisdeux{$3$}{$2$}{$1$}\otimes a\otimes b,&
\bfdeux{$2$}{$1$}\otimes a\circ_2\bfdeux{$2$}{$1$}\otimes b&=\bftroisun{$3$}{$2$}{$1$}\otimes a\otimes b.
\end{align*}
Moreover, if $E$ is finite-dimensional, $\free_E^*$ is identified with $\free_{E^*}$ with a pairing compatible with the action of the symmetric groups, such that:
\begin{itemize}
\item For any operadic trees $T$, $T'$ with two leaves, $f\in E^*$ and $a\in E$,
\[\langle T\otimes f,T'\otimes a\rangle=f(a)\delta_{T,T'}\varepsilon_T,\]
with
\[\begin{array}{|c|c|c|}
\hline T&\bfdeux{$1$}{$2$}&\bfdeux{$2$}{$1$}\\
\hline\varepsilon_T&1&-1\\
\hline\end{array}\]
\item For any operadic trees $T$, $T'$ with three leaves, $f,g\in E^*$ and $a,b\in E$,
\[\langle T\otimes f\otimes g,T'\otimes a\otimes b\rangle=f(a)g(b)\delta_{T,T'}\varepsilon_T,\]
with
\[\begin{array}{|c|c|c|c|c|c|c|}
\hline T&\bftroisun{$1$}{$2$}{$3$}&\bftroisun{$1$}{$3$}{$2$}&\bftroisun{$2$}{$1$}{$3$}&\bftroisun{$2$}{$3$}{$1$}&\bftroisun{$3$}{$1$}{$2$}&\bftroisun{$3$}{$2$}{$1$}\\
\hline\varepsilon_T&1&-1&-1&1&1&-1\\
\hline\hline T&\bftroisdeux{$1$}{$2$}{$3$}&\bftroisdeux{$1$}{$3$}{$2$}&\bftroisdeux{$2$}{$1$}{$3$}
&\bftroisdeux{$2$}{$3$}{$1$}&\bftroisdeux{$3$}{$1$}{$2$}&\bftroisdeux{$3$}{$2$}{$1$}\\
\hline\varepsilon_T&-1&1&1&-1&-1&1\\
\hline\end{array}\]
The operad $\prelie_\Phi$ is the quotient of $\free_A$ by the ideal $I$ generated by 
\[X(a\otimes b)=\bftroisdeux{$1$}{$2$}{$3$}\otimes a\otimes b-\bftroisun{$1$}{$2$}{$3$}\otimes\Phi(a\otimes b)-\bftroisdeux{$2$}{$1$}{$3$}\otimes b\otimes a+\bftroisun{$2$}{$1$}{$3$}\otimes\Phi(b\otimes a),\]
with $a,b\in\Omega$. As a vector space, $I(3)$ is of dimension $3\dim(A)^2$, and is generated by the elements
\begin{align*}
X(a\otimes b),\\
Y(a\otimes b)&=X(a\otimes b)^{(23)}\\
&=\bftroisdeux{$1$}{$3$}{$2$}\otimes a\otimes b-\bftroisun{$1$}{$3$}{$2$}\otimes\Phi(a\otimes b)-\bftroisdeux{$3$}{$1$}{$2$}\otimes b\otimes a+\bftroisun{$3$}{$1$}{$2$}\otimes\Phi(b\otimes a),\\
Z(a\otimes b)&=X(a\otimes b)^{(13)}\\
&=\bftroisdeux{$3$}{$2$}{$1$}\otimes a\otimes b-\bftroisun{$3$}{$2$}{$1$}\otimes\Phi(a\otimes b)-\bftroisdeux{$2$}{$3$}{$1$}\otimes b\otimes a+\bftroisun{$2$}{$3$}{$1$}\otimes\Phi(b\otimes a),
\end{align*}
with $a,b\in A$. Recall that the Koszul dual $\prelie^!_\Phi$ is the quotient of $\free_{A^*}$ by the ideal generated by $I(3)^\perp$. For any $f,g\in A^*$, we put
\[X_1(f,g)=\bftroisun{$1$}{$2$}{$3$}\otimes f\otimes g-\bftroisdeux{$1$}{$2$}{$3$}\otimes\Phi^*(f\otimes g).\]
\end{itemize}
For any $f,g\in A^*$, any $a,b\in A$,
\begin{align*}
\langle X_1(f\otimes g),X(a\otimes b)\rangle&=-(f\otimes g)(\Phi(a\otimes b)+\Phi^*(f\otimes g)(a\otimes b)=0,\\
\langle X_1(f\otimes g),Y(a\otimes b)\rangle&=0,\\
\langle X_1(f\otimes g),Z(a\otimes b)\rangle&=0.
\end{align*}
So $X_1(f\otimes g)\in I(3)^\perp$. As a consequence, we obtain a free $\mathfrak{S}_3$-submodule of $I(3)^\perp$, generated by the elements
\begin{align*}
X_1(f\otimes g),&\\
X_2(f\otimes g)&=X_1(f\otimes g)^{(12)}=\bftroisun{$2$}{$1$}{$3$}\otimes f\otimes g-\bftroisdeux{$2$}{$1$}{$3$}\otimes\Phi^*(f\otimes g),\\
X_3(f\otimes g)&=X_1(f\otimes g)^{(23)}=\bftroisun{$1$}{$3$}{$2$}\otimes f\otimes g-\bftroisdeux{$1$}{$3$}{$2$}\otimes\Phi^*(f\otimes g),\\
X_4(f\otimes g)&=X_1(f\otimes g)^{(13)}=\bftroisun{$3$}{$2$}{$1$}\otimes f\otimes g-\bftroisdeux{$3$}{$2$}{$1$}\otimes\Phi^*(f\otimes g),\\
X_5(f\otimes g)&=X_1(f\otimes g)^{(123)}=\bftroisun{$2$}{$3$}{$1$}\otimes f\otimes g-\bftroisdeux{$2$}{$3$}{$1$}\otimes\Phi^*(f\otimes g),\\
X_6(f\otimes g)&=X_1(f\otimes g)^{(132)}=\bftroisun{$3$}{$1$}{$2$}\otimes f\otimes g-\bftroisdeux{$3$}{$1$}{$2$}\otimes\Phi^*(f\otimes g),
\end{align*}
with $f,g\in A^*$. For any $f,g\in A^*$, we put
\[X_7(f\otimes g)=\bftroisdeux{$1$}{$2$}{$3$}\otimes f\otimes g-\bftroisdeux{$2$}{$1$}{$3$}\otimes g\otimes f.\]
For any $f,g\in A^*$, any $a,b\in A$,
\begin{align*}
\langle X_7(f\otimes g),X(a\otimes b)\rangle&=-f(a)g(b)+g(b)f(a)=0,\\
\langle X_7(f\otimes g),Y(a\otimes b)\rangle&=0,\\
\langle X_7(f\otimes g),Z(a\otimes b)\rangle&=0.
\end{align*}
So $X_7(f\otimes g)\in I(3)^\perp$. As a consequence, noticing that $X_7(f\otimes g)^{(12)}=-X_7(g\otimes f)$, we obtain a $\mathfrak{S}_3$-submodule of $I(3)^\perp$, generated by the elements
\begin{align*}
X_7(f\otimes g),&\\
X_8(f\otimes g)&=X_7(f\otimes f)^{(23)}=\bftroisdeux{$1$}{$3$}{$2$}\otimes f\otimes g-\bftroisdeux{$3$}{$1$}{$2$}\otimes g\otimes f,\\
X_9(f\otimes g)&=X_7(f\otimes f)^{(13)}=\bftroisdeux{$3$}{$2$}{$1$}\otimes f\otimes g-\bftroisdeux{$2$}{$3$}{$1$}\otimes g\otimes f.
\end{align*}
Let $f_i\otimes g_i\in A^*\otimes A^*$ such that
\[\sum_{i=1}^9 X_i(f_i\otimes g_i)=0.\]
Considering the terms $\bftroisdeux{$i$}{$j$}{$k$}\otimes f\otimes g$, with $(i,j,k)\in\mathfrak{S}_3$, we obtain that if $1\leqslant i\leqslant 6$, $f_i\otimes g_i=0$. 
Considering the terms $\bftroisun{$i$}{$j$}{$k$}$, we obtain that if $7\leqslant i\leqslant 9$, $f_i\otimes g_i=0$. Hence, we have obtained a submodule of $I(3)^\perp$ of dimension $9\dim(V)^2$. Moreover,
\[\dim(I(3)^\perp)=\dim(\free_A(3))-\dim(I(3))=12\dim(V)^2-3\dim(V)^2=9\dim(V)^2,\]
so the elements $X_1(f\otimes f)$ and $X_7(f\otimes g)$, with $f,g\in A^*$, generate $I(3)^\perp$, which gives the description of $\prelie_\Phi^!$.\end{proof}

\subsection{Free generalized permutative algebras}

\begin{prop}\label{prop3.3}
Let $(A,\Phi)$ be a dual $\ell$CEDS and $V$ be a vector space. We define
\[C_A(V)=S(A\otimes V)\otimes V.\]
Its elements will be denoted as linear spans of terms $a_1x_1\ldots a_nx_n\mid x$, with $n\geqslant 0$, $a_1,\ldots,a_n\in A$ and $x_1,\ldots,x_n,x\in V$. Note that for any $\sigma\in\mathfrak{S}_n$,
\[a_1x_1\ldots a_nx_n\mid x=a_{\sigma(1)}x_{\sigma(1)}\ldots a_{\sigma(n)}x_{\sigma(n)}\mid x.\]
For any $a\in A$, we define a product a product $\diamond_a$ on $C_A(V)$ by
\begin{align}
\label{EQ22} a_1x_1\ldots a_kx_k\mid x\diamond_a b_1y_1\ldots b_ly_k\mid y&=\sum a'_1x_1\ldots a'_kx_ka'_{k+1}x y_1\ldots b_ly_k\mid y,
\end{align}
with
\[\sum a'_1\otimes\ldots\otimes a'_{k+1}=\left(\id^{\otimes (k-1)}\otimes\Phi\right)\circ\ldots\circ\left(\Phi\otimes\id^{\otimes (k-1)}\right)(a\otimes a_1\otimes\ldots\otimes a_k).\]
Then $(C_A(V),\diamond)$ is the free $(A,\Phi)$-permutative algebra generated by $V$. 
\end{prop}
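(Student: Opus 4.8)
The plan is to proceed in three stages: first check that each product $\diamond_a$ is well-defined on the quotient $C_A(V)=S(A\otimes V)\otimes V$, then verify the two defining relations \eqref{EQ20} and \eqref{EQ21}, and finally establish the universal property. Well-definedness is the first point that genuinely uses the dual $\ell$CEDS structure. The right-hand side of \eqref{EQ22} is built from the cascade $(\id^{\otimes(k-1)}\otimes\Phi)\circ\cdots\circ(\Phi\otimes\id^{\otimes(k-1)})$, which threads the incoming type $a$ through the types $a_1,\dots,a_k$ of the left factor; since that factor lives in the symmetric algebra, I must show the cascade is invariant under the $\mathfrak{S}_k$-action permuting the pairs $a_ix_i$. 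It suffices to treat an adjacent transposition, and there the relevant computation is local: swapping $a_i$ with $a_{i+1}$ leaves unchanged the two outgoing types carried by $x_i$ and $x_{i+1}$ and, crucially, leaves the downstream residue unchanged, by exactly the componentwise relations \eqref{EQ12} and \eqref{EQ13} (equivalently the linear relation \eqref{EQ16}). Because the residue is preserved, the further threading through $a_{i+2},\dots,a_k$ and onto the head is unaffected, so invariance under all of $\mathfrak{S}_k$ follows and $\diamond_a$ descends to $C_A(V)$.

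Next I verify the axioms. Relation \eqref{EQ21} is essentially formal: in $x\diamond_a(y\diamond_b z)$ the outer cascade threads $a$ only through the bag of $x$ while the inner cascade threads $b$ only through the bag of $y$, so the two threadings are independent, both merge into the bag of $z$, and the head remains that of $z$; commutativity of $S(A\otimes V)$ then gives \eqref{EQ21} with no appeal to the axioms. Relation \eqref{EQ20} is where the braid equation \eqref{EQ14} enters. Writing out both sides, each yields an element whose bag is indexed by the same elements of $V$ with head $\zeta$, and matching the resulting types amounts to comparing ``thread $b$, then thread $a$'' on the left with ``thread the two Sweedler components of $\Phi(a\otimes b)$'' on the right. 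In the representative case of singleton bags this comparison is literally the identity
\[(\id\otimes\Phi)\circ(\Phi\otimes\id)\circ(\id\otimes\Phi)=(\Phi\otimes\id)\circ(\id\otimes\tau)\circ(\Phi\otimes\id),\]
that is \eqref{EQ14}; the general case follows by iterating this local move along the bag, the well-definedness established above allowing me to choose any convenient ordering of the bag at each stage.

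Finally I treat freeness. The embedding $V\hookrightarrow C_A(V)$, $x\mapsto \mid x$, generates $C_A(V)$ under the products, since a direct computation from \eqref{EQ22} gives $a_1x_1\ldots a_kx_k\mid x=(\mid x_1)\diamond_{a_1}(\cdots((\mid x_k)\diamond_{a_k}(\mid x))\cdots)$. Given any $(A,\Phi)$-permutative algebra $(W,\diamond)$ and a linear map $\iota\colon V\to W$, I define $\Psi\colon C_A(V)\to W$ by the same nested expression with each $\mid x_i$ replaced by $\iota(x_i)$; iterating \eqref{EQ21} in $W$ shows the value is independent of the ordering of the bag, so $\Psi$ is well-defined and linear, and $\Psi(\mid x)=\iota(x)$. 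That $\Psi$ is a morphism, $\Psi(w\diamond_a w')=\Psi(w)\diamond_a\Psi(w')$, is proved by induction on the size of the bag of $w$: writing $w=(\mid x_1)\diamond_{a_1}w_0$ and applying \eqref{EQ20} in $C_A(V)$ reproduces exactly the first step of the cascade, after which $\Psi((\mid x_1)\diamond_c W)=\iota(x_1)\diamond_c\Psi(W)$, the induction hypothesis on $w_0$, and \eqref{EQ20} in $W$ close the computation. Uniqueness is immediate, since any morphism extending $\iota$ is forced on the generators $\mid x$.

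The main obstacle is the bookkeeping of the second stage: establishing $\mathfrak{S}_k$-invariance of the cascade and verifying \eqref{EQ20} both require tracking how the threaded types propagate through an arbitrarily large symmetric bag and invoking \eqref{EQ14} and \eqref{EQ16} at each stage. Once these are in place, the verification of \eqref{EQ21} and the universal property are formal.
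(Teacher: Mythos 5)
Your proposal is correct and follows essentially the same route as the paper: well-definedness of $\diamond_a$ under permutation of the bag via \eqref{EQ16} (the paper checks the adjacent swap for $k=2$ exactly as you describe), \eqref{EQ20} via iterated application of the braid relation \eqref{EQ14} (which the paper carries out with braid diagrams rather than verbal iteration), \eqref{EQ21} as a formal commutation of cascades acting on disjoint tensor factors, and the universal property via the nested-product formula. The only cosmetic difference is that you prove the morphism property of $\Psi$ by induction on the bag size where the paper uses the equivalent "multiplicative subalgebra" argument.
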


\begin{proof}
Firstly, note that $\diamond_a$ is well-defined, that is to say the result in (\ref{EQ22}) does not depend on the order chosen on the $a_ix_i$. For example, for $k=2$, taking $b_1\otimes b_2=a_2\otimes a_1$, by (\ref{EQ16}),
\begin{align*}
\sum b'_1\otimes b'_2\otimes b'_3&=(\id\otimes\Phi)\circ (\Phi\otimes\id)(a\otimes a_2\otimes a_1)\\
&=\sum\sum a'\curvearrowright a_2'\otimes (a''\blacktriangleright a''_2)'\curvearrowright a'_1\otimes (a''\blacktriangleright a''_2)''\blacktriangleright a''_1\\
&=\sum\sum(a''\blacktriangleright a''_1)'\curvearrowright a'_2\otimes a'\curvearrowright a'_1\otimes (a''\blacktriangleright a''_1)''\blacktriangleright a''_2,
\end{align*}
so, in $S(A\otimes V)$,
\[\sum a'_1x_1a'_2x_2a'_3x=\sum b'_1x_2b'_2x_1b'_3x.\]
Let us prove that $C_A(V)$ is a $(A,\Phi)$-permutative algebra. Let us consider
\begin{align*}
X&=a_1 x_1\ldots a_k x_k\mid x,&Y&=b_1 y_1\ldots b_k y_k\mid y,&Z&=c_1 z_1\ldots c_m z_m\mid z.
\end{align*}
Let $a,b\in A$. Then
\begin{align*}
(X\diamond_b Y)\diamond_a Z&=\sum a'_1 x_1\ldots a'_k x_k a'_{k+1}x b'_1 y_1\ldots b'_k l_l b'_{l+1} y c_1 z_1\ldots c_m z_m\mid z,\\
\sum X\diamond_{a'\curvearrowright b'} (Y\diamond_{a''\blacktriangleright b''} Z)&=\sum a''_1 x_1\ldots a''_k x_k a''_{k+1}x b''_1 y_1\ldots b''_k l_l b''_{l+1} y c_1 z_1\ldots c_m z_m\mid z,
\end{align*}
with
\begin{align*}
\sum a'_1\otimes\ldots a'_{k+1}\otimes b'_1\otimes\ldots\otimes b'_{l+1}&=\left(\id^{\otimes (k+l)}\otimes\Phi\right)\circ\ldots\circ\left(\Phi\otimes\id^{\otimes (k+l)}\right)\\
&\circ\left (\id^{\otimes k}\otimes\Phi\otimes\id^{\otimes l}\right)\circ\ldots\circ\left(\id\otimes\Phi\otimes\id^{\otimes (k+l-1)}\right)\\
&(a\otimes b\otimes a_1\otimes\ldots\otimes a_k\otimes b_1\otimes\ldots\otimes b_l),\\
\sum a''_1\otimes\ldots a''_{k+1}\otimes b''_1\otimes\ldots\otimes b''_{l+1}&=\left(\id^{\otimes(k-1)}\otimes\Phi\otimes\id^{\otimes (l+1)}\right)\circ\ldots\circ\left(\Phi\otimes\id^{\otimes k+l}\right)\\
&\circ\left(\id^{\otimes(k+l)}\otimes\Phi\right)\circ\ldots\circ\left(\id^{\otimes (k+1)}\otimes\Phi\otimes\id^{\otimes (l-1)}\right)\\
&\circ\left(\id^{\otimes(k-1)}\otimes\tau\otimes\id^{\otimes (l+1)}\right)\circ\ldots\circ\left(\id\otimes\tau\otimes\id^{\otimes (k+l-1)}\right)\\
&\circ\left(\Phi\otimes\id^{\otimes k+l}\right)(a\otimes b\otimes a_1\otimes\ldots\otimes a_k\otimes b_1\otimes\ldots\otimes b_l),
\end{align*}
where $\tau:A\otimes A\longrightarrow A\otimes A$ is the usual flip. In order to prove that these two elements of $A^{\otimes (k+l+2)}$ are equal, we shall use braid diagrams-like notations. 
We shall represent $\Phi$ and $\tau$ by diagrams (to be read from bottom to top)
\begin{align*}
\Phi&=\begin{tikzpicture}[line cap=round,line join=round,>=triangle 45,x=0.15cm,y=0.15cm]
\clip(0.,0.) rectangle (3.,3.);
\draw [line width=1.2pt] (0.,0.)-- (1.,1.);
\draw [line width=1.2pt] (3.,0.)-- (0.,3.);
\draw [line width=1.2pt] (2.,2.)-- (3.,3.);
\end{tikzpicture},&\tau&=\begin{tikzpicture}[line cap=round,line join=round,>=triangle 45,x=0.15cm,y=0.15cm]
\clip(0.,0.) rectangle (3.,3.);
\draw [line width=1.2pt] (0.,0.)-- (3.,3.);
\draw [line width=1.2pt] (3.,0.)-- (0.,3.);
\end{tikzpicture},
\end{align*}
and the composition will be represented by a vertical concatenation. For example, (\ref{EQ14}) is rewritten as
\begin{align}
\tag{\ref{EQ14}''}\substack{\hspace{2mm}\\ \begin{tikzpicture}[line cap=round,line join=round,>=triangle 45,x=0.15cm,y=0.15cm]
\clip(-0.5,-0.1) rectangle (6.5,9.1);
\draw [line width=1.2pt] (-0.5,0.)-- (6.5,0.);
\draw [line width=1.2pt] (-0.5,9.)-- (6.5,9.);
\draw [line width=1.2pt] (0.,0.)-- (1.,1.);
\draw [line width=1.2pt] (3.,0.)-- (0.,3.);
\draw [line width=1.2pt] (2.,2.)-- (3.,3.);
\draw [line width=1.2pt] (3.,3.)-- (6.,6.);
\draw [line width=1.2pt] (6.,0.)-- (6.,3.);
\draw [line width=1.2pt] (6.,3.)-- (3.,6.);
\draw [line width=1.2pt] (0.,3.)-- (0.,6.);
\draw [line width=1.2pt] (0.,6.)-- (1.,7.);
\draw [line width=1.2pt] (3.,6.)-- (0.,9.);
\draw [line width=1.2pt] (2.,8.)-- (3.,9.);
\draw [line width=1.2pt] (6.,6.)-- (6.,9.);
\end{tikzpicture}}&=
\substack{\hspace{2mm}\\ \begin{tikzpicture}[line cap=round,line join=round,>=triangle 45,x=0.15cm,y=0.15cm]
\clip(8.5,-0.1) rectangle (15.5,9.1);
\draw [line width=1.2pt] (8.5,0.)-- (15.5,0.);
\draw [line width=1.2pt] (8.5,9.)-- (15.5,9.);
\draw [line width=1.2pt] (9.,0.)-- (9.,3.);
\draw [line width=1.2pt] (12.,0.)-- (13.,1.);
\draw [line width=1.2pt] (15.,0.)-- (9.,6.);
\draw [line width=1.2pt] (14.,2.)-- (15.,3.);
\draw [line width=1.2pt] (9.,3.)-- (10.,4.);
\draw [line width=1.2pt] (11.,5.)-- (12.,6.);
\draw [line width=1.2pt] (15.,3.)-- (15.,6.);
\draw [line width=1.2pt] (15.,6.)-- (12.,9.);
\draw [line width=1.2pt] (9.,6.)-- (9.,9.);
\draw [line width=1.2pt] (15.,9.)-- (14.,8.);
\draw [line width=1.2pt] (12.,6.)-- (13.,7.);
\end{tikzpicture}}.\end{align}
Then
\begin{align*}
\sum a'_1\otimes\ldots a'_{k+1}\otimes b'_1\otimes\ldots\otimes b'_{l+1}&=\substack{\hspace{2mm}\\ \begin{tikzpicture}[line cap=round,line join=round,>=triangle 45,x=0.15cm,y=0.15cm]
\clip(-0.5,-0.1) rectangle (15.5,12.1);
\draw [line width=1.2pt] (-0.5,0.)-- (15.5,0.);
\draw [line width=1.2pt] (-0.5,12.)-- (15.5,12.);
\draw [line width=1.2pt] (0.,0.)-- (0.,6.);
\draw [line width=1.2pt] (3.,6.)-- (6.,0.);
\draw [line width=1.2pt] (9.,0.)-- (6.,6.);
\draw [line width=1.2pt] (3.,0.)-- (4.,1.);
\draw [line width=1.2pt] (9.,6.)-- (8.,5.);
\draw [line width=1.2pt] (12.,0.)-- (12.,6.);
\draw [line width=1.2pt] (15.,0.)-- (15.,6.);
\draw [line width=1.2pt] (15.,6.)-- (12.,12.);
\draw [line width=1.2pt] (12.,6.)-- (9.,12.);
\draw [line width=1.2pt] (9.,6.)-- (6.,12.);
\draw [line width=1.2pt] (6.,6.)-- (3.,12.);
\draw [line width=1.2pt] (3.,6.)-- (0.,12.);
\draw [line width=1.2pt] (0.,6.)-- (1.6,6.6);
\draw [line width=1.2pt] (5.8,8.3)-- (6.6,8.6);
\draw [line width=1.2pt] (8.4,9.4)-- (9.3,9.7);
\draw [line width=1.2pt] (13.5,11.5)-- (15.,12.);
\draw (3.2,6.) node[anchor=north west] {\tiny $\ldots$};
\draw (11.7,6.) node[anchor=north west] {\tiny $\ldots$};
\end{tikzpicture}}\:
(a\otimes b\otimes a_1\otimes\ldots\otimes a_k\otimes b_1\otimes\ldots\otimes b_l),\\[2mm]
\sum a'_1\otimes\ldots a'_{k+1}\otimes b'_1\otimes\ldots\otimes b'_{l+1}&=\substack{\hspace{2mm}\\ \begin{tikzpicture}[line cap=round,line join=round,>=triangle 45,x=0.15cm,y=0.15cm]
\clip(-0.5,-0.1) rectangle (15.5,15.1);
\draw [line width=1.2pt] (-0.5,0.)-- (15.5,0.);
\draw [line width=1.2pt] (-0.5,15.)-- (15.5,15.);
\draw [line width=1.2pt] (3.,0.)-- (0.,3.);
\draw [line width=1.2pt] (0.,0.)-- (1.,1.);
\draw [line width=1.2pt] (2.,2.)-- (3.,3.);
\draw [line width=1.2pt] (6.,0.)-- (6.,3.);
\draw [line width=1.2pt] (9.,0.)-- (9.,3.);
\draw [line width=1.2pt] (6.,3.)-- (3.,9.);
\draw [line width=1.2pt] (9.,3.)-- (6.,9.);
\draw [line width=1.2pt] (3.,3.)-- (9.,9.);
\draw [line width=1.2pt] (0.,3.)-- (0.,9.);
\draw [line width=1.2pt] (12.,0.)-- (12.,9.);
\draw [line width=1.2pt] (15.,0.)-- (15.,9.);
\draw [line width=1.2pt] (12.,9.)-- (9.,15.);
\draw [line width=1.2pt] (15.,9.)-- (12.,15.);
\draw [line width=1.2pt] (9.,9.)-- (10.,10.);
\draw [line width=1.2pt] (15.,15.)-- (14.,14.);
\draw [line width=1.2pt] (3.,9.)-- (0.,15.);
\draw [line width=1.2pt] (6.,9.)-- (3.,15.);
\draw [line width=1.2pt] (0.,9.)-- (1.,10.);
\draw [line width=1.2pt] (6.,15.)-- (5.,14.);
\draw (3.5,8.4) node[anchor=north west] {\tiny $\ldots$};
\draw (11.7,8.4) node[anchor=north west] {\tiny $\ldots$};
\end{tikzpicture}}\:(a\otimes b\otimes a_1\otimes\ldots\otimes a_k\otimes b_1\otimes\ldots\otimes b_l).
\end{align*}
The iterated application of (\ref{EQ14}) gives that the two diagrams above are equal, which finally gives (\ref{EQ20}) for $C_A(V)$. 
Moreover,
\begin{align*}
X\diamond_a(Y\diamond_b Z)&=\sum a'_1 x_1\ldots a'_k x_k a'_{k+1}b'_1 y_1\ldots b'_l y_l b'_{l+1}y c_1 z_1\ldots c_m z_m\mid z,\\
Y\diamond_b(X\diamond_a Z)&=\sum a''_1 x_1\ldots a''_k x_k a''_{k+1}b''_1 y_1\ldots b''_l y_l b''_{l+1}y c_1 z_1\ldots c_m z_m\mid z,
\end{align*}
with
\begin{align*}
\sum a'_1\otimes\ldots a'_{k+1}\otimes b'_1\otimes\ldots\otimes b'_{l+1}&=\substack{\hspace{2mm}\\ \begin{tikzpicture}[line cap=round,line join=round,>=triangle 45,x=0.15cm,y=0.15cm]
\clip(-0.5,-0.1) rectangle (15.5,18.1);
\draw [line width=1.2pt] (-0.5,0.)-- (15.5,0.);
\draw [line width=1.2pt] (-0.5,18.)-- (15.5,18.);
\draw [line width=1.2pt] (0.,0.)-- (0.,6.);
\draw [line width=1.2pt] (6.,0.)-- (3.,6.);
\draw [line width=1.2pt] (9.,0.)-- (6.,6.);
\draw [line width=1.2pt] (3.,0.)-- (9.,6.);
\draw [line width=1.2pt] (12.,0.)-- (12.,6.);
\draw [line width=1.2pt] (15.,0.)-- (15.,6.);
\draw [line width=1.2pt] (12.,6.)-- (9.,12.);
\draw [line width=1.2pt] (15.,6.)-- (12.,12.);
\draw [line width=1.2pt] (9.,6.)-- (10.,7.);
\draw [line width=1.2pt] (15.,12.)-- (14.,11.);
\draw [line width=1.2pt] (6.,6.)-- (6.,12.);
\draw [line width=1.2pt] (3.,6.)-- (3.,12.);
\draw [line width=1.2pt] (0.,6.)-- (0.,12.);
\draw [line width=1.2pt] (3.,12.)-- (0.,18.);
\draw [line width=1.2pt] (6.,12.)-- (3.,18.);
\draw [line width=1.2pt] (9.,12.)-- (9.,18.);
\draw [line width=1.2pt] (12.,12.)-- (12.,18.);
\draw [line width=1.2pt] (15.,12.)-- (15.,18.);
\draw [line width=1.2pt] (0.,12.)-- (1.,13.);
\draw [line width=1.2pt] (6.,18.)-- (5.,17.);
\draw (2.7,9.) node[anchor=north west] {\tiny $\ldots$};
\draw (10.7,9.) node[anchor=north west] {\tiny $\ldots$};
\end{tikzpicture}}\:(a\otimes b\otimes a_1\otimes\ldots\otimes a_k\otimes b_1\otimes\ldots\otimes b_l),\\[2mm]
\sum a'_1\otimes\ldots a'_{k+1}\otimes b'_1\otimes\ldots\otimes b'_{l+1}&=\substack{\hspace{2mm}\\ \begin{tikzpicture}[line cap=round,line join=round,>=triangle 45,x=0.15cm,y=0.15cm]
\clip(-0.5,-0.1) rectangle (15.5,18.1);
\draw [line width=1.2pt] (-0.5,0.)-- (15.5,0.);
\draw [line width=1.2pt] (-0.5,18.)-- (15.5,18.);
\draw [line width=1.2pt] (0.,0.)-- (0.,6.);
\draw [line width=1.2pt] (6.,0.)-- (3.,6.);
\draw [line width=1.2pt] (9.,0.)-- (6.,6.);
\draw [line width=1.2pt] (3.,0.)-- (9.,6.);
\draw [line width=1.2pt] (12.,0.)-- (12.,6.);
\draw [line width=1.2pt] (15.,0.)-- (15.,6.);
\draw [line width=1.2pt] (9.,6.)-- (9.,12.);
\draw [line width=1.2pt] (3.,6.)-- (0.,12.);
\draw [line width=1.2pt] (6.,6.)-- (3.,12.);
\draw [line width=1.2pt] (9.,12.)-- (10.,13.);
\draw [line width=1.2pt] (12.,6.)-- (12.,12.);
\draw [line width=1.2pt] (15.,6.)-- (15.,12.);
\draw [line width=1.2pt] (0.,6.)-- (1.,7.);
\draw [line width=1.2pt] (6.,12.)-- (5.02,11.14);
\draw [line width=1.2pt] (0.,12.)-- (0.,18.);
\draw [line width=1.2pt] (3.,12.)-- (3.,18.);
\draw [line width=1.2pt] (6.,12.)-- (6.,18.);
\draw [line width=1.2pt] (12.,12.)-- (9.,18.);
\draw [line width=1.2pt] (15.,12.)-- (12.,18.);
\draw [line width=1.2pt] (15.,18.)-- (14.,17.);
\draw (1.7,9.) node[anchor=north west] {\tiny $\ldots$};
\draw (11.7,9.) node[anchor=north west] {\tiny $\ldots$};
\end{tikzpicture}}\:(a\otimes b\otimes a_1\otimes\ldots\otimes a_k\otimes b_1\otimes\ldots\otimes b_l).
\end{align*}
The two appearing diagrams are equal to
\[\begin{tikzpicture}[line cap=round,line join=round,>=triangle 45,x=0.15cm,y=0.15cm]
\clip(-0.5,-0.1) rectangle (15.5,12.1);
\draw [line width=1.2pt] (-0.5,0.)-- (15.5,0.);
\draw [line width=1.2pt] (-0.5,12.)-- (15.5,12.);
\draw [line width=1.2pt] (0.,0.)-- (0.,6.);
\draw [line width=1.2pt] (6.,0.)-- (3.,6.);
\draw [line width=1.2pt] (9.,0.)-- (6.,6.);
\draw [line width=1.2pt] (3.,0.)-- (9.,6.);
\draw [line width=1.2pt] (12.,0.)-- (12.,6.);
\draw [line width=1.2pt] (15.,0.)-- (15.,6.);
\draw [line width=1.2pt] (3.,6.)-- (0.,12.);
\draw [line width=1.2pt] (6.,6.)-- (3.,12.);
\draw [line width=1.2pt] (9.,6.)-- (10.,7.);
\draw [line width=1.2pt] (0.,6.)-- (1.,7.);
\draw [line width=1.2pt] (6.,12.)-- (5.02,11.14);
\draw [line width=1.2pt] (12.,6.)-- (9.,12.);
\draw [line width=1.2pt] (15.,6.)-- (12.,12.);
\draw [line width=1.2pt] (15.,12.)-- (14.,11.);
\draw (3.2,6.) node[anchor=north west] {\tiny $\ldots$};
\draw (11.7,6.) node[anchor=north west] {\tiny $\ldots$};
\end{tikzpicture}\:,\]
which implies (\ref{EQ21}) for $C_A(V)$.\\

Let $B$ be an $(A,\Phi)$-permutative algebra and let $\theta:V\longrightarrow B$ be a linear span. Let us prove that there exists a unique permutative algebra morphism $\Theta:C_A(V)\longrightarrow B$, extending $\theta$.\\

\textit{Uniqueness.} Let us remark that for any $x_1,x_2,\ldots,x_k,y\in V$, $a_1,\ldots,a_k\in A$, by definition of the product of $C_A(V)$,
\[x_1\diamond_{a_1} a_2 x_2\ldots a_k x_k\mid x=a_1 x_1\ldots a_k x_k\mid x.\]
A direct consequence is that $C_A(V)$ is generated by $V$, which implies that such a $\Theta$ is unique.\\

\textit{Existence.} Let us define $\Theta$ by
\[\Theta(a_1 x_1\ldots a_n kx_l\mid x)=\theta(x_1)\diamond_{a_1}(\ldots (\theta(x_k)\diamond_{a_k}\theta(x))\ldots).\]
By (\ref{EQ21}), this does not change if one permutes $a_1 x_1,\ldots, a_k x_k$, so this is well-defined. Let us consider
\[C_A'(V)=\{x\in C_A(V),\:\forall y\in C_A(V),\: f\forall a\in A,\:\Theta(x\diamond_a y)=\Theta(x)\diamond_a\Theta(y)\}.\]
Let $x_1$, $x_2\in C_A'(V)$ and $b\in A$. For any $y\in C_A(V)$ and $a\in A$,
\begin{align*}
\Theta(x_1\diamond_b x_2)\diamond_a\Theta(x_1)&=(\Theta(x_1)\diamond_b\Theta(x_2))\diamond_a\Theta(x_1)\\
&=\sum\Theta(x_1)\diamond_{a'\curvearrowright b'} (\Theta(x_2)\diamond_{a''\blacktriangleright b''}\Theta(x_1))\\
&=\sum\Theta(x_1)\diamond_{a'\curvearrowright b'}\Theta(x_2\diamond_{a''\blacktriangleright b''}x_1)\\
&=\Theta\left(\sum x_1\diamond_{a'\curvearrowright b'}( x_2\diamond_{a''\blacktriangleright b''}x_1)\right)\\
&=\Theta\left( x_1\diamond_b x_2)\diamond_a x_1\right).
\end{align*}
Therefore, 
$x_1\diamond_b x_2\in C'_A(V)$: we proved that $C'_A(V)$ is an $(A,\Phi)$-permutative subalgebra of $C_A(V)$. Moreover, for any $x\in V$, for any $y_1,\ldots,y_k,y\in V$, for any $a,b_1,\ldots,b_k\in A$,
\begin{align*}
\Theta(x\diamond_a b_1 y_1\ldots b_k y_k\mid y)&=\Theta(ax b_1 y_1\ldots b_k y_k\mid y)\\
&=\theta(x)\diamond_a (y_1\diamond_{b_1}(\ldots (y_k\diamond_{b_k} y)\ldots))\\
&=\Theta(x)\diamond_a\Theta(b_1 y_1\ldots b_k y_k\mid y),
\end{align*}
so $x\in C'_A(V)$. As $C_A(V)$ is generated by $V$, $C_A(V)=C'_A(V)$, which implies that $\Theta$ is an $(A,\Phi)$-permutative algebra morphism. Therefore, $C_A(V)$ is the free $(A,\Phi)$-permutative algebra generated by $V$.
\end{proof}

\begin{cor}
Let $(A,\Phi)$ be a finite-dimensional dual $\ell$CEDS. For any $n\geqslant 1$,
\[\dim(\perm_\Phi(n))=n\dim(A)^{n-1}.\]
\end{cor}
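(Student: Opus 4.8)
The plan is to read $\dim(\perm_\Phi(n))$ off from the free algebra computed in Proposition \ref{prop3.3}, using the standard principle that for any symmetric operad $\calP$ the dimension of $\calP(n)$ equals the dimension of the multilinear component (of multidegree $(1,\ldots,1)$) of the free $\calP$-algebra on $n$ generators. Indeed, writing the free algebra as $\bigoplus_{k\geqslant 1}\calP(k)\otimes_{\mathfrak{S}_k} V^{\otimes k}$ and grading $V=\K x_1\oplus\cdots\oplus\K x_n$ by placing each $x_i$ in its own degree, the part in which every $x_i$ occurs exactly once is $\calP(n)\otimes_{\mathfrak{S}_n}\K[\mathfrak{S}_n]\cong\calP(n)$, since the multilinear part of $V^{\otimes n}$ is the regular representation of $\mathfrak{S}_n$.

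First I would fix $V$ with basis $x_1,\ldots,x_n$ and, by Proposition \ref{prop3.3}, identify the free $(A,\Phi)$-permutative algebra on $V$ with $C_A(V)=S(A\otimes V)\otimes V$. Then I would isolate the multilinear component, namely the span of the monomials $a_1 x_{i_1}\ldots a_{n-1}x_{i_{n-1}}\mid x_{i_n}$ for which $\{i_1,\ldots,i_n\}=\{1,\ldots,n\}$; these are precisely the basis elements of $C_A(V)$ in which each generator appears exactly once, one factor sitting to the right of the bar and the remaining $n-1$ filling the symmetric part.

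The count is then immediate. The generator placed to the right of the bar can be chosen in $n$ ways. Once it is fixed, the remaining $n-1$ pairwise distinct generators fill $S(A\otimes V)$, each carrying a label in $A$; since these generators are distinct and the product of $S(A\otimes V)$ is commutative, commutativity only reorders the distinct factors, so the space of such multilinear monomials is canonically isomorphic to $A^{\otimes(n-1)}$, with tensor factors indexed by the $n-1$ remaining generators. Hence the multilinear component has dimension $n\dim(A)^{n-1}$, and by the principle above $\dim(\perm_\Phi(n))=n\dim(A)^{n-1}$.

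I do not expect a serious obstacle here: the only point requiring care is the passage from the symmetric algebra to a plain tensor power, which is legitimate precisely because all the generators involved are distinct, so that no genuine symmetrisation (with its attendant loss of dimension) occurs. The finite-dimensionality of $A$ enters only to ensure that $\dim(A)^{n-1}$ is finite.
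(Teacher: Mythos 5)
Your argument is correct and is exactly the route the paper intends: the corollary is stated without proof as an immediate consequence of Proposition \ref{prop3.3}, and the standard identification of $\perm_\Phi(n)$ with the multilinear component of the free algebra $C_A(V)=S(A\otimes V)\otimes V$ on $n$ generators gives $n$ choices for the generator to the right of the bar and $\dim(A)^{n-1}$ for the types carried by the remaining $n-1$ distinct factors. Your remark that distinctness of the generators makes the symmetric power collapse to a plain tensor power is the one point worth making explicit, and you made it.
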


\subsection{From permutative to generalized permutative algebras}

\begin{notation}
1. We denote by $\perm$ the operad of permutative algebras, that is to say binary algebras $(A,\diamond)$ such that for any $a,b,c\in A$,
\begin{align*}
(x\diamond y)\diamond z&=x\diamond (y\diamond z),&x\diamond(y\diamond z)&=y\diamond(x\diamond z).
\end{align*}
The operad $\perm$ is the Koszul dual of the operad $\prelie$.\\

2. We denote by $\perm^{(0)}$ the operad of $0$-permutative algebras, that is to say binary algebras $(A,\diamond)$ such that for any $a,b,c\in A$,
\begin{align*}
(x\diamond y)\diamond z&=0,&x\diamond(y\diamond z)&=y\diamond(x\diamond z).
\end{align*}
The operad $\perm^{(0)}$ is the Koszul dual of the operad of NAP algebras.
\end{notation}

We now consider operad morphisms from $\perm$ or $\perm^{(0)}$ to $\perm_\Phi$, or equivalently permutative or $0$-permutative products in $\perm_\Phi$. We shall need the following notion:

\begin{defi}
Let $(A,\Phi)$ be an $\ell$EAS. A weak special vector of $(A,\Phi)$ is an element $a\in A$ such that
\[\Phi(a\otimes a)+\tau\circ\Phi(a\otimes a)=a\otimes a.\]
\end{defi}

\begin{prop}\label{prop3.6}
Let $(A,\Phi)$ be a dual $\ell$CEDS. The permutative products in $\perm_\Phi$ are of one of the following form:
\begin{itemize}
\item $\diamond_a$, where $a$ is a special vector of $\Phi$ of eigenvalue $0$.
\item $\diamond_a+\diamond_a^{op}$, where $a$ is a weak special vector.
\end{itemize}
The $0$-permutative products in $\perm_\Phi$ are the products $\diamond_a$, where $a$ is a special vector of eigenvalue $0$.
\end{prop}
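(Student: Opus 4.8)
The plan is to reduce everything to an explicit computation inside the free $(A,\Phi)$-permutative algebra $C_A(V)=S(A\otimes V)\otimes V$ of Proposition \ref{prop3.3}, evaluated on three generators, and then to read off the defining relations of $\perm$ and of $\perm^{(0)}$ by comparing coefficients. First I would pin down $\perm_\Phi(2)$: by the dimension formula $\dim(\perm_\Phi(2))=2\dim(A)$ (the corollary to Proposition \ref{prop3.3}), and since on two generators the operations $\diamond_a$ and $\diamond_a^{op}$ produce the distinct monomials $ax\mid y$ and $ay\mid x$, the families $\{\diamond_a\}_{a\in A}$ and $\{\diamond_a^{op}\}_{a\in A}$ are linearly independent and together span $\perm_\Phi(2)$. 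Hence every binary operation is $p=\diamond_a+\diamond_b^{op}$ for a unique pair $(a,b)\in A^2$, and a morphism from $\perm$ (resp. $\perm^{(0)}$) to $\perm_\Phi$ is exactly such a $p$ satisfying the permutative (resp. $0$-permutative) relations.

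Next I would compute, for generators $x,y,z$ of $C_A(V)$, the three ternary expressions $p(p(x,y),z)$, $p(x,p(y,z))$ and $p(y,p(x,z))$, using $x\diamond_c(w_1\cdots w_n\mid w)=cx\,w_1\cdots w_n\mid w$ together with the braided rule (\ref{EQ22}). Each expression is a sum of monomials of exactly three shapes, distinguished by which generator is the head; the two remaining generators then carry a coefficient in $A\otimes A$ obtained by applying $\Phi$ to the relevant pair. The head-$z$, head-$y$, head-$x$ coefficients of $p(p(x,y),z)$ come out as $\Phi(a\otimes a)+\tau\Phi(a\otimes b)$, $a\otimes b$, $b\otimes b$, while those of $p(x,p(y,z))$ are $a\otimes a$, $a\otimes b$, $\Phi(b\otimes a)+\tau\Phi(b\otimes b)$. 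The flips $\tau$ enter precisely because the two non-head factors may be matched to their generators in either order, and keeping this straight is the one delicate point. Since $\perm_\Phi(3)$ has dimension $3\dim(A)^2$ and is spanned by these three shapes, equality of operations amounts to equality of the corresponding coefficients in $A\otimes A$.

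For the permutative case, associativity $p(p(x,y),z)=p(x,p(y,z))$ and left-commutativity $p(x,p(y,z))=p(y,p(x,z))$ give, respectively,
\begin{align*}
\Phi(a\otimes a)+\tau\Phi(a\otimes b)&=a\otimes a, & \Phi(b\otimes a)+\tau\Phi(b\otimes b)&=b\otimes b, & \Phi(b\otimes a)+\tau\Phi(b\otimes b)&=a\otimes b.
\end{align*}
Subtracting the last two yields $(a-b)\otimes b=0$, so either $b=0$ or $a=b$. If $b=0$ the system collapses to $\Phi(a\otimes a)=a\otimes a$, so $a$ is a special vector and $p=\diamond_a$; if $a=b$ the three equations coincide with $\Phi(a\otimes a)+\tau\Phi(a\otimes a)=a\otimes a$, so $a$ is a weak special vector and $p=\diamond_a+\diamond_a^{op}$. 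Conversely, substituting each condition back makes all coefficient identities hold, giving sufficiency.

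For the $0$-permutative case, the relation $p(p(x,y),z)=0$ forces the three coefficients above to vanish: the head-$x$ coefficient $b\otimes b=0$ gives $b=0$, and then the head-$z$ coefficient reduces to $\Phi(a\otimes a)=0$, while the head-$y$ coefficient $a\otimes b$ vanishes automatically; the surviving relation $p(x,p(y,z))=p(y,p(x,z))$ holds by (\ref{EQ21}). Thus the $0$-permutative products are exactly the $\diamond_a$ with $a$ a special vector of eigenvalue $0$, and sufficiency is again immediate. The only real obstacle in the whole argument is the correct placement of the flips $\tau$ when identifying the two non-head factors with their generators; once that bookkeeping is settled, the dichotomy $b=0$ versus $a=b$ cleanly separates the two permutative families.
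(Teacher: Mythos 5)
Your proof is correct and follows essentially the same route as the paper's: expand $p=\diamond_a+\diamond_b^{op}$ on three generators of the free algebra $C_A(V)$, read off the coefficients of the three monomial shapes, obtain the system (\ref{EQ23})--(\ref{EQ25}), and split on $(a-b)\otimes b=0$ (your extra remarks justifying that every element of $\perm_\Phi(2)$ has this form, via the dimension count, only make the argument more complete). Note that in the $b=0$ branch of the permutative case both your derivation and the paper's own proof yield $\Phi(a\otimes a)=a\otimes a$, i.e.\ a special vector of eigenvalue $1$, so the first bullet of the statement, which reads ``eigenvalue $0$'', is evidently a misprint rather than a defect of your argument.
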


\begin{proof}
Let $\diamond=\diamond_a+\diamond_b^{op}$ be any element of $\perm_\Phi(2)$, with $a,b\in A$. Let us apply it in the free $\perm_\Phi$-algebra on three generators $x,y,z$. 
\begin{align*}
(x\diamond y)\diamond z&=(a'\curvearrowright a' x a''\blacktriangleright a'' y+a'\curvearrowright b'y a''\blacktriangleright b''x)\mid z+axbz\mid y+bybz\mid x,\\
x\diamond (y\diamond z)&=axay\mid z+axbz\mid y+(b'\curvearrowright a' y b''\blacktriangleright a'' zb'\curvearrowright b' z b''\blacktriangleright b'' y\mid x,\\
y\diamond (x\diamond z)&=axay\mid z+aybz\mid x+(b'\curvearrowright a' x b''\blacktriangleright a'' zb'\curvearrowright b' z b''\blacktriangleright b'' x\mid y.
\end{align*}
Identifying terms $cxdy\mid z$, $cxdz\mid y$ and $cycz\mid x$, if $\diamond$ is a permutative product,
\begin{align}
\label{EQ23}\Phi(a\otimes a)+\tau\circ\Phi(a\otimes b)&=a\otimes a,\\
\label{EQ24}\Phi(b\otimes a)+\tau\circ\Phi(b\otimes b)&=b\otimes b,\\
\label{EQ25}\Phi(b\otimes a)+\tau\circ\Phi(b\otimes b)&=a\otimes b.
\end{align}
By (\ref{EQ25})-(\ref{EQ24}), $(a-b)\otimes b=0$, so $a=b$ or $b=0$. If $b=0$, then by (\ref{EQ23}), $a$ is a special vector of eigenvalue $1$. If $a=b$, by (\ref{EQ23}), $a$ is a weak special vector. The converse implication is immediate.\\

If $\diamond$ is a $0$-permutative product,
\begin{align}
\label{EQ26}\Phi(a\otimes a)+\tau\circ\Phi(a\otimes b)&=0,\\
\label{EQ27}a\otimes b&=0,\\
\label{EQ28}b\otimes b&=0,\\
\label{EQ29}\Phi(b\otimes a)+\tau\circ\Phi(b\otimes b)&=a\otimes b.
\end{align}
By (\ref{EQ28}), $b=0$ and, by (\ref{EQ26}), $a$ is a special vector of eigenvalue $0$. The converse implication is immediate.\end{proof}

We do not know any $\ell$EAS with a nonzero weak special vector, and we only have negative results on their existence:

\begin{prop}\label{prop3.7}\begin{enumerate}
\item Let $(A,\Phi)$ be a two-dimensional $\ell$EAS. Then its only weak special vector is $0$.
\item Let $\Omega$ be a semigroup. If $\Omega$ is a group or if $\Omega$ is finite, then the only weak special vector of $\leas(\K\Omega)$ is $0$.
\item Let $(\Omega,\star)$ be a group and let $(A,\Phi)$ be the linearization of $\eas'(\Omega,\star)$. Then the only weak special vector of $(A,\Phi)$ is $0$.
\end{enumerate}\end{prop}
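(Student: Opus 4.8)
The plan is to treat the three parts by a common reduction followed by separate computations. Writing the weak-special condition as $\Phi(a\otimes a)+\tau\circ\Phi(a\otimes a)=a\otimes a$, I first observe that it says exactly that the symmetric part of $\Phi(a\otimes a)$ is $\tfrac12\,a\otimes a$; equivalently $\Phi(a\otimes a)=\tfrac12\,a\otimes a+T$ with $\tau(T)=-T$. Consequently, whenever one can show $T=0$ in a given example, $a$ becomes a special vector of eigenvalue $\tfrac12$, and by \cite[Lemma 4.4]{FoissyEAS} (nonzero special vectors have eigenvalue in $\{0,1\}$) this forces $a=0$. For the two group cases the conclusion will drop out even more directly in coordinates.

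For part (2) in the group case and for part (3), I would write $\Phi$ explicitly from the linearizations of $\eas(\Omega,\star)$ and $\eas'(\Omega,\star)$ (Example \ref{ex1.4}), expand $\Phi(a\otimes a)$ for $a=\sum_\alpha a_\alpha\,\alpha$ as in Example \ref{ex2.5}, and read off the coefficient of $\gamma\otimes\delta$ in the weak-special identity. For $\eas'(\Omega,\star)$ this yields $a_{\delta\star\gamma}a_\gamma+a_{\gamma\star\delta}a_\delta=a_\gamma a_\delta$ for all $\gamma,\delta$; specialising $\delta$ to the unit $e$ gives $a_\gamma^2=0$, hence $a=0$. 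For $\eas(\Omega,\star)$ one gets $a_\delta\,a_{\delta^{\star-1}\star\gamma}+a_\gamma\,a_{\gamma^{\star-1}\star\delta}=a_\gamma a_\delta$; putting $\gamma=\delta$ gives $2a_\gamma a_e=a_\gamma^2$, then $\gamma=\delta=e$ gives $a_e=0$, and back-substitution gives $a_\gamma^2=0$, so again $a=0$. The only ingredient is a two-sided unit, which is precisely why these arguments are confined to groups.

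The finite semigroup case of (2) is the main obstacle, exactly because there is no unit at which to specialise. Here I would exploit the counit $\varepsilon\colon\K\Omega\to\K$, $\varepsilon(\alpha)=1$: applying $\id\otimes\varepsilon$ and $\varepsilon\otimes\id$ to the weak-special identity collapses it to $a\star a=0$ in the semigroup algebra and $\varepsilon(a)=0$, so the right multiplication $R_a$ satisfies $R_a^2=R_{a\star a}=0$. The full identity, in coordinates, reads $a_\delta(\delta\star a)_\gamma+a_\gamma(\gamma\star a)_\delta=a_\gamma a_\delta$, whose diagonal $\gamma=\delta$ gives $a_\gamma\big(2(\gamma\star a)_\gamma-a_\gamma\big)=0$. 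To pass from $a\star a=0$ and these relations to $a=0$ one must use finiteness genuinely: I would grade $\K\Omega$ by the $\mathcal{J}$-preorder of $\Omega$, choose a $\mathcal{J}$-maximal element $\gamma_0$ of $\mathrm{supp}(a)$, and use that every product lies $\mathcal{J}$-below its factors, so that on the top class the equations reduce to those of a completely simple (Rees matrix) semigroup, where the identity can be solved directly to contradict $a_{\gamma_0}\neq0$. The delicate point — and the step I expect to be hardest to write cleanly — is controlling the term $(\gamma\star a)_\gamma$, a sum over the right units of $\gamma$, against $\mathcal{J}$-maximality.

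Finally, for part (1) I would fix a basis $\{a,b\}$ of the two-dimensional space adapted to a hypothetical nonzero weak-special vector $a$. Since the antisymmetric part of $A\otimes A$ is one-dimensional, the reduction above gives $\Phi(a\otimes a)=\tfrac12\,a\otimes a+\lambda\,(a\otimes b-b\otimes a)$ for some $\lambda\in\K$. I would then parametrise $\Phi$ fully and impose the braid equation (\ref{EQ14}), testing it on $a\otimes a\otimes a$, $a\otimes a\otimes b$ and $b\otimes a\otimes a$; the braid relation controls $\Phi$ on the mixed tensors (which the weak-special condition alone leaves free) and I expect it to force $\lambda=0$, whence $\Phi(a\otimes a)=\tfrac12\,a\otimes a$ exhibits $a$ as special of eigenvalue $\tfrac12$, so $a=0$ by \cite[Lemma 4.4]{FoissyEAS}. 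Equivalently one may invoke the classification of two-dimensional $\ell$EAS from \cite{FoissyEAS} and check the weak-special condition on each model; the computation is finite either way, the only care being that the braid equation, not the weak-special relation, is what pins down $\Phi$ on the mixed tensors.
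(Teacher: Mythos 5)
Your parts (2, group case) and (3) are complete and correct; in fact your treatment of (3) -- specialising (\ref{EQ31}) at $\delta=e$ to get $a_\gamma^2=0$ -- is shorter than the paper's, which instead runs an induction on $a_{\alpha^{n+1}}=\tfrac1n a_\alpha$ along powers of a support element. The two remaining items, however, are only strategies, not proofs, and each has a concrete gap.

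For part (1), everything after ``I would then parametrise $\Phi$ fully and impose the braid equation'' is deferred: you \emph{expect} (\ref{EQ14}) to force $\lambda=0$, but you do not verify it, and the expectation is doubtful. In the paper's computation the antisymmetric coefficient of $\Phi(x\otimes x)$ (your $\lambda$, its $a_{21}$) is never shown to vanish; the contradiction comes from incompatible constraints on \emph{other} entries of $\Phi$ (the coefficients of $\Phi(y\otimes x)$ and $\Phi(y\otimes y)$, after normalising $a_{42}\in\{0,1\}$), extracted from two specific coefficients of the $8\times 8$ matrix of the braid defect. So the route ``show $\lambda=0$, then invoke \cite[Lemma 4.4]{FoissyEAS} with eigenvalue $\tfrac12$'' is not known to be available; a full case analysis of the braid equation is needed and is absent. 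For part (2) in the finite case, your reduction to $a\star a=0$ (note that $\varepsilon(a)=0$ requires applying $\varepsilon\otimes\varepsilon$, not the one-sided counits, each of which only returns $a\star a=0$) is fine, but the passage from there to $a=0$ is exactly the point you leave open: the $\mathcal{J}$-maximality/Rees-matrix argument is sketched, the key identity on the top class is not solved, and you yourself flag the control of $(\gamma\star a)_\gamma$ as unresolved. The paper closes this by a different combinatorial device: it puts the quasiorder $\alpha\preceq\beta\Leftrightarrow(\alpha\beta=\alpha$ or $\alpha=\beta)$ on the support of $a$, takes a $\preceq$-maximal equivalence class $H$, observes that the coefficient relations restricted to $H$ become a linear system whose matrix (with diagonal entries $\pm1$ and off-diagonal entries $2$) is invertible, and concludes. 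Until you either carry out your Rees-matrix reduction or substitute an argument of this kind, parts (1) and (2, finite) remain unproved.
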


\begin{proof}
1. Let us assume that $(A,\Phi)$ has a nonzero weak special vector $x$, which we complete in a basis $(x,y)$ of $A$.nThe basis of $\Phi$ in the basis $(x\otimes x,x\otimes y,y\otimes x,y\otimes y)$ is denoted by $M=(a_{ij})_{1\leqslant i,j\leqslant 4}$. Then
\begin{align*}
\Phi(x\otimes x)+\tau\circ\Phi(x\otimes x)&=2a_{11} x\otimes x+(a_{21}+a_{31})(x\otimes y+y\otimes x)+2a_{41}y\otimes y=x\otimes x,
\end{align*}
so $a_{11}=\dfrac{1}{2}$, $a_{31}=-a_{21}$ and $a_{41}=0$. Considering the matrix of the map 
$(\id\otimes\Phi)\circ (\Phi\otimes\id)\circ (\id\otimes\Phi)-(\Phi\otimes\id)\circ (\id\otimes\tau)\circ (\Phi\otimes\id)$
in the basis of tensors products of $x$ and $y$, we obtain a $8\times 8$ matrix, which all coefficients are zero. Up to the replacement of $y$ by $a_{42}y$ if $a_{42}$ is nonzero, we can assume that $a_{42}=0$ or $1$.
\begin{itemize}
\item If $a_{42}=0$, coefficient (6,1) is $\dfrac{a_{43}}{2}=0$, whereas coefficient (7,5) is $-\dfrac{a_{43}^2}{2}=0$: this is a contradiction.
\item If $a_{42}=1$, coefficient (7,1) is $-a_{43}-1=0$, so $a_{43}=-1$. Coefficient (6,1) is $-a_{33}-\dfrac{1}{2}=0$,
whereas coefficient (7,5) is $a_{33}-\dfrac{1}{2}=0$: this is a contradiction. 
\end{itemize}
So there is no nonzero weak special vector in $A$.\\

2. Let $\displaystyle a=\sum_{\alpha\in\Omega} a_\alpha a\in\K\Omega$.
\begin{align}
\nonumber&\mbox{$a$ is a weak special vector of }\leas(\K\Omega)\\
\nonumber&\Longleftrightarrow\sum_{\alpha,\beta\in\Omega} a_\alpha a_\beta\alpha\otimes\beta=\sum_{\alpha,\beta\in\Omega} a_\alpha a_\beta(\alpha\beta\otimes\alpha+\alpha\otimes\alpha\beta)\\
\label{EQ30} &\Longleftrightarrow\forall\alpha,\beta\in\Omega,\: a_\alpha a_\beta=\left(\sum_{\gamma\in\Omega,\:\beta\gamma=\alpha} a_\gamma\right)a_{\beta}+\left(\sum_{\gamma\in\Omega,\:\alpha\gamma=\beta} a_\gamma\right)a_{\alpha}.
\end{align}
Let $a$ be a nonzero weak special vector of $\leas(\K\Omega)$. We put $G=\{\alpha\in\Omega,\:a_\alpha\neq 0\}$. Then $G$ is nonempty.\\

We first assume that $\Omega$ is a group. Then (\ref{EQ30}) becomes
\begin{align*}
&\forall\alpha,\beta\in\Omega,&a_\alpha a_\beta&=a_\beta a_{\beta^{-1}\alpha}+a_{\alpha}a_{\alpha^{-1}\beta}.
\end{align*}
Taking $\alpha=\beta=e_G$, we obtain $a_{e_g}^2=2a_{e_G}^2$, so $a_{e_G}=0$.Taking $\alpha=\beta\in\Omega$, we obtain $a_{\alpha}^2=2a_{e_G}a_\alpha=0$, so $a_\alpha=0$. There is no nonzero weak special vector in this case.\\

We now assume that $\Omega$ is finite. We define a relation $\preceq$ on $G$ by
\[\alpha\preceq\beta\Longleftrightarrow\alpha\beta=\alpha\mbox{ or }\alpha=\beta.\]
This relation is obviously reflexive. Let us assume that $\alpha\preceq\beta$ and $\beta\preceq\gamma$. If $\alpha=\beta$ or $\beta=\gamma$, then obviously $\alpha\preceq\gamma$. Otherwise,
\[\alpha\gamma=(\alpha\beta)\gamma=\alpha(\beta\gamma)=\alpha\beta=\alpha.\]
So $\alpha\preceq\gamma$. We proved that $\preceq$ is transitive, so $\preceq$ is a quasi-order. Consequently:
\begin{itemize}
\item The relation $\sim$ defined on $G$ by $\alpha\sim\beta$ if, $\alpha\preceq\beta$ and $\beta\preceq\alpha$ is an equivalence.
\item The relation $\overline{\preceq}$ defined on $G/\sim$ by $\overline{\alpha}\overline{\preceq}\overline{\beta}$ if $\alpha\preceq\beta$ is an order.
\end{itemize}
The poset $(G/\sim,\overline{\preceq})$ is finite, as $\Omega$ is finite; so it has a maximal element, which is a class of $\sim$ denoted by $H$. By construction of $H$:
\begin{itemize}
\item $H$ is nonempty and, for any $\alpha\in H$, $a_\alpha\neq 0$.
\item If $\alpha\in H$ and $\beta\in G$ such that $\alpha\beta=\alpha$, then $\beta\in H$.
\item For any $\alpha,\beta\in H$, $\alpha=\beta$ or $\alpha\beta=\alpha$. 
\end{itemize}
We put 
\begin{align*}
H_1&=\{\alpha\in H,\alpha\alpha=\alpha\},&H_{-1}&=\{\alpha\in H,\alpha\alpha\neq\alpha\}.
\end{align*}
Then (\ref{EQ30}) gives
\begin{align*}
&\forall\alpha\in H_1,&a_\alpha&=2\sum_{\gamma\in H}a_\gamma,\\
&\forall\alpha\in H_{-1},&a_\alpha&=2\sum_{\gamma\in H,\:\gamma\neq\alpha }a_\gamma,
\end{align*}
which can be reformulate as 
\begin{align*}
&\forall\alpha\in H_1,&a_\alpha+2\sum_{\gamma\in H,\gamma\neq\alpha }a_\gamma&=0,\\
&\forall\alpha\in H_{-1},&-a_\alpha+2\sum_{\gamma\in H,\:\gamma\neq\alpha }a_\gamma&=0.
\end{align*}
Hence, after a convenient indexation, the vector $(a_\alpha)_{\alpha\in H}$ is a nonzero vector of the kernel of the matrix
\[\begin{pmatrix}
1&2&\ldots&2&
2&\ldots&\ldots&2\\
2&\ddots&\ddots&\vdots&
\vdots&&&\vdots\\
\vdots&&\ddots&2&
\vdots&&&\vdots\\
2&\ldots&2&1&
2&\ldots&\ldots&2\\
2&\ldots&\ldots&2&-1&2&\ldots&2\\
\vdots&&&\vdots&2&\ddots&\ddots&\vdots\\
\vdots&&&\vdots&\vdots&&\ddots&2\\
2&\ldots&\ldots&2&2&\ldots&2&-1
\end{pmatrix}\]
We leave to the reader the proof that this matrix is invertible: this is a contradiction, so $\leas(\K\Omega)$ has no nonzero weak special vector.\\

3. Let $\displaystyle a=\sum_{\alpha\in\Omega} a_\alpha a\in\K\Omega$.
\begin{align}
\nonumber&\mbox{$a$ is a weak special vector of the linearization of }\eas'(\Omega,\star)\\
\nonumber&\Longleftrightarrow\sum_{\alpha,\beta\in\Omega} a_\alpha a_\beta\beta\otimes\beta=\sum_{\alpha,\beta\in\Omega} a_\alpha a_\beta(\beta\otimes\alpha\star\beta^{\star -1}+\alpha\star\beta^{\star-1}\otimes\beta)\\
\label{EQ31} &\Longleftrightarrow\forall\alpha,\beta\in\Omega,\: a_\alpha a_\beta=a_{\beta\alpha}a_\alpha+a_{\alpha\beta}a_\beta.
\end{align}
Let $a$ be a nonzero weak special vector and let $\alpha\in\Omega$, such that $a_\alpha\neq 0$. For $\beta=\alpha^n$ in (\ref{EQ31}):
\[a_\alpha a_{\alpha^n}=a_{\alpha^{n+1}}a_\alpha+a_{\alpha^{n+1}}a_{\alpha^n}.\]
A direct induction proves that for any $n\geqslant 1$,
\[a_{\alpha^{n+1}}=\frac{1}{n}a_\alpha.\]
Consequently, if $\beta\in\Omega$, such that $a_\beta\neq 0$, then $a_\beta=3 a_{\beta^4}=a_{\beta^2}$. Moreover, applying this for $\alpha=\beta^2$, $a_{\beta^4}=a_{\beta^2}$. 
Finally, $a_\beta=3 a_{\beta^4}=a_{\beta^4}$, so $a_{\beta^4}=0$ and $a_\beta=0$, this is a contradiction. There is no nonzero weak special vector in this case.\end{proof}

\section{Hopf algebras and bialgebras from generalized pre-Lie algebras}

\subsection{Dual bialgebras}

As shown in\cite{Foissy55}, the symmetric algebra generated by the dual of the coinvariants space of the operad $\prelie_\Phi$ inherits a bialgebra structure, induced by the operadic composition.
In order to avoid unnecessary technical difficulties, we restrict ourselves to the case of a finite-dimensional dual $\ell$CEDS $(A,\Phi)$. 
We identify the graded dual $\g_A^*$ of $\g_A$ with $\g_{A^*}$, through the pairing defined in the following way: if $T$ is an $A^*$-typed and $\calD$-decorated tree and $T'$ is an $A$-typed tree and $\calD$-decorated tree, then
\[\langle T,T'\rangle=\sum_{\sigma\in\mathrm{Iso}(T,T')}\prod_{e\in E(T)} f_e(a_{\sigma(e)})\prod_{v\in V(T)}\delta_{d_v,d'_{\sigma(v)}},\]
with the following notations:
\begin{itemize}
\item $\mathrm{Iso}(T,T')$ is the set of isomorphisms of rooted trees from $T$ to $T'$. If $\sigma\in\mathrm{Iso}(T,T')$ and $e=(x,y)$ is an edge of $T$, we denote by $\sigma(e)=(\sigma(x),\sigma(y))$ the corresponding edge of $T'$.
\item For any $e\in E(T)$, $f_e$ is the type of $e$ in $T$ (and belongs to $A^*$); for any $e'\in E(T')$, $a_{e'}$ is the type of $e'$ in $T'$ (and belongs to $A$).
\end{itemize}

We shall use the following definitions:

\begin{defi}\label{defi4.1} 
We assume that $(\calD,\times)$ is a commutative semigroup. Let $F$ be an $A$-typed and $\calD$-decorated forest and $I$ be a subset of the set $E(F)$ of edges of $F$. 
\begin{itemize}
\item We denote by $F_{\mid I}$ the forest obtained by deleting all the edges of $F$which does not belong to $I$. As its set of vertices is the set of vertices of $F$ and its set of edges is $I$, it is naturally an $A$-typed and $\calD$-decorated forest.
\item We denote by $F_{/I}$ the forest obtained by contracting all the edges of $F$ which belong to $I$.
As its set of edges is $E(F)\setminus I$, it is naturally an $A$-typed forest. Any of its vertices $v$ can be seen as the contraction of a subtree $T_v$ of $F$: we decorate $v$ by the product in $\calD$ of the decorations of the vertices of $T_v$. 
With these decorations, $F_{/I}$ is an $A$-typed and $\calD$-decorated forest.
\end{itemize}
Note that $E(F)=E(F_{\mid I})\sqcup E(F_{/I})$. We make $F_{\mid I}\otimes F_{/I}$ a sum of tensors of $A$-typed and $\calD$-decorated forests in the following process:\\

For each edge $e\notin I$: let us denote by $e_1,\ldots,e_k$ the edges of $F$ in the path from the root of the tree containing $e$ to the first extremity of $e$ which belongs to $I$; denote by $a$ the type of $e$ and by $a_i$ the type of $e_i$. Compute
\begin{align*}
&\sum_{i=1}^p a'_i\otimes a'_{1,i}\otimes\ldots\otimes a'_{k,i}\\
&=\left(\Phi\otimes\id^{\otimes (k-1)}\right)\circ\left(\id\otimes\Phi\otimes\id^{\otimes (k-2)}\right)\circ\ldots\circ\left(\id^{\otimes (k-1)}\otimes\ldots\otimes\Phi\right)(a_1\otimes\ldots\otimes a_k\otimes a).
\end{align*}
and take the sum for $1\leq i\leq p$ of tensors of forests obtained by typing $e$ by $a'_i$ and $e_j$ by $a'_{j,i}$ for any $j$ in $F_{\mid I}\otimes F_{/I}$.\\

This sum of tensor products is denoted by $F_{\mid I}\otimes_\Phi F_{/I}$.
\end{defi}

Dualizing the combinatorial description of the operadic composition, we obtain a description of the dual bialgebra induced by the operad $\prelie_{\Phi^*}$:

\begin{theo}
Let $(\calD,\times)$ be an associative and commutative semigroup and $(A,\Phi)$ be a finite-dimensional dual $\ell$CEDS. Let us consider the symmetric algebra generated by $\g_{A,\calD}$, with its usual product $m$.
For any $A$-typed and $\calD$-decorated forest $F$, we put
\[\delta(F)=\sum_{I\subseteq E(F)} F_{\mid I}\otimes_\Phi F_{/I}.\]
Then $(S(\g_{A,\calD}),m,\delta)$ is a bialgebra, denoted by $B_{\Phi,(\calD,\times)}$.
\end{theo}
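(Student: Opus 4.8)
The plan is to derive the statement from the general construction of \cite{FoissyOperads}, in which the symmetric algebra generated by the invariants of the dual of an operad $\calP$ carries a bialgebra structure $(S(W),m,\delta)$ whose coproduct $\delta$ is obtained by dualizing the operadic composition. The relevant operad here is $\prelie_{\Phi^*}$: since $(A,\Phi)$ is a dual $\ell$CEDS, $(A^*,\Phi^*)$ is an $\ell$CEDS by the transposition proposition, so Theorem \ref{theo2.2} applies to $\Phi^*$ and describes the components of $\prelie_{\Phi^*}$ by $A^*$-typed, $\calD$-decorated trees. The pairing introduced just before the statement identifies $\g_{A,\calD}$ with the graded dual of $\g_{A^*,\calD}$; hence $S(\g_{A,\calD})$ is exactly the algebra produced by the construction, with $m$, the concatenation of forests, playing the role of the free commutative product.

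First I would recall the combinatorial form of the operadic composition of $\prelie_{\Phi^*}$ obtained earlier: $T\circ_d T'$ inserts $T'$ at the vertex $d$ of $T$ and redistributes the subtrees issued from $d$ by iterated graftings, the types along the relevant paths being modified by iterated applications of $\Phi^*$. Transposing this composition through the pairing turns insertion into extraction--contraction: each summand of $\delta(F)$ is indexed by a set $I$ of edges, the extracted forest $F_{\mid I}$ recording the pieces that were inserted and the contracted tree $F_{/I}$ recording the outer operation. The fusion of the input slots of $T'$ into the single slot $d$ dualizes to the contraction of a subtree to one vertex, whose decoration is forced to be the product in $\calD$ of the decorations of the contracted vertices; this is precisely where the commutative semigroup structure of $(\calD,\times)$ enters and makes the decoration well defined. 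Finally, the transpose of the iterated $\Phi^*$ appearing in the composition is the iterated $\Phi$ along the paths of $F$, which is by definition the type modification encoded in $F_{\mid I}\otimes_\Phi F_{/I}$ of Definition \ref{defi4.1}. Matching these descriptions identifies $\delta$ with the dual of the composition of $\prelie_{\Phi^*}$.

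Granting this identification, coassociativity of $\delta$ and the compatibility $\delta(FG)=\delta(F)\delta(G)$ become formal consequences: the former dualizes the associativity of the operadic composition, while the latter reflects that a set of edges of a disjoint union $FG$ splits as $I_F\sqcup I_G$ and that all type modifications stay within the tree carrying the edge. The counit dualizes the operadic unit and is the algebra morphism sending every one-vertex tree to $1$ (equivalently, the projection onto the part of edge-degree zero, the coproduct being graded by the number of edges). The main obstacle is the verification sketched in the previous paragraph, namely that the extraction--contraction with $\Phi$-modified types faithfully transposes the composition; here the defining axioms of the dual $\ell$CEDS $\Phi$ are exactly what is needed. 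The braid equation \eqref{EQ14} ensures that the iterated application of $\Phi$ along a path is independent of the bracketing, so that $F_{\mid I}\otimes_\Phi F_{/I}$ is well defined, and the dual commutation relation \eqref{EQ16} ensures that permuting the factors attached to a contracted vertex does not change the result, which is what guarantees both that $\delta$ takes values in the symmetric algebra and that it is coassociative.
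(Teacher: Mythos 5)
Your proposal follows exactly the route the paper takes: the theorem is obtained by invoking the general construction of \cite{FoissyOperads} for the operad $\prelie_{\Phi^*}$ and transposing its combinatorial composition through the pairing identifying $\g_{A,\calD}$ with the graded dual of $\g_{A^*,\calD}$, the extraction--contraction with $\Phi$-modified types being the dual of insertion and the semigroup $(\calD,\times)$ serving to decorate contracted vertices. The paper gives no further written proof beyond this dualization argument, so your additional remarks on coassociativity, multiplicativity and the role of \eqref{EQ14} and \eqref{EQ16} are a faithful (and slightly more detailed) rendering of the same argument.
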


\begin{example}
Let $a,b\in A$ and $d,d',d''\in\calD$. 
\begin{align*}
\delta\left(\xymatrix{\rond{d}}\right)&=\xymatrix{\rond{d}}\otimes\xymatrix{\rond{d}},\\[3mm]
\delta\left(\substack{\hspace{3mm}\\ \xymatrix{\rond{d'}\ar@{-}[d]_a\\ \rond{d}}}\:\right)&=\substack{\hspace{3mm}\\ \xymatrix{\rond{d'}\ar@{-}[d]_a\\ \rond{d}}}\otimes\xymatrix{\rond{dd'}}+\xymatrix{\rond{d}}\:\xymatrix{\rond{d'}}\otimes\substack{\hspace{3mm}\\ 
\xymatrix{\rond{d'}\ar@{-}[d]_a\\ \rond{d}}},\\[3mm]
\delta\left(\substack{\hspace{5mm}\\ \xymatrix{\rond{d''}\ar@{-}[d]_b\\ \rond{d'}\ar@{-}[d]_a\\ \rond{d}}}\:\right)&=\xymatrix{\rond{d}}\:\xymatrix{\rond{d'}}\:\xymatrix{\rond{d''}}\otimes 
\substack{\hspace{5mm}\\ \xymatrix{\rond{d''}\ar@{-}[d]_b\\ \rond{d'}\ar@{-}[d]_a\\ \rond{d}}}
+\sum\substack{\hspace{3mm}\\ \xymatrix{\rond{d'}\ar@{-}[d]_{a''\triangleright b''}\\ \rond{d}}}\:\xymatrix{\rond{d''}}
\otimes\substack{\hspace{3mm}\\ \xymatrix{\rond{d''}\ar@{-}[d]_{a'\rightarrow b'}\\ \rond{dd'}}}\\
&+\substack{\hspace{3mm}\\ \xymatrix{\rond{d''}\ar@{-}[d]_b\\ \rond{d'}}}\:\xymatrix{\rond{d}}\otimes 
\substack{\hspace{3mm}\\ \xymatrix{\rond{d'd''}\ar@{-}[d]_a\\ \rond{d}} }
+\substack{\hspace{5mm}\\ \xymatrix{\rond{d''}\ar@{-}[d]_b\\ \rond{d'}\ar@{-}[d]_a\\ \rond{d}}}\otimes 
\xymatrix{\rond{dd'd''}},\\[3mm]
\delta\left(\substack{\hspace{3mm}\\ \xymatrix{\rond{d'}\ar@{-}[dr]_a&\rond{d''}\ar@{-}[d]_b\\ &\rond{d}}}\:\right)
&=\xymatrix{\rond{d}}\:\xymatrix{\rond{d'}}\:\xymatrix{\rond{d''}}\otimes 
\substack{\hspace{3mm}\\ \xymatrix{\rond{d'}\ar@{-}[dr]_a&\rond{d''}\ar@{-}[d]_b\\ &\rond{d}}}
+\substack{\hspace{3mm}\\ \xymatrix{\rond{d'}\ar@{-}[d]_a\\ \rond{d}}}\:\xymatrix{\rond{d''}}\otimes 
\substack{\hspace{3mm}\\ \xymatrix{\rond{d''}\ar@{-}[d]_b\\ \rond{dd'}}}\\
&+\substack{\hspace{3mm}\\ \xymatrix{\rond{d''}\ar@{-}[d]_b\\ \rond{d}}}\:\xymatrix{\rond{d'}}\otimes 
\substack{\hspace{3mm}\\ \xymatrix{\rond{d'}\ar@{-}[d]_a\\ \rond{dd''}} }
+\substack{\hspace{3mm}\\ \xymatrix{\rond{d'}\ar@{-}[dr]_a&\rond{d''}\ar@{-}[d]_b\\ &\rond{d}}}
\otimes\xymatrix{\rond{dd'd''}}.
\end{align*}\end{example}

\begin{prop}
Let $(\calD,\times)$ and $(\calD',\times')$ be two associative and commutative semigroups and $(A,\Phi)$, $(A',\Phi')$ be two finite-dimensional dual $\ell$CEDS such that $B_{\Phi,(\calD,\times)}$ and $B_{\Phi',(\calD',\times')}$ are isomorphic bialgebras. 
Then $(\calD,\times)$ and $(\calD',\times')$ are isomorphic and $A$ and $A'$ have the same dimension.
\end{prop}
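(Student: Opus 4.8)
The plan is to read off both $(\calD,\times)$ and $\dim A$ from intrinsic bialgebra invariants of $B_{\Phi,(\calD,\times)}$---its group-like elements and its spaces of skew-primitives---each of which any bialgebra isomorphism $\theta\colon B_{\Phi,(\calD,\times)}\to B_{\Phi',(\calD',\times')}$ automatically preserves (it respects $m$, $\delta$ and the counit $\epsilon$). First I would determine the group-likes of $B=B_{\Phi,(\calD,\times)}$. Since $\delta(\xymatrix{\rond{d}})=\xymatrix{\rond{d}}\otimes\xymatrix{\rond{d}}$ each one-vertex tree is group-like, and as $\delta$ is an algebra morphism every monomial $\xymatrix{\rond{d_1}}\cdots\xymatrix{\rond{d_n}}$ is group-like as well; I claim these are all. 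Two gradings are available: the formula for $\delta$ shows it maps the span $B^{[n]}$ of $n$-vertex forests into $B^{[n]}\otimes B$ (the factor $F_{\mid I}$ retains all vertices of $F$), and it is homogeneous for the number of edges. Given a group-like $g=\sum_n g_n$, projecting $\delta(g)=g\otimes g$ onto left vertex-degree $n$ gives $\delta(g_n)=g_n\otimes g$, and applying $\epsilon$ yields $g_n=\epsilon(g_n)g$, so $g$ is vertex-homogeneous. Writing then $g=\sum_k g_{(k)}$ by edge number with top part $g_{(N)}$ and projecting $\delta(g)=g\otimes g$ to edge-bidegree $(N,0)$, only the term $I=E(F)$ survives on the left, giving $(\id\otimes\rho)(g_{(N)})=g_{(N)}\otimes g_{(0)}$, where $\rho$ contracts each tree of a forest to a point; for $N\ge1$ the forest $\rho(F)$ has strictly fewer vertices than $F$, contradicting vertex-homogeneity unless $g_{(N)}=0$. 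Hence $g$ has no edges, and among no-edge forests (the monoid algebra of the free commutative monoid $\langle\calD\rangle$) the only group-likes are the monomials. Thus $G(B)\cong\langle\calD\rangle$, and $\theta$ restricts to a monoid isomorphism carrying indecomposables to indecomposables, inducing a bijection $\bar\theta\colon\calD\to\calD'$.

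Next I would recover $\times$. For $d,d'\in\calD$ let $t^{d,d'}_a$ denote the two-vertex tree with root $d$, child $d'$ and edge typed $a\in A$; the coproduct gives $\delta(t^{d,d'}_a)=t^{d,d'}_a\otimes\xymatrix{\rond{d\times d'}}+\xymatrix{\rond{d}}\,\xymatrix{\rond{d'}}\otimes t^{d,d'}_a$, so $t^{d,d'}_a$ is a skew-primitive relative to the group-likes $\xymatrix{\rond{d}}\,\xymatrix{\rond{d'}}$ and $\xymatrix{\rond{d\times d'}}$. For group-likes $g,h$ the space $P_{g,h}=\{x:\delta(x)=x\otimes h+g\otimes x\}$ is intrinsic and always contains the trivial line $\K(g-h)$; I call its other elements genuine. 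Fixing $g=\xymatrix{\rond{d}}\,\xymatrix{\rond{d'}}$, the left vertex-grading again forces a genuine skew-primitive to be supported in vertex-degree $2$, and matching the term whose left factor is $g$ (the totally disconnected $I=\emptyset$ term) forces the two vertex-decorations to be $\{d,d'\}$; the only such trees are $t^{d,d'}_a$ and $t^{d',d}_a$, all of which produce the right-hand group-like $\xymatrix{\rond{d\times d'}}$ (using commutativity of $\times$). Hence $\xymatrix{\rond{d\times d'}}$ is the unique atomic group-like $h$ for which $P_{g,h}$ has a genuine element, so $d\times d'$ is determined intrinsically from $(d,d')$. Transporting through $\theta$ gives $\bar\theta(d\times d')=\bar\theta(d)\times'\bar\theta(d')$, i.e. $\bar\theta$ is a semigroup isomorphism.

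Finally, for the dimensions I would specialize to $d=d'$. Solving $\delta(x)=x\otimes\xymatrix{\rond{d\times d}}+\xymatrix{\rond{d}}^2\otimes x$ with the grading reductions above, the solution space is exactly $\K(\xymatrix{\rond{d}}^2-\xymatrix{\rond{d\times d}})\oplus\mathrm{span}\{t^{d,d}_a:a\in A\}$, so $\dim P_{\xymatrix{\rond{d}}^2,\,\xymatrix{\rond{d\times d}}}=1+\dim A$. As this dimension is an intrinsic invariant and the pair $(\xymatrix{\rond{d}}^2,\xymatrix{\rond{d\times d}})$ is canonically attached to the atom $d$, it is preserved by $\theta$, whence $\dim A=\dim A'$. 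I expect the main obstacle to be the bookkeeping in the two previous claims: ruling out unexpected group-likes and genuine skew-primitives requires tracking both the vertex- and edge-gradings through the extraction--contraction coproduct (the $\Phi$-retyping in $\otimes_\Phi$ is harmless here, as it changes neither the underlying tree shapes nor the contracted decorations that carry $\times$). A genuine caveat is the degenerate case $A=0$, where $B$ reduces to the monoid algebra of $\langle\calD\rangle$ and forgets $\times$ entirely; recovering the semigroup structure therefore really does rely on the existence of a nonzero type, i.e. on $\dim A\ge 1$.
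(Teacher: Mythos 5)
Your proof is correct and follows essentially the same route as the paper's: both classify the group-like elements as the edge-free forests, recover $\times$ from the existence of nontrivial skew-primitives in $P_{g,h}$ for $g$ a product of two atoms and $h$ an atom, and read off $\dim A$ from the dimension of $P_{\xymatrix{\rond{d}}^2,\xymatrix{\rond{d^2}}}$. The only additions are your explicit grading argument ruling out unexpected group-likes (which the paper merely asserts) and the honest caveat about the degenerate case $A=0$, where the statement indeed cannot hold as written.
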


\begin{proof}
We denote by $M$ the monoid of group-like elements of $B=B_{\Phi,(\calD,\times)}$. Its elements are the forests with no edge and vertices decorated by $\calD$ and its product is the disjoint union.
For any forest $F$ in $B$, $F_{\mid\emptyset}$ is an element of $M$ which we denote by $\pi_L(F)$, and $F_{/E(F)}$ is another element of $M$ which we denote by $\pi_R(F)$.
We denote by $B^+$ the subspace of $B$ generated by forests with at least one edge. Then, if $F$ is a forest of $B^+$,
\[\delta(F)-\pi_L(F)\otimes F-F\otimes\pi_R(F)\in B^+\otimes B^+.\]
For any $g,g'\in M$, we denote by $B_{g,g'}$ the subspace of $B$ generated by forests $F$ such that $\pi_L(F)=g$ and $\pi_R(F)=g'$ and we put
\[P_{g,g'}=\{x\in B_{\Phi,(\calD,\times)},\:\delta(x)=g\otimes x+x\otimes g'.\}.\]
We adopt similar notations for $B'=B_{\Phi',(\calD',\times')}$.\\

\textit{First step.} Let $g,g'\in M$. Let us prove that
\[P_{g,g'}=P_{g,g'}\cap B_{g,g'}\oplus\vect(g-g').\]
As $\delta(g-g')=g\otimes g-g'\otimes g'=g\otimes (g-g')+(g-g')\otimes g'$, $\supseteq$ is obvious. Let $x\in P_{g,g'}$. We write it under the form of a span of forests $x=\sum a_FF$. Then
\begin{align*}
\delta(x)&=g\otimes x+x\otimes g+\sum_{F\in B^+} a_F\left(\pi_L(F)\otimes F+F\otimes\pi_R(F)\right)+\sum_{F\in M} a_F F\otimes F+B^+\otimes B^+\\
&=\sum a_F (g\otimes F+F\otimes g').
\end{align*}
Identifying, if $F\in B^+$ such that $\pi_L(F)\neq g$ or $\pi_R(F)\neq g'$, then $a_F=0$. If $F\in M\setminus\{g,g'\}$, then $a_F=0$. Therefore, $P_{g,g'}\subseteq B_{g,g'}\oplus\vect(g,g')$.
If $g\neq g'$, considering the coefficient of $g\otimes g'$, we obtain $0=a_g+a_{g'}$, so $P_{g,g'}\subseteq B_{g,g'}\oplus\vect(g-g')$. If $g=g'$, considering the coefficient of $g\otimes g$, we obtain $2a_g=a_g$, so $a_g=0$. 
Hence, $P_{g,g}\subseteq B_{g,g}=B_{g,g}\oplus\vect(g-g)$. As $g-g'\in P_{g,g'}$, we obtain the announced result.\\
 
\textit{Second step}. Let $\Psi:B\longrightarrow B'$ be a bialgebra isomorphism. Then $\Psi$ sends any group-like element of $B$ on a group-like element of $B'$, so $\Psi_{\mid M}:M\longrightarrow M'$ is a bijection.
Looking at the indecomposable elements of the monoids $M$ and $M'$, we obtain that $\Psi$ sends any tree with only one $\calD$-decorated vertex to a tree with only one $\calD'$-decorated vertex, so $\Psi$ induces a bijection $\psi:\calD\longrightarrow\calD'$.
 Up to an isomorphism, we assume now that $\calD'=\calD'$ and that $\psi$ is the identity of $\calD$.
 
Let us consider three elements $d,d',d''$ of $\calD$. Let us consider the group-like elements $g_L=\xymatrix{\rond{d}}\xymatrix{\rond{d'}}$ and $g_R=\xymatrix{\rond{d''}}$.
Then, by the first step, noticing that $B_{gL,gR}$ is generated by trees with two vertices:
\begin{align*}
P_{g_L,g_R}&=\vect(g_L-g_R)\oplus\vect P_{g_L,g_R}\cap B_{g_L,g_R}\\
&=\vect(g_L-g_R)\oplus\begin{cases}
\vect\left(\substack{\hspace{2mm}\\ \xymatrix{\rond{d}\ar@{-}[d]_a\\ \rond{d'}}},
\substack{\hspace{2mm}\\ \xymatrix{\rond{d'}\ar@{-}[d]_a\\ \rond{d}}}, a\in A\right)\mbox{ if }d''=d\times d',\\
0\mbox{ otherwise}.
\end{cases}\end{align*} 
As $\Psi$ induces an isomorphism between $P_{g_L,g_R}$ and $P_{g_L,g_R}'$, we obtain that for any $(d,d',d'')\in\calD^3$,
\[d''=d\times d'\Longleftrightarrow d''=d'\times'd',\]
so $\Psi$ induces an isomorphism between $(\calD,\times)$ and $(\calD',\times')$.\\
 
Let us fix now an element $d\in\calD$, and let us consider the group-like elements $g_L=\xymatrix{\rond{d}}\xymatrix{\rond{d}}$ and $g_R=\xymatrix{\rond{d^2}}$. Then
\[P_{g_L,g_R}=P_{g_L,g_R}\cap B_{g_L,g_R}
 =\vect\left(\substack{\hspace{2mm}\\ \xymatrix{\rond{d}\ar@{-}[d]_a\\ \rond{d}}},a\in A\right).\]
 As $\Psi$ induces a bijection form $P_{g_L,g_R}$ to $P_{g_L,g_R}'$, it induced a bijection from $A$ to $A'$, so $A$ et $A'$ have the same dimension.\end{proof}

\begin{remark}
We conjecture that there exist two non isomorphic dual $\ell$EAS $(A,\Phi)$ and $(A',\Phi')$, such that $B_{\Phi,(\calD,\times)}$ and $B_{\Phi',(\calD,\times)}$ are isomorphic.
\end{remark}

\subsection{Cointeractions}

From\cite{Foissy55}, if $(A,\Phi)$ is a finite-dimensional dual $\ell$CEDS, then for any operad morphism $\prelie\longrightarrow\prelie_{\Phi^*}$, we obtain a pair of cointeracting bialgebras. 
We shall use the following definition, using the definition of admissible cuts and the notations of\cite{Connes1998}:

\begin{defi}
Let $(A,\Phi)$ be a dual $\ell$CEDS and $\calD$ be a set. Let us fix $f\in A^*$. Let $F$ be an $A$-typed and $\calD$-decorated forest, and let $c$ be an admissible cut of $F$. 
This admissible cuts give a tensor $P^c(F)\otimes R^c(F)$ of $\calD$-decorated forests. We make it a sum of tensors of $A$-typed and $\calD$-decorated forests by the following process:\\

For each edge $e\in c$, let us denote by $e_1,\ldots,e_k$ the edges of $F$ in the path from the root of the tree containing $e$ to the first extremity of $e$; denote by $a$ the type of $e$ and by $a_i$ the type of $e_i$. Compute
\begin{align*}
&\sum_{i=1}^p a'_{1,i}\otimes\ldots\otimes a'_{k,i}=(f\otimes\id^{\otimes k})\circ\\
&\left(\Phi\otimes\id^{\otimes (k-1)}\right)\circ\left(\id\otimes\Phi\otimes\id^{\otimes (k-2)}\right)\circ\ldots\circ\left(\id^{\otimes (k-1)}\otimes\ldots\otimes\Phi\right)(a_1\otimes\ldots\otimes a_k\otimes a).
\end{align*}
and take the sum for $1\leq i\leq p$ of tensors of forests obtained by typing $e_j$ by $a'_{j,i}$ for any $j$ in $P^c(F)\otimes R^c(F)$.\\

This sum of tensor products is denoted by $P^c(F)\otimes_{\Phi,f} R^c(F)$.
\end{defi}

From Proposition\ref{prop2.9}:

\begin{theo}
Let $(A,\Phi)$ be a finite-dimensional dual $\ell$CEDS and let $\calD$ be a set.
Let us consider the symmetric algebra generated by $\g_{A,\calD}$, with its usual product $m$, and let $f\in A^*$. For any $A$-typed and $\calD$-decorated forest $F$, we put
\[\Delta_f(F)=\sum_{c\in\adm(F)} P^c(F)\otimes_{\Phi,f} R^c(F).\]
If $(f\otimes f)\circ\Phi=f\otimes f$, then $(S(\g_{A,\calD}),m,\Delta_f)$ is a Hopf algebra, denoted by $H_{\Phi,\calD,f}$.
Moreover, if $(\calD,\times)$ is a commutative associative semigroup, then $H_{\Phi,\calD,f}$ is a Hopf algebra in the category of left $B_{\Phi,(\calD,\times)}$-comodules, with the coaction $\delta$.
\end{theo}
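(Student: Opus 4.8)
The plan is to deduce the statement from the operadic construction of \cite{FoissyOperads}, applied to the operad $\prelie_{\Phi^*}$, and then to recognize the two abstract coproducts it produces as the explicit combinatorial coproducts $\delta$ and $\Delta_f$. First I would rewrite the hypothesis: the equality $(f\otimes f)\circ\Phi=f\otimes f$ in $A^*\otimes A^*=(A\otimes A)^*$ is exactly $\Phi^*(f\otimes f)=f\otimes f$, i.e. $f$ is a special vector of eigenvalue $1$ of $(A^*,\Phi^*)$. Since $(A,\Phi)$ is a dual $\ell$CEDS, the transposition result shows that $(A^*,\Phi^*)$ is an $\ell$CEDS, so Proposition \ref{prop2.9} applies: the product $\circ_f$ on the free $\Phi^*$-prelie algebra $\g_{A^*,\calD}$ (Theorem \ref{theo2.2}) is a genuine pre-Lie product, equivalently $f$ determines an operad morphism $\iota_f\colon\prelie\longrightarrow\prelie_{\Phi^*}$.

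I would then invoke \cite{FoissyOperads} with $\calP=\prelie_{\Phi^*}$. Through the pairing fixed before the preceding theorem, $S(\g_{A,\calD})$ is the symmetric algebra on the invariants of the dual of $\prelie_{\Phi^*}$, and the coproduct $\delta$ obtained by dualizing the operadic composition is precisely the extraction--contraction coproduct of the bialgebra $B_{\Phi,(\calD,\times)}$ of the preceding theorem. The morphism $\iota_f$ then induces, by dualizing the pre-Lie product $\circ_f$, a second coproduct $\Delta$ on $S(\g_{A,\calD})$; \cite{FoissyOperads} guarantees that $\Delta$ is coassociative, that $(S(\g_{A,\calD}),m,\Delta)$ is a bialgebra, and that it cointeracts with $B_{\Phi,(\calD,\times)}$, i.e. is a bialgebra in the category of left $B_{\Phi,(\calD,\times)}$-comodules with coaction $\delta$. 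What remains is to identify $\Delta$ with $\Delta_f$.

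This identification is the main obstacle. I would transpose the grafting description of Proposition \ref{prop2.6}: the operation ``graft a tree onto a vertex $x$, with new edge typed by the eigenvalue-$1$ vector $f$, and propagate the type change by $\Phi^*$ along the path from the root to $x$'' dualizes into ``cut an edge of an $A$-typed forest and propagate the dual type change by $\Phi$ along the path from the root to the cut''. The eigenvalue-$1$ vector $f$ carried by the grafting edge dualizes to the evaluation of $f$ on the type of the cut edge, which is exactly the factor $f\otimes\id^{\otimes k}$ appearing in the definition of $P^c(F)\otimes_{\Phi,f}R^c(F)$; and the sum over vertices on which one may graft dualizes to the sum over admissible cuts, producing $\Delta_f(F)=\sum_{c\in\adm(F)}P^c(F)\otimes_{\Phi,f}R^c(F)$. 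The delicate point is to transpose the path-dependent type modification while keeping careful track of the order in which the maps $\Phi^*$ (resp. $\Phi$) are composed; should the operadic citation be deemed insufficient here, the same coassociativity and cointeraction identities can be checked directly by the braid-diagram bookkeeping used in the proof of Proposition \ref{prop3.3}.

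Finally I would settle the Hopf property and the comodule assertion. The coproduct $\Delta_f$ is graded by the number of vertices, since each admissible cut distributes the vertices of $F$ between $P^c(F)$ and $R^c(F)$, and its degree-zero part is $\K$ (the empty forest); hence $(S(\g_{A,\calD}),m,\Delta_f)$ is a connected graded bialgebra and therefore admits an antipode, giving the Hopf algebra $H_{\Phi,\calD,f}$. For the last assertion I would note that the coaction $\delta$ is defined exactly when $(\calD,\times)$ is a commutative associative semigroup (contraction of edges merges decorations via $\times$), whereas $\Delta_f$ never merges vertices and so needs no such hypothesis; combining this with the cointeraction obtained above yields that $H_{\Phi,\calD,f}$ is a Hopf algebra in the category of left $B_{\Phi,(\calD,\times)}$-comodules for the coaction $\delta$.
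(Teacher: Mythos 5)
Your proposal is correct and follows essentially the same route as the paper, which states the theorem with only the indication ``From Proposition \ref{prop2.9}'' and relies on the operadic machinery of \cite{FoissyOperads} set up at the start of Section 4: the hypothesis $(f\otimes f)\circ\Phi=f\otimes f$ makes $f$ a special vector of eigenvalue $1$ of the $\ell$CEDS $(A^*,\Phi^*)$, hence an operad morphism $\prelie\longrightarrow\prelie_{\Phi^*}$, and the coproducts $\delta$ and $\Delta_f$ are the dualizations of the operadic composition and of the grafting pre-Lie product of Proposition \ref{prop2.6}. Your write-up actually supplies more detail than the paper does on the identification of the abstract coproduct with the admissible-cut formula and on the graded-connected argument for the antipode.
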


\begin{example}
Let $a,b,c\in A$ and $d,d',d'',d'''\in\calD$.
\begin{align*}
\Delta_f\left(\substack{\hspace{5mm}\\ \xymatrix{\rond{d''}\ar@{-}[rd]_b&\rond{d'''}\ar@{-}[d]_c\\
&\rond{d'}\ar@{-}[d]_a\\
&\rond{d}}}\:\right)
&=1\otimes\hspace{-5mm}\substack{\hspace{5mm}\\ \xymatrix{\rond{d''}\ar@{-}[rd]_b&\rond{d'''}\ar@{-}[d]_c\\
&\rond{d''}\ar@{-}[d]_a\\
&\rond{d'}}}+\substack{\hspace{5mm}\\ \xymatrix{\rond{d''}\ar@{-}[rd]_b&\rond{d'''}\ar@{-}[d]_c\\
&\rond{d'}\ar@{-}[d]_a\\
&\rond{d}}}\otimes 1+f(a)\substack{\hspace{3mm}\\ \xymatrix{\rond{d''}\ar@{-}[rd]_b&\rond{d'''}\ar@{-}[d]_c\\&\rond{d'}}}\otimes\xymatrix{\rond{d}}\\
&+\sum f(a'\rightarrow b')\xymatrix{\rond{d''}}\otimes\substack{\hspace{3mm}\\ \xymatrix{\rond{d'''}\ar@{-}[d]_c\\ 
\rond{d'}\ar@{-}[d]_{a''\triangleright b''}\\ \rond{d}}}+\sum f(a'\rightarrow c')\xymatrix{\rond{d'''}}\otimes\substack{\hspace{5mm}\\ 
\xymatrix{\rond{d''}\ar@{-}[d]_b\\ \rond{d'}\ar@{-}[d]_{a''\triangleright c''}\\ \rond{d}}}\\
&+\sum\sum f((a''\blacktriangleright b'')'\rightarrow c') f(a'\rightarrow b')\xymatrix{\rond{d''}}\:\xymatrix{\rond{d'''}}\otimes
\substack{\hspace{3mm}\\ \xymatrix{\rond{d'}\ar@{-}[d]^{(a''\triangleright b'')''\triangleright c''}\\ \rond{d}}}.
\end{align*}\end{example}

Dualizing Proposition\ref{prop2.10}:

\begin{prop}
Let $(A,\Phi)$ be a finite-dimensional dual $\ell$CEDS, $\calD$ be a set and let $f\in A^*$ such that $(f\otimes f)\circ\Phi=f\otimes f$.
 We denote by $\g_{A,\calD}$ be subspace of $\g_{A,\calD}$ generated by $\calD$-decorated and $A$-typed trees $T=B_d(a_1T_1,\ldots,a_kT_k)$ such that for any $i$, $f(a_i)=0$. 
Then the Hopf algebra $H_{\Phi,\calD,f}$ is isomorphic to the Connes-Kreimer Hopf algebra of non-typed, $\g_{A,\calD}'$-decorated rooted trees with its usual product of admissible cuts. 
\end{prop}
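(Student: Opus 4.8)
The plan is to deduce the statement from Proposition \ref{prop2.10} by dualization, using the operadic construction of \cite{FoissyOperads} that produces $\Delta_f$. First I would transport everything to the linear CEDS side. Since $(A,\Phi)$ is a dual $\ell$CEDS, $(A^*,\Phi^*)$ is an $\ell$CEDS, and the hypothesis $(f\otimes f)\circ\Phi=f\otimes f$ says exactly that $\Phi^*(f\otimes f)=f\otimes f$, i.e.\ that $f\in A^*$ is a special vector of $(A^*,\Phi^*)$ of eigenvalue $1$. By Proposition \ref{prop2.9} the grafting product $\circ_f$ then makes $\g_{A^*,\calD}$ a genuine pre-Lie algebra. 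By the construction of \cite{FoissyOperads}, and the admissible-cut description of the Connes--Kreimer coproduct as dual to grafting in \cite{CalaqueEbrahimiFard,ChapotonLivernet}, the coproduct $\Delta_f$ on $S(\g_{A,\calD})$ is precisely the transpose, under the pairing introduced at the beginning of this section, of the enveloping (Grossman--Larson) product of $(\g_{A^*,\calD},\circ_f)$; equivalently, $H_{\Phi,\calD,f}$ is the graded dual of the enveloping Hopf algebra of the pre-Lie algebra $(\g_{A^*,\calD},\circ_f)$.

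Next I would apply Proposition \ref{prop2.10} to the $\ell$CEDS $(A^*,\Phi^*)$, to the special vector $f$, and to an element $\tilde f\in A=(A^*)^*$ chosen with $\tilde f(f)=1$ (finite-dimensionality of $A$ is used here). It yields that $(\g_{A^*,\calD},\circ_f)$ is \emph{free} as a pre-Lie algebra, freely generated by the subspace $G^*$ of $A^*$-typed trees all of whose edges issued from the root are typed by elements of $\ker(\tilde f)\subseteq A^*$. By functoriality of the enveloping functor, the pre-Lie isomorphism of Proposition \ref{prop2.10} lifts to a Hopf isomorphism of enveloping algebras, and transposing it produces a Hopf isomorphism from $H_{\Phi,\calD,f}$ onto the Connes--Kreimer Hopf algebra, with its admissible-cut coproduct, built on non-typed rooted trees decorated by the graded dual of $G^*$.

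It then remains to identify this dual decoration space with $\g'_{A,\calD}$, and this is the step I expect to carry the real content. Under the pairing of this section one pairs an $A$-typed tree with an $A^*$-typed tree of the same shape by multiplying the scalars $f_e(a_{\sigma(e)})$ over the edges: on the non-root edges this is the perfect $A$--$A^*$ pairing, while on the edges issued from the root one is reduced to pairing $\ker(f)\subseteq A$ against $\ker(\tilde f)\subseteq A^*$. This restricted pairing is perfect, because the annihilator of $\ker(\tilde f)$ inside $A$ is the line $\K\tilde f$, and $\tilde f\notin\ker(f)$ since $f(\tilde f)=\tilde f(f)=1\neq 0$. Hence $\g'_{A,\calD}$ pairs perfectly, shape by shape, with $G^*$, so it is exactly the graded dual of $G^*$, and the Hopf isomorphism above becomes the desired isomorphism between $H_{\Phi,\calD,f}$ and the Connes--Kreimer Hopf algebra of $\g'_{A,\calD}$-decorated rooted trees.

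The main obstacle is precisely this last bookkeeping: matching the generating space produced by Proposition \ref{prop2.10} on the $A^*$-typed side with the decoration space $\g'_{A,\calD}$ on the $A$-typed side, and verifying that the two root-edge conditions, governed by $\ker(\tilde f)$ and $\ker(f)$ respectively, are dual to one another under the pairing. Once the perfectness on root edges is settled by the computation $f(\tilde f)=1$, the rest of the argument is a routine transcription of the free pre-Lie / Connes--Kreimer duality of \cite{ChapotonLivernet,FoissyOperads}.
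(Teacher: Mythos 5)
Your proof is correct and follows exactly the route the paper intends: the paper's entire justification is the phrase ``Dualizing Proposition \ref{prop2.10}'', and you carry out precisely that dualization, identifying $f$ as a special vector of eigenvalue $1$ of the $\ell$CEDS $(A^*,\Phi^*)$, applying Proposition \ref{prop2.10} there with an auxiliary $\tilde f\in A$ satisfying $\tilde f(f)=1$, and transposing the resulting freeness statement through the Grossman--Larson/Connes--Kreimer duality. Your verification that $\ker(f)\subseteq A$ and $\ker(\tilde f)\subseteq A^*$ pair perfectly (via $f(\tilde f)=1$) is the one detail the paper leaves implicit, and you handle it correctly.
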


\begin{remark}
The coproduct $\Delta_0$ is coassociative and sends any tree $T$ to $T\otimes 1+1\otimes T$. 
\end{remark}

\subsection{Examples}

\begin{example}
Let $\Omega$ be a finite set and $\Omega'\subseteq\Omega$. We consider $\K\Omega$ and $\Phi$ defined by
\begin{align*}
&\forall\alpha,\beta\in\Omega,&\Phi(\alpha\otimes\beta)=\begin{cases}
\beta\otimes\alpha\mbox{ if }\beta\in\Omega',\\
0\mbox{ otherwise.}
\end{cases}\end{align*}
A direct computation shows that $(A,\Phi)$ is a dual $\ell$CEDS. In particular, if $\Omega'=\Omega$, we recover the linearization of $\eas(\Omega)$. 
Hence, for any $\alpha,\alpha_1,\ldots,\alpha_k\in\Omega$, with the notations of Definition\ref{defi4.1}, we obtain
\begin{align*}
&\sum_{i=1}^p\alpha'_i\otimes\alpha'_{1,i}\otimes\ldots\otimes\alpha'_{k,i}\\
&=\left(\Phi\otimes\id^{\otimes (k-1)}\right)\circ\left(\id\otimes\Phi\otimes\id^{\otimes (k-2)}\right)\circ\ldots\circ\left(\id^{\otimes (k-1)}\otimes\ldots\otimes\Phi\right)(\alpha_1\otimes\ldots\otimes\alpha_k\otimes\alpha)\\
&=\begin{cases}
\alpha\otimes\alpha_1\otimes\ldots\otimes\alpha_k\mbox{ if }\alpha\in\Omega'\mbox{or $k=0$},\\
0\mbox{ otherwise}.
\end{cases}\end{align*}
Consequently:

1. In $B_{\Phi,(\Omega,\times)}$, we obtain
\[\delta(F)=\sum_{\substack{I\subseteq E(F),\\t((E(F)\setminus I)')\subseteq\Omega'}} F_{\mid I}\otimes F_{/I},\]
where $(E(F)\setminus I)'$ is the set of edges $e$ in $E(F)\setminus I$ such that at least one edge in the path between a root of $F$ and the first extremity of $e$ is in $I$. For example,
\[\delta\left(\substack{\hspace{5mm}\\ \xymatrix{\rond{d''}\ar@{-}[d]_\beta\\ \rond{d'}\ar@{-}[d]_\alpha\\ \rond{d}}}
\:\right)=\xymatrix{\rond{d}}\:\xymatrix{\rond{d'}}\:\xymatrix{\rond{d''}}\otimes 
\substack{\hspace{5mm}\\ \xymatrix{\rond{d''}\ar@{-}[d]_\beta\\ \rond{d'}\ar@{-}[d]_\alpha\\ \rond{d}}}
+\indic_{\Omega'}(\beta)
\substack{\hspace{3mm}\\ \xymatrix{\rond{d'}\ar@{-}[d]_\alpha\\ \rond{d}}}\:\xymatrix{\rond{d''}}\otimes 
\substack{\hspace{3mm}\\ \xymatrix{\rond{d''}\ar@{-}[d]_\beta\\ \rond{dd'}} }
+\substack{\hspace{3mm}\\ \xymatrix{\rond{d''}\ar@{-}[d]_\beta\\ \rond{d'}}}\:
\xymatrix{\rond{d}}\otimes 
\substack{\hspace{3mm}\\ \xymatrix{\rond{d'd''}\ar@{-}[d]_\alpha\\ \rond{d}} }
+\substack{\hspace{5mm}\\ \xymatrix{\rond{d''}\ar@{-}[d]_\beta\\ \rond{d'}\ar@{-}[d]_\alpha\\ \rond{d}}}\otimes 
\xymatrix{\rond{dd'd''}},\]
where $\indic_{\Omega'}$ is the characteristic function of $\Omega'$.\\

2. Let $(\lambda_\alpha)_{\alpha\in\Omega}$ be any family of scalars such that $\lambda_\alpha=0$ if $\alpha\notin\Omega'$. If $f:\K\Omega\longrightarrow\K$ is defined by $f(\alpha)=\lambda_\alpha$ for any $\alpha\in\Omega$, then $(f\otimes f)\circ\Phi=f\otimes f$. 
For any forest $F$ and for any admissible cut $c$ of $F$, $P^c(F)$ and $R^c(F)$ are obtained by deleting the edges of $c$, the types of the remaining edges being untouched. Then
\[\Delta_f(F)=\sum_{c\in\mathrm{Adm}(F)}\prod_{e\in c}\lambda_{\mathrm{type}(e)} P^c(F)\otimes R^c(F),\]
For example,
\begin{align*}
\Delta_f\left(\substack{\hspace{5mm}\\ \xymatrix{\rond{d''}\ar@{-}[rd]_\beta&\rond{d'''}\ar@{-}[d]_\gamma\\
&\rond{d'}\ar@{-}[d]_\alpha\\
&\rond{d}}}\:\right)
&=1\otimes\hspace{-5mm}\substack{\hspace{5mm}\\ \xymatrix{\rond{d''}\ar@{-}[rd]_\beta&\rond{d'''}\ar@{-}[d]_\gamma\\
&\rond{d'}\ar@{-}[d]_\alpha\\
&\rond{d}}}+\substack{\hspace{5mm}\\ \xymatrix{\rond{d''}\ar@{-}[rd]_\beta&\rond{d'''}\ar@{-}[d]_\gamma\\
&\rond{d'}\ar@{-}[d]_\alpha\\
&\rond{d}}}\otimes 1+\lambda_\alpha
\substack{\hspace{3mm}\\ \xymatrix{\rond{d''}\ar@{-}[rd]_\beta&\rond{d'''}\ar@{-}[d]_\gamma\\&\rond{d'}}}
\otimes\xymatrix{\rond{d}}\\
&+\lambda_\beta
\xymatrix{\rond{d''}}\otimes\substack{\hspace{5mm}\\ \xymatrix{\rond{d'''}\ar@{-}[d]_\gamma\\ 
\rond{d'}\ar@{-}[d]_\alpha\\ \rond{d}}}+\lambda_\gamma
\xymatrix{\rond{d'''}}\otimes\substack{\hspace{5mm}\\
\xymatrix{\rond{d''}\ar@{-}[d]_\gamma\\ \rond{d'}\ar@{-}[d]_\alpha\\ \rond{d}}}
+\lambda_\beta\lambda_\gamma\xymatrix{\rond{d''}}\:\xymatrix{\rond{d'''}}\otimes
\substack{\hspace{3mm}\\ \xymatrix{\rond{d'}\ar@{-}[d]^\alpha\\ \rond{d}}}.
\end{align*}\end{example}

\begin{example} Let $(\Omega,\star)$ be a finite associative semigroup such that for any $\alpha,\beta,\gamma\in\Omega$,
\[(\alpha\star\beta)\star\gamma=(\beta\star\alpha)\star\gamma.\]
We shall take $A=\eas(\Omega,\star)^*$, which we identify as a vector space $A=\K\Omega$. For any $\alpha,\beta\in\Omega$, $\Phi^*(\alpha\otimes\beta)=\alpha\star\beta\otimes\alpha$, so
\[\Phi(\alpha\otimes\beta)=\sum_{\gamma\in\Omega,\:\beta\star\gamma=\alpha}\beta\otimes\gamma.\]
Hence, for any $\alpha,\alpha_1,\ldots,\alpha_k\in\Omega$, with the notations of Definition\ref{defi4.1}, we obtain
\begin{align*}
&\sum_{i=1}^p\alpha'_i\otimes\alpha'_{1,i}\otimes\ldots\otimes\alpha'_{k,i}\\
&=\left(\Phi\otimes\id^{\otimes (k-1)}\right)\circ\left(\id\otimes\Phi\otimes\id^{\otimes (k-2)}\right)\circ\ldots\circ\left(\id^{\otimes (k-1)}\otimes\ldots\otimes\Phi\right)(\alpha_1\otimes\ldots\otimes\alpha_k\otimes\alpha)\\
&=\sum_{i=1}^k\sum_{\gamma_i\in\Omega,\:\alpha_i\star\gamma_i=\alpha}\alpha\otimes\gamma_1\otimes\ldots\otimes\gamma_k.
\end{align*}
Consequently:

1. In $B_{\Phi,(\calD,\times)}$, if $F$ is an $\Omega$-typed and $\calD$-decorated forest and $I\subseteq E(F)$, 
$F_{\mid I,\star}$ is obtained by deleting all the edges which does not belong to $I$, the type of the remaining edges being modified in the following way:
if $e\in I$ and $e_1,\ldots,e_k$ begin the edges between a root of $F$ and $e$ which do not belong to $I$, $\alpha$ the type of $e$ and $\alpha_i$ the type of $e_i$ for any $i$, then the type of $e$ is replaced by
\[\sum_{\substack{\gamma\in\Omega,\\ \alpha_1\star\ldots\star\alpha_k\star\gamma=\alpha}}\gamma.\]
Moreover, $F_{/I}$ is obtained by contracting all the edges which belong to $I$, with the types of the remaining edges being untouched. We obtain
\[\delta(F)=\sum_{I\subseteq E(F)} F_{\mid I,\star}\otimes F_{/I}.\]
For example,
\begin{align*}
\delta\left(\substack{\hspace{5mm}\\ \xymatrix{\rond{d''}\ar@{-}[d]_\beta\\ \rond{d'}\ar@{-}[d]_\alpha\\ \rond{d}}}\:\right)
&=\xymatrix{\rond{d}}\:\xymatrix{\rond{d'}}\:\xymatrix{\rond{d''}}\otimes\substack{\hspace{5mm}\\ \xymatrix{\rond{d''}\ar@{-}[d]_\beta\\ \rond{d'}\ar@{-}[d]_\alpha\\ \rond{d}}}+\sum_{\alpha'\in\Omega,\:\beta\star\alpha'=\alpha} 
\substack{\hspace{3mm}\\ \xymatrix{\rond{d'}\ar@{-}[d]_{\alpha'}\\ \rond{d}}}\:
\xymatrix{\rond{d''}}\otimes 
\substack{\hspace{3mm}\\ \xymatrix{\rond{d''}\ar@{-}[d]_\beta\\ \rond{dd'}}}\\
&+\substack{\hspace{3mm}\\ \xymatrix{\rond{d''}\ar@{-}[d]_\beta\\ \rond{d'}}}\:\xymatrix{\rond{d}}\otimes 
\substack{\hspace{3mm}\\ \xymatrix{\rond{d'd''}\ar@{-}[d]_\alpha\\ \rond{d}} }
+\substack{\hspace{5mm}\\ \xymatrix{\rond{d''}\ar@{-}[d]_\beta\\ \rond{d'}\ar@{-}[d]_\alpha\\ \rond{d}}}\otimes 
\xymatrix{\rond{dd'd''}}.
\end{align*}
In the particular case where $\Omega$ is a commutative group,
\[\delta\left(\substack{\hspace{5mm}\\ \xymatrix{\rond{d''}\ar@{-}[d]_\beta\\ \rond{d'}\ar@{-}[d]_\alpha\\ \rond{d}}}\:\right)=\xymatrix{\rond{d}}\:\xymatrix{\rond{d'}}\:\xymatrix{\rond{d''}}\otimes 
\substack{\hspace{5mm}\\ \xymatrix{\rond{d''}\ar@{-}[d]_\beta\\ \rond{d'}\ar@{-}[d]_\alpha\\ \rond{d}}}
+\substack{\hspace{3mm}\\ \xymatrix{\rond{d'}\ar@{-}[d]_{\beta^{\star -1}\star\alpha}\\ \rond{d}}}\:\xymatrix{\rond{d''}}\otimes 
\substack{\hspace{3mm}\\ \xymatrix{\rond{d''}\ar@{-}[d]_\beta\\ \rond{dd'}} }+\substack{\hspace{3mm}\\ \xymatrix{\rond{d''}\ar@{-}[d]_\beta\\ \rond{d'}}}\:\xymatrix{\rond{d}}\otimes 
\substack{\hspace{3mm}\\ \xymatrix{\rond{d'd''}\ar@{-}[d]_\alpha\\ \rond{d}} }+\substack{\hspace{5mm}\\ \xymatrix{\rond{d''}\ar@{-}[d]_\beta\\ \rond{d'}\ar@{-}[d]_\alpha\\ \rond{d}}}\otimes 
\xymatrix{\rond{dd'd''}}.\]

2. Let $a$ be a special vector of eigenvalue 1 of $\eas(\Omega,\star)$ (see Example\ref{ex2.5}), written under the form 
\[a=\sum_{\alpha\in\Omega}\lambda_\alpha\alpha\in\K\Omega.\]
For any forest $F$ and for any admissible cut $c$ of $F$, $P^c(F)$ and $R_\star^c(F)$ are obtained by deleting the edges of $c$, the types of the remaining edges of $P^c(F)$ being untouched, 
whereas the types of the remaining edges of $R^c(F)$ are modified in the following way:
if $e$ is such an edge, $\alpha$ its type, $e_1,\ldots,e_k$ the edges $f$ of $F$ such that there exists a path from the last extremity of $e$ to the first extremity of $f$, $\alpha_1,\ldots,\alpha_k$ their types, then the type of $e$ is replaced by
\[\sum_{\substack{\gamma\in\Omega,\:\alpha_1\star\ldots\star\alpha_k\star\gamma=\alpha}}\gamma.\] 
Then
\[\Delta_a(F)=\sum_{c\in\mathrm{Adm}(F)}\prod_{e\in c}\lambda_{\mathrm{type}(e)} P^c(F)\otimes R_\star^c(F).\]
For example,
\begin{align*}
\Delta_a\left(\substack{\hspace{5mm}\\ \xymatrix{\rond{d''}\ar@{-}[rd]_\beta&\rond{d'''}\ar@{-}[d]_\gamma\\
&\rond{d''}\ar@{-}[d]_\alpha\\
&\rond{d}}}\:\right)
&=1\otimes\hspace{-5mm}\substack{\hspace{5mm}\\ \xymatrix{\rond{d''}\ar@{-}[rd]_\beta&\rond{d'''}\ar@{-}[d]_\gamma\\
&\rond{d''}\ar@{-}[d]_\alpha\\
&\rond{d}}}+\substack{\hspace{5mm}\\ \xymatrix{\rond{d''}\ar@{-}[rd]_\beta&\rond{d'''}\ar@{-}[d]_\gamma\\
&\rond{d'}\ar@{-}[d]\\
&\rond{d}}}\otimes 1+\lambda_\alpha
\substack{\hspace{3mm}\\ \xymatrix{\rond{d''}\ar@{-}[rd]_\beta&\rond{d'''}\ar@{-}[d]_\gamma\\&\rond{d'}}}\otimes\xymatrix{\rond{d}}\\
&+\sum_{\alpha'\in\Omega,\:\beta\star\alpha'=\alpha}\lambda_\beta\:\xymatrix{\rond{d''}}\otimes\substack{\hspace{5mm}\\
\xymatrix{\rond{d'''}\ar@{-}[d]_\gamma\\ \rond{d'}\ar@{-}[d]_{\alpha'}\\ \rond{d}}}
+\sum_{\alpha'\in\Omega,\:\gamma\star\alpha'=\alpha}\lambda_\gamma\:\xymatrix{\rond{d'''}}\otimes 
\substack{\hspace{5mm}\\ \xymatrix{\rond{d''}\ar@{-}[d]_\gamma\\ \rond{d'}\ar@{-}[d]_{\alpha'}\\ \rond{d}}}\\
&+\sum_{\alpha'\in\Omega,\:\gamma\star\beta\star\alpha'=\alpha}\lambda_\beta\lambda_\gamma\:
\xymatrix{\rond{d''}}\:\xymatrix{\rond{d'''}}\otimes\substack{\hspace{3mm}\\ \xymatrix{\rond{d'}\ar@{-}[d]^{\alpha'}\\ \rond{d}}}.
\end{align*}
In the special case where $\Omega$ is a commutative group, let us choose a subgroup $H$ of $\Omega$. We denote by $\indic_H$ the characteristic function of $H$. For any $\alpha\in\Omega$, we put $\lambda_\alpha=\indic_H(\alpha)$.
From\cite[Proposition 4.12]{Foissy46}, this defines a special vector of eigenvalue 1. Then
\begin{align*}
\Delta_a\left(\substack{\hspace{5mm}\\ \xymatrix{\rond{d''}\ar@{-}[rd]_\beta&\rond{d'''}\ar@{-}[d]_\gamma\\
&\rond{d'}\ar@{-}[d]_\alpha\\
&\rond{d}}}\:\right)
&=1\otimes\hspace{-5mm}\substack{\hspace{5mm}\\ \xymatrix{\rond{d''}\ar@{-}[rd]_\beta&\rond{d'''}\ar@{-}[d]_\gamma\\
&\rond{d'}\ar@{-}[d]_\alpha\\
&\rond{d}}}+\substack{\hspace{5mm}\\ \xymatrix{\rond{d''}\ar@{-}[rd]_\beta&\rond{d'''}\ar@{-}[d]_\gamma\\
&\rond{d'}\ar@{-}[d]\\
&\rond{d}}}\otimes 1+\indic_H(\alpha)\substack{\hspace{3mm}\\ \xymatrix{\rond{d''}\ar@{-}[rd]_\beta&\rond{d'''}\ar@{-}[d]_\gamma\\&\rond{d'}}}\otimes\xymatrix{\rond{d}}\\
&+\indic_H(\beta)\:\xymatrix{\rond{d''}}\otimes\hspace{-5mm}\substack{\hspace{5mm}\\ \xymatrix{\rond{d'''}\ar@{-}[d]_\gamma\\ 
\rond{d'}\ar@{-}[d]_{\beta^{\star-1}\star\alpha}\\ \rond{d}}}
+\indic_H(\gamma)\:\xymatrix{\rond{d'''}}\otimes\hspace{-5mm}\substack{\hspace{5mm}\\
\xymatrix{\rond{d''}\ar@{-}[d]_\gamma\\ \rond{d'}\ar@{-}[d]_{\gamma^{\star-1}\star\alpha}\\ \rond{d}}}\\
&+\indic_H(\beta)\indic_H(\gamma)\:\xymatrix{\rond{d''}}\:\xymatrix{\rond{d'''}}\otimes
\substack{\hspace{3mm}\\ \xymatrix{\rond{d'}\ar@{-}[d]^{\beta^{\star-1}\star\gamma^{\star-1}\star\alpha}\\ \rond{d}}}.
\end{align*}\end{example}

\begin{example} Let $(\Omega,\star)$ be a finite associative semigroup with the right inverse condition, and let us consider $(A,\Phi)=\K\eas'(\Omega,\star)^*$, identified with $\K\Omega$ as a vector space. For any $\alpha,\beta\in\Omega$,
\begin{align*}
\Phi(\alpha\otimes\beta)&=\beta\star\alpha\otimes\alpha.
\end{align*}
Hence, for any $\alpha,\alpha_1,\ldots,\alpha_k\in\Omega$, with the notations of Definition\ref{defi4.1},
we obtain
\begin{align*}
&\sum_{i=1}^p\alpha'_i\otimes\alpha'_{1,i}\otimes\ldots\otimes\alpha'_{k,i}\\
&=(\Phi\otimes\id^{\otimes (k-1)})\circ (\id\otimes\Phi\otimes\id^{\otimes (k-2)})\circ\ldots\circ (\id^{\otimes (k-1)}\otimes\ldots\otimes\Phi)(\alpha_1\otimes\ldots\otimes\alpha_k\otimes\alpha)\\
&=\alpha\star\alpha_k\star\ldots\star\alpha_1\otimes\alpha_1\otimes\ldots\otimes\alpha_k.
\end{align*}
Consequently:

1. In $B_{\Phi,(\calD,\times)}$, if $F$ is an $\Omega$-typed and $\calD$-decorated forest and $I\subseteq E(F)$, $F_{\mid I}$ is obtained by deleting all the edges which does not belong to $I$, the type of the remaining edges being untouched.
Moreover, $F_{/I,\star}$ is obtained by contracting all the edges which belong to $I$, with the types of the remaining edges being modified in the following way:
if $e\in I$ and $e_1,\ldots,e_k$ being the edges between the root of the tree of $F$ containing $e$ and $e$, $\alpha$ the type of $e$ and $\alpha_i$ the type of $e_i$ for any $i$, then the type of $e$ is replaced by
\[\alpha\star\alpha_k\star\ldots\star\alpha_1.\]
We obtain
\[\delta(F)=\sum_{I\subseteq E(F)} F_{\mid I}\otimes F_{/I,\star}.\]
For example,
\[\delta\left(\substack{\hspace{5mm}\\ \xymatrix{\rond{d''}\ar@{-}[d]_\beta\\ \rond{d'}\ar@{-}[d]_\alpha\\ \rond{d}}}\:\right)=\xymatrix{\rond{d}}\:\xymatrix{\rond{d'}}\:\xymatrix{\rond{d''}}\otimes 
\substack{\hspace{5mm}\\ \xymatrix{\rond{d''}\ar@{-}[d]_\beta\\ \rond{d'}\ar@{-}[d]_\alpha\\ \rond{d}}}+\substack{\hspace{3mm}\\ \xymatrix{\rond{d'}\ar@{-}[d]_\alpha\\ \rond{d}}}\:\xymatrix{\rond{d''}}\otimes 
\substack{\hspace{3mm}\\ \xymatrix{\rond{d''}\ar@{-}[d]_{\beta\star\alpha}\\ \rond{dd'}} }+\substack{\hspace{3mm}\\ \xymatrix{\rond{d''}\ar@{-}[d]_\beta\\ \rond{d'}}}\:\xymatrix{\rond{d}}\otimes 
\substack{\hspace{3mm}\\ \xymatrix{\rond{d'd''}\ar@{-}[d]_\alpha\\ \rond{d}} }+\substack{\hspace{5mm}\\ \xymatrix{\rond{d''}\ar@{-}[d]_\beta\\ \rond{d'}\ar@{-}[d]_\alpha\\ \rond{d}}}\otimes\xymatrix{\rond{dd'd''}}.\]

2. Let $a$ be a special vector of eigenvalue 1 of $A$ (see Example\ref{ex2.5}), written under the form
\[a=\sum_{\alpha\in\Omega}\lambda_\alpha\alpha\in\K\Omega.\]
For any forest $F$ and for any admissible cut $c$ of $F$, $P^c(F)$ and $R^c(F)$ are obtained by deleting the edges of $c$, the types of the remaining edges of $P^c(F)$ and $R^c(F)$ being untouched.
For any cut edge $e\in c$, let us denote by $\alpha_e$ its type, by $e_1,\ldots,e_k$ in the path from the root of the tree of $F$ containing $e$ to the first extremity of $e$ and $\alpha_1,\ldots,\alpha_k$ their types. We put
\[\alpha'_e=\alpha_e\star\alpha_k\star\ldots\star\alpha_1.\]
Then
\[\Delta_a(F)=\sum_{c\in\mathrm{Adm}(F)}\prod_{e\in c}\lambda_{\alpha'_e} P^c(F)\otimes R^c(F).\]
For example,
\begin{align*}
\Delta_a\left(\substack{\hspace{5mm}\\ \xymatrix{\rond{d''}\ar@{-}[rd]_\beta&\rond{d'''}\ar@{-}[d]_\gamma\\
&\rond{d'}\ar@{-}[d]_\alpha\\
&\rond{d}}}\:\right)
&=1\otimes\hspace{-5mm}\substack{\hspace{5mm}\\ \xymatrix{\rond{d''}\ar@{-}[rd]_\beta&\rond{d'''}\ar@{-}[d]_\gamma\\
&\rond{d'}\ar@{-}[d]_\alpha\\
&\rond{d}}}+\substack{\hspace{5mm}\\ \xymatrix{\rond{d''}\ar@{-}[rd]_\beta&\rond{d'''}\ar@{-}[d]_\gamma\\
&\rond{d'}\ar@{-}[d]_\alpha\\
&\rond{d}}}\otimes 1\\
&+\lambda_\alpha\substack{\hspace{3mm}\\ \xymatrix{\rond{d''}\ar@{-}[rd]_\beta&\rond{d'''}\ar@{-}[d]_\gamma\\&\rond{d'}}}\otimes\xymatrix{\rond{d}}
+\lambda_{\beta\star\alpha}\:\xymatrix{\rond{d''}}\otimes\substack{\hspace{5mm}\\ \xymatrix{\rond{d'''}\ar@{-}[d]_\gamma\\ \rond{d'}\ar@{-}[d]_\alpha\\ \rond{d}}}\\
&+\lambda_{\gamma\star\alpha}\:\xymatrix{\rond{d'''}}\otimes\substack{\hspace{5mm}\\ \xymatrix{\rond{d''}\ar@{-}[d]_\gamma\\ \rond{d'}\ar@{-}[d]_\alpha\\ \rond{d}}}
+\lambda_{\beta\star\alpha}\lambda_{\gamma\star\alpha}\:\xymatrix{\rond{d''}}\:\xymatrix{\rond{d'''}}\otimes\substack{\hspace{3mm}\\ \xymatrix{\rond{d'}\ar@{-}[d]^\alpha\\ \rond{d}}}.
\end{align*}
In the special case where $\Omega$ is a commutative group, let us choose a subgroup $H$ of $\Omega$. We denote by $\indic_H$ the characteristic function of $H$.
For any $\alpha\in\Omega$, we put $\lambda_\alpha=\indic_H(\alpha)$.From\cite[Proposition 4.12]{Foissy46}, this defines a special vector of eigenvalue 1, and therefore a pair of bialgebras in cointeraction.
\end{example}

\section*{Declarations}

\emph{Ethical Approval}. Not applicable.\\

\noindent\emph{Funding}. 
The author acknowledges support from the grant ANR-20-CE40-0007
\emph{Combinatoire Algébrique, Résurgence, Probabilités Libres et Opérades}.\\
 
\noindent\emph{Availability of data and materials}. Not applicable.

\bibliographystyle{amsplain}
\addcontentsline{toc}{section}{References}
\bibliography{biblio}

\end{document}